\newtheorem{thm}{Theorem}[section]
\newtheorem{claim}[thm]{Claim}
\newtheorem{fact}[thm]{Fact}
\newtheorem{lemma}[thm]{Lemma}
\newtheorem{prop}[thm]{Proposition}
\theoremstyle{definition}
\newtheorem{definition}[thm]{Definition}
\newtheorem{ex}[thm]{Example}
\newtheorem{remark}[thm]{Remark}
\newtheorem{question}[thm]{Question}
\newtheorem{problem}[thm]{Problem}
\title{Negative curvature in graph braid groups}
\date{\today}
\author{Anthony Genevois}
\begin{document}

\maketitle

\begin{abstract}
In this article, we initiate a geometric study of graph braid groups. More precisely, by applying the formalism of special colorings introduced in a previous article, we determine precisely when a graph braid group is Gromov-hyperbolic, toral relatively hyperbolic, and acylindrically hyperbolic. 
\end{abstract}

\tableofcontents

\section{Introduction}

Given a topological space $X$ and an integer $n \geq 1$, the \emph{configuration space of $n$ points in $X$} is 
$$C_n(X)= \{ (x_1, \ldots, x_n) \in X^n \mid \text{for every $i \neq j$, $x_i \neq x_j$} \}.$$
So a point in $C_n(X)$ is the data of $n$ ordered and pairwise distinct points in $X$. The corresponding configuration space of unordered collections of points is the quotient $UC_n(X)= C_n(X) / \mathfrak{S}_n$ where the symmetric group $\mathfrak{S}_n$ acts by permuting the coordinates. Given an initial configuration $\ast \in UC_n(X)$, the \emph{braid group} $B_n(X,\ast)$ is the fundamental group of $UC_n(X)$ based at $\ast$. Basically, it is the group of trajectories of $n$ points in $X$, starting and ending at $\ast$ (not necessarily in the same order), up to isotopy. Most of the time, if $X$ is ``sufficiently connected'', the braid group does not depend on the basepoint $\ast$ (up to isomorphism), and by abuse of notation we denote by $B_n(X)$ the braid group. 

The most famous braid groups are the braid groups over a disk, introduced and studied by Artin. We refer to the survey \cite{BraidsSurvey} and references therein for more information on these groups and their links with other areas of mathematics. It is worth noticing that braid groups over $n$-dimensional manifolds are trivial when $n \geq 3$. Consequently, it is natural to focus on one- and two-dimensional spaces, justifying the interest in \emph{graph braid groups} and \emph{surface braid groups}. In this article, we are interested in graph braid groups and we refer to the survey \cite{SurfaceBraidsSurvey} for more information on surface braid groups. 

Graph braid groups seem to have been introduced for the first time in group theory in \cite{GraphBraidGroups}. (Although topology of configuration spaces on graphs were studied before.) So far, the main problems which have been considered are: the links between graph braid groups and right-angled Artin groups \cite{CrispWiest, ConnollyDoig, KimKoPark, EmbeddingRAAGgraphbraidgroup, FarleySabalka}; computing presentations of graph braid groups  \cite{PresentationsGraphBraidGroups, Kurlin}; (co)homological properties of graph braid groups \cite{KoPark, KimKoPark, FarleySabalka}. However, their geometry remains essentially unknown. Our goal is to initiate a geometric study of graph braid groups by applying the formalism introduced in \cite{SpecialRH} in order to investigate their negatively-curved properties. More precisely, we are interested in Gromov-hyperbolicity, relative hyperbolicity (see \cite{OsinRelativeHyp, MR2684983} for more information), and acylindrical hyperbolicity (see \cite{OsinAcyl} for more information).

First of all, we are able to determine precisely when a graph braid group is hyperbolic (as defined by Gromov). More precisely, we obtain the following characterisation:

\begin{thm}
Let $\Gamma$ be a compact and connected one-dimensional CW-complex. 
\begin{itemize}
	\item The braid group $B_2(\Gamma)$ is hyperbolic if and only if $\Gamma$ does not contain a pair of disjoint induced cycles.
	\item The braid group $B_3(\Gamma)$ is hyperbolic if and only if $\Gamma$ is a tree, or a sun graph, or a rose graph, or a pulsar graph.
	\item For every $n \geq 4$, the braid group $B_n(\Gamma)$ is hyperbolic if and only if $\Gamma$ is a rose graph.
\end{itemize}
\end{thm}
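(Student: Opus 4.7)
The plan is to work with the Abrams discrete model: after a sufficient subdivision of $\Gamma$, the unordered configuration space $UC_n(\Gamma)$ deformation retracts onto the discrete configuration space $UD_n(\Gamma)$, which carries a natural non-positively curved cube complex structure. Consequently $B_n(\Gamma)$ acts geometrically on the CAT(0) cube complex $\widetilde{UD_n(\Gamma)}$, and Gromov-hyperbolicity of $B_n(\Gamma)$ reduces to the absence of an isometrically embedded flat plane in this universal cover, equivalently, to the absence of a $\mathbb{Z}^2$ subgroup generated by two hyperplane-stabilizing elements. My intention is to apply the special colorings formalism of \cite{SpecialRH} to detect such pairs of hyperplanes in purely combinatorial terms on $\Gamma$.

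For the ``only if'' direction, in each forbidden case I would exhibit an explicit $\mathbb{Z}^2$ subgroup. When $n=2$ and $\Gamma$ contains two disjoint induced cycles $C_1, C_2$, the commuting generators are obtained by sending particle~$1$ around $C_1$ and particle~$2$ around $C_2$; the two trajectories commute because the particles never interact. For larger $n$, one parks the extra particles on pendant edges or isolated segments away from the action and reduces to an analogous independent-circulation construction. The nontrivial combinatorial input is the converse statement: whenever $\Gamma$ fails to be a tree, sun, rose, or pulsar (for $n=3$), respectively a rose (for $n \geq 4$), such a configuration of two disjoint independent circulations must exist. This requires a careful case analysis based on the degrees of the essential vertices of $\Gamma$ and the mutual arrangement of its cycles.

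For the ``if'' direction, when $\Gamma$ is a tree, $B_n(\Gamma)$ is free by classical work of Abrams and Ghrist, and the same holds when $\Gamma$ is a rose (yielding a free group for every $n$), so hyperbolicity is automatic. For sun and pulsar graphs with $n=3$, one must rule out flats in $\widetilde{UD_3(\Gamma)}$ despite the presence of cycles. The strategy is to use the special colorings to parametrize hyperplanes of $\widetilde{UD_3(\Gamma)}$, then to verify that any two hyperplanes that would span a flat require two cycles of $\Gamma$ to be traversed simultaneously by disjoint sets of particles, a configuration that the common-center structure of sun and pulsar graphs forbids when only three particles are available.

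The main obstacle I anticipate is the case $n=3$. On the combinatorial side, showing that every graph outside the four listed families admits two independent circulations of three particles requires a meticulous enumeration of cycle arrangements and essential vertices. On the geometric side, verifying hyperbolicity for pulsar graphs, whose cube complex structure is richer than that of trees or roses, is likely the most technical step: one has to exploit the precise interaction between hyperplanes supplied by the special colorings to preclude all potential $\mathbb{Z}^2$ flats, and to make sure the list of allowed graphs is sharp.
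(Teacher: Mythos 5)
Your overall framework (discrete cube-complex model, special colorings, hyperbolicity of a cocompact special group $\Leftrightarrow$ no $\mathbb{Z}^2$) is the same as the paper's, but there is a genuine gap in how you identify the combinatorial obstruction, and it breaks both directions of your argument for $n\geq 3$. You describe the obstruction as ``two disjoint independent circulations,'' i.e.\ two disjoint cycles of $\Gamma$ traversed by disjoint sets of particles, with extra particles parked out of the way. That is the right picture only for $n=2$. The paper's reduction (via Lemma \ref{lemma:nothyp}) is that non-hyperbolicity is equivalent to the existence of two \emph{disjoint subgraphs} $\Lambda_1,\Lambda_2$ and a splitting of the particles so that \emph{both} braid groups $B_r(\Lambda_1)$ and $B_s(\Lambda_2)$ are non-trivial --- and, crucially, by Lemma \ref{lem:braidgroupnontrivial} a connected subgraph with at least two particles has non-trivial braid group as soon as it contains a vertex of degree at least three; no cycle is needed (the generator is the exchange move $ca^{-1}bc^{-1}ab^{-1}$ on a tripod). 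Your proposal never produces this second source of commuting elements.

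Concretely: for $n=3$, a graph consisting of one cycle plus a distant vertex of degree three has no second cycle, yet $B_3$ contains $\mathbb{Z}^2$ (one particle circulates the cycle, two particles exchange at the tripod) and is not hyperbolic --- your independent-circulation construction cannot detect this, so your case analysis could not arrive at the tree/sun/rose/pulsar list. For $n\geq 4$, the $H$-graph (a segment joining two degree-three vertices) has no cycles at all, yet $B_4$ contains $\mathbb{Z}^2$ from two disjoint tripod exchanges; your criterion would wrongly declare it hyperbolic, contradicting the claim that only rose graphs survive. The same omission undermines your ``if'' direction for sun and pulsar graphs: the property you must verify is not that two cycles cannot be traversed simultaneously, but that one cannot find two disjoint subgraphs each supporting a non-trivial braid group with the available particles, where ``non-trivial'' includes the tripod case. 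Once you add the non-triviality lemma (trivial iff the components with one particle are trees and those with $\geq 2$ particles are segments), your plan essentially becomes the paper's proof; without it, the announced case analysis cannot close.
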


Interestingly, the graph braid group of a rose graph turns out to be free, so that the (cohomologic or asymptotic) dimension of a hyperbolic graph braid group must be at most three. Finding non-free hyperbolic graph braid groups is an interesting problem; see Problem \ref{problem:hypbraidgroups} and its related discussion. 

Next, we are able to show that essentially all braid groups are acylindrically hyperbolic. Roughly speaking, a graph braid group (which is not cyclic) is acylindrically hyperbolic if and only if the corresponding graph is connected. For the implication, see Lemma \ref{lem:product} and the observation which follows; for the converse, we prove:

\begin{thm}
Let $\Gamma$ be a connected compact one-dimensional CW-complex and $n \geq 2$ an integer. The braid group $B_n (\Gamma)$ is either cyclic or acylindrically hyperbolic. 
\end{thm}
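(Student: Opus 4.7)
The plan is to exploit the standard geometric action of $B_n(\Gamma)$ on a CAT(0) cube complex. After a sufficient subdivision of $\Gamma$, Abrams' unordered discretized configuration space $UD_n(\Gamma)$ is a compact nonpositively curved cube complex whose fundamental group is $B_n(\Gamma)$; hence $B_n(\Gamma)$ acts freely and cocompactly on the CAT(0) cube complex $Y := \widetilde{UD_n(\Gamma)}$. In particular, $B_n(\Gamma)$ is torsion-free, and so the dichotomy \emph{cyclic or acylindrically hyperbolic} is exactly the torsion-free incarnation of \emph{virtually cyclic or acylindrically hyperbolic}.

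The first step is to pass to the essential core of $Y$ and invoke the Caprace--Sageev rank-rigidity theorem. Either $Y$ virtually decomposes as a non-trivial product preserved by $B_n(\Gamma)$, or $Y$ is irreducible and reduces to either a single line or a genuinely higher-dimensional CAT(0) cube complex on which $B_n(\Gamma)$ contains a rank-one element. In the rank-one case, cocompactness and properness promote this element to a contracting WPD element (Caprace--Sageev, Bestvina--Fujiwara), and Osin's criterion then yields acylindrical hyperbolicity. The line case corresponds precisely to $B_n(\Gamma)$ being infinite cyclic.

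It remains to rule out the non-trivial product case whenever $B_n(\Gamma)$ is not cyclic. For this I would invoke the special colorings of \cite{SpecialRH}: each hyperplane of $Y$ carries a coloring by an edge of $\Gamma$, and the combinatorics of how colored hyperplanes intersect in $Y$ reflect the adjacencies between edges in $\Gamma$. A $B_n(\Gamma)$-invariant product decomposition of the essential core would force a partition of the colors into two families of pairwise transverse hyperplane-orbits, which in turn should translate into a separation of $\Gamma$ into two subgraphs whose particles never interact --- a situation incompatible with connectedness of $\Gamma$, except in degenerate cases where the whole action already collapses to an action on a line.

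The central obstacle is this translation from a hyperplane-level product decomposition of $Y$ back to a combinatorial decomposition of $\Gamma$ via the special coloring formalism. I expect the argument to require a careful case analysis depending on whether two particles can be cycled along a common loop in $\Gamma$, share a vertex of degree at least $3$, or can only move in essentially disjoint portions of $\Gamma$; the special colorings of \cite{SpecialRH} should provide the uniform combinatorial bookkeeping needed to handle these cases and to pinpoint precisely when the residual degenerate configuration arises, giving $B_n(\Gamma) \cong \mathbb{Z}$.
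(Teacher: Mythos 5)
Your overall strategy (rank rigidity for the cocompact action on the CAT(0) cube complex $\widetilde{UC_n(\Gamma)}$, then promoting a rank-one element to a WPD element) is a legitimate and genuinely different route from the paper, and its first two steps are sound modulo standard references. But the proof as written has a genuine gap exactly where you flag ``the central obstacle'': you never actually rule out the case where the essential core splits as a product of two unbounded factors, and the heuristic you offer for doing so does not work as stated. A $B_n(\Gamma)$-invariant product decomposition partitions the \emph{hyperplane orbits} of the universal cover into two mutually transverse families; it does not partition the \emph{colors} (edges of $\Gamma$). The coloring map $[e,S]\mapsto e$ is very far from injective --- the paper's Figure 6 exhibits two distinct hyperplanes carrying the same edge of $\Gamma$, and these need not lie in the same orbit --- so a splitting of the hyperplanes into two crossing families need not descend to a separation of $E(\Gamma)$ into two families of pairwise disjoint edges, and the intended contradiction with connectedness of $\Gamma$ does not follow. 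Moreover, the ``degenerate cases where the whole action collapses to a line'' are precisely the content of the theorem (which connected $\Gamma$ give cyclic $B_n(\Gamma)$), and your argument gives no mechanism for identifying them. Until the product case is genuinely excluded for every connected, non-cyclic instance, the proof is incomplete.

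For comparison, the paper avoids rank rigidity entirely. It first establishes a criterion (Proposition \ref{prop:specialcentraliseracyl}): a non-cyclic fundamental group of a compact special cube complex is acylindrically hyperbolic if and only if it contains a non-trivial element with cyclic centraliser. It then uses the special-coloring embedding $B_n(\Gamma,S)\hookrightarrow A(\Delta)$ together with Servatius' Centralizer Theorem to show (Proposition \ref{prop:cycliccentraliser}) that any non-trivial $g$ whose support is connected and whose set of moving particles has full cardinality $n$ has infinite cyclic centraliser. Finally it exhibits such an element explicitly: if $\Gamma$ contains a cycle, the element rotating all $n$ particles around it; otherwise $\Gamma$ contains a vertex of degree at least three, and a concrete shuffle of all $n$ particles through a subdivided tripod does the job. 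This is where the case analysis you anticipate (common loop versus degree-$3$ vertex) actually lives in the paper, but it is used constructively to produce a witness element rather than to analyse a putative product decomposition. If you want to salvage your approach, the cleanest fix is probably to import exactly this ingredient: an element with cyclic centraliser is already incompatible with the group acting with unbounded orbits on both factors of a product, which would close your remaining case --- but at that point you have reproduced the paper's key lemma and the rank-rigidity scaffolding becomes redundant.
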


Finally, we turn to the relative hyperbolicity of graph braid groups. Our main result in this direction is the characterisation of graph braid groups which are toral relatively hyperbolic, i.e., hyperbolic relative to free abelian subgroups.

\begin{thm}
Let $\Gamma$ be a connected compact one-dimensional CW-complex. 
\begin{itemize}
	\item The braid group $B_2(\Gamma)$ is hyperbolic relative to abelian subgroups if and only if $\Gamma$ does not contain an induced cycle which is disjoint from two other induced cycles.
	\item The braid group $B_3(\Gamma)$ is hyperbolic relative to abelian subgroups if and only if $\Gamma$ does not contain an induced cycle disjoint from two other induced cycles; nor a vertex of degree at least four disjoint from an induced cycle; nor a segment between two vertices of degree three which is disjoint from an induced cycle; nor a vertex of degree three which is disjoint from two induced cycles; nor two disjoint induced cycles one of those containing a vertex of degree three.
	\item The braid group $B_4(\Gamma)$ is hyperbolic relative to abelian subgroups if and only if $\Gamma$ is a rose graph, or a segment linking two vertices of degree three, or a cycle containing two vertices of degree three, or two cycles glued along a non-trivial segment.
	\item For every $n \geq 5$, the braid group $B_n(\Gamma)$ is hyperbolic relative to abelian subgroups if and only if $\Gamma$ is a rose graph. If so, $B_n(\Gamma)$ is a free group. 
\end{itemize}
\end{thm}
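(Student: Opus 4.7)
The plan is to apply the toral relative hyperbolicity criterion developed in \cite{SpecialRH} to the special cube complex $\X$ underlying the (suitably subdivided) unordered discretized configuration space, on which $B_n(\Gamma)$ acts freely and cocompactly by isometries. That criterion should translate toral relative hyperbolicity into a combinatorial condition on the hyperplanes and the essential observers $\Obsess$. Under this dictionary, abelian subgroups $\mathbb{Z}^k \le B_n(\Gamma)$ arise precisely from placing $k$ of the $n$ particles on $k$ pairwise disjoint induced cycles of $\Gamma$ while the remaining $n-k$ particles sit at fixed vertices separated from those cycles. Reading this off from the special-coloring data provides the bridge between the abstract combinatorial hypothesis of the criterion and the graph-theoretic conditions appearing in the statement.

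For each value of $n$ I would run a dichotomy governed by the listed conditions. In the ``only if'' direction, each forbidden configuration should produce a subgroup isomorphic to $F \times \mathbb{Z}^m$ with $F$ nonabelian, together with two distinct abelian peripherals into which $F$ projects nontrivially: this is the standard obstruction to toral relative hyperbolicity. For instance, a vertex of degree at least four disjoint from an induced cycle lets two particles generate a nonabelian free subgroup near the branch vertex (by the first bullet of the hyperbolicity theorem) while a third particle loops the disjoint cycle, giving $F_2 \times \mathbb{Z}$; a segment between two degree-three vertices disjoint from an induced cycle gives a similar obstruction via the associated free-by-cyclic structure. In the ``if'' direction, the absence of the forbidden configurations should force every maximal product region of $\X$ to be abelian and pairwise isolated in the sense required by the criterion, and the conclusion follows directly. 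For $n \ge 5$ one combines the observation with the earlier hyperbolicity theorem: the rose case gives a free (hence trivially toral relatively hyperbolic) braid group, whereas any non-rose connected $\Gamma$ supporting $n \ge 5$ particles always carries simultaneously a $\mathbb{Z}^2$ subgroup and a nonabelian factor commuting with a $\mathbb{Z}$, killing toral relative hyperbolicity.

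The main obstacle will be the $n=3$ case, in which five distinct forbidden configurations must each correspond to a distinct algebraic obstruction and together exhaust all of them. Handling this requires a careful enumeration of the essentially finite list of ways three particles can interact in a small neighbourhood of a branch vertex or of a cycle, and verifying case by case that the maximal product regions of $\X$ are abelian if and only if none of the five configurations appears. A secondary difficulty, present already for $n=4$, is to check the malnormality/isolation hypothesis of the criterion for the nontrivial positive cases in the list — particularly for two cycles glued along a nontrivial segment, where distinct $\mathbb{Z}^2$ subgroups coexist and must be verified to be peripherally isolated rather than merged by commuting particles.
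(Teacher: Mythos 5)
Your overall strategy---reducing toral relative hyperbolicity to a criterion from \cite{SpecialRH} applied to the cube complex $UC_n(\Gamma)$---is the right starting point, but the proposal has a genuine gap in both directions. First, the relevant criterion (Theorem \ref{thm:RHspecialAbelian}) is much cleaner than what you describe: the fundamental group of a compact special cube complex is toral relatively hyperbolic if and only if it contains no subgroup isomorphic to $\mathbb{F}_2 \times \mathbb{Z}$. There is no isolation or malnormality hypothesis to verify and no analysis of observers or of maximal product regions; the difficulties you flag about checking that product regions are ``pairwise isolated'' simply do not arise. What does need to be proved---and is absent from your proposal---is the hard half of the subgroup characterisation: if $B_n(\Gamma)$ contains $\mathbb{F}_2 \times \mathbb{Z}$, then $\Gamma$ contains two \emph{disjoint} subgraphs $\Lambda_1,\Lambda_2$ such that $B_r(\Lambda_1, S\cap\Lambda_1)$ contains a nonabelian free subgroup and $B_s(\Lambda_2, S\cap\Lambda_2)$ is nontrivial. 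The paper obtains this from the embedding into the right-angled Artin group $A(\Delta)$ (Proposition \ref{prop:braidRAAG}) together with Servatius' Centralizer Theorem, which forces the $\mathbb{Z}$ factor and the free factor to be supported on disjoint sets of edges of $\Gamma$. A ``careful enumeration of the ways three particles can interact near a branch vertex'' is not a substitute for this step, and without it the ``if'' direction of each bullet is unproved.

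Second, your dictionary for abelian subgroups is incorrect: it is not true that $\mathbb{Z}^k$ subgroups arise precisely from $k$ particles placed on $k$ pairwise disjoint induced cycles. A $\mathbb{Z}$ factor is also produced by two particles on a star with three arms (Lemma \ref{lem:braidcyclic}: $B_2$ of such a star is infinite cyclic), and a nonabelian free factor by two particles near a vertex of degree four, or near two distinct vertices of degree three (Lemma \ref{lem:freeinbraid}). This is exactly why the conditions for $n=3$ and $n=4$ in the statement involve vertices of degree three and four and segments between them, and not only cycles; with your dictionary those conditions could not be recovered. Once the $\mathbb{F}_2 \times \mathbb{Z}$ characterisation is in place, the remaining work is a purely graph-theoretic case analysis driven by Lemmas \ref{lem:braidgroupnontrivial}, \ref{lem:braidcyclic} and \ref{lem:freeinbraid} (when is a graph braid group trivial, infinite cyclic, or not free abelian), together with Lemma \ref{lem:rosefree} for the freeness assertion when $n \geq 5$.
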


Interestingly, it follows that the (asymptotic or cohomologic) dimension of a graph braid group which is hyperbolic relative to abelian subgroups must be at most four; in particular, such a group cannot contain $\mathbb{Z}^5$ as a subgroup. 

The problem of charaterising relatively hyperbolic graph braid groups seems to be much more difficult in full generality, and we were not able to solve it. Nevertheless, we prove the following sufficient criterion for braid groups on graphs with two particles. (For a description of the associated peripheral subgroups, see the more precise statement of Theorem \ref{thm:braidgrouprh} in Section \ref{section:RH}.)

\begin{thm}
Let $\Gamma$ be a connected compact one-dimensional CW-complex, and let $\mathcal{G}$ be a collection of subgraphs of $\Gamma$ satisfying the following conditions:
\begin{itemize}
	\item every pair of disjoint simple cycles of $\Gamma$ is contained in some $\Lambda \in \mathcal{G}$;
	\item for every distinct $\Lambda_1,\Lambda_2 \in \mathcal{G}$, the intersection $\Lambda_1 \cap \Lambda_2$ is either empty or a disjoint union of segments;
	\item if $\gamma$ is a reduced path between two vertices of some $\Lambda \in \mathcal{G}$ which is disjoint from some cycle of $\Lambda$, then $\gamma \subset \Lambda$.
\end{itemize}
If $\mathcal{G}$ is a collection of proper subgraphs, then $B_2(\Gamma)$ is relatively hyperbolic.
\end{thm}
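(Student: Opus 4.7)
The plan is to invoke the criterion for relative hyperbolicity of cube-complex groups developed by the author in the ``special colorings'' paper, with the candidate peripheral structure parameterised by $\mathcal{G}$. Following Abrams, I view $B_2(\Gamma)$ as the fundamental group of the unordered discretised configuration space $UD_2(\Gamma)$, a non-positively curved (indeed special) square complex of dimension two whose squares correspond to pairs of disjoint edges of $\Gamma$, and whose flat $\mathbb{Z}^2$-subgroups arise precisely from pairs of disjoint embedded cycles. For each $\Lambda \in \mathcal{G}$, I would set $Y_\Lambda \subset UD_2(\Gamma)$ to be the subcomplex of configurations whose two particles both lie in $\Lambda$; this yields a subgroup $H_\Lambda \leq B_2(\Gamma)$ (essentially a copy of $B_2(\Lambda)$). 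The target is then to prove that $B_2(\Gamma)$ is hyperbolic relative to $\{H_\Lambda\}_{\Lambda \in \mathcal{G}}$.

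Next, I would verify the three geometric inputs required by the criterion. Condition~(iii), on reduced paths avoiding a cycle, should yield combinatorial convexity of the lifts of $Y_\Lambda$ in the universal cover: any hyperplane dual to a geodesic between two points of such a lift ought to correspond to an edge inside $\Lambda$, for otherwise one could extract a reduced path in $\Gamma$ violating~(iii). Condition~(ii), on pairwise intersections being disjoint unions of segments, should upgrade to almost-malnormality of the collection $\{H_\Lambda\}$: whenever two translates $gY_\Lambda$ and $g'Y_{\Lambda'}$ meet, their intersection is cubically convex and at most one-dimensional, forcing the coarse intersection of stabilisers to be virtually cyclic. Finally, condition~(i) guarantees that every square of $UD_2(\Gamma)$, born from two disjoint edges lying in a pair of disjoint simple cycles, is absorbed by at least one peripheral, so that the cone-off kills every flat at once.

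The main obstacle will be hyperbolicity of the coned-off complex. Even once every $2$-cube is covered by some peripheral, one still has to rule out long ``ladders'' or flat half-strips built by gluing squares from distinct $\Lambda$'s along their segment-type intersections permitted by~(ii); such strips would survive the cone-off as bi-infinite quasi-flats. Here the hypothesis that each $\Lambda \in \mathcal{G}$ is a \emph{proper} subgraph of $\Gamma$ should be crucial: travelling arbitrarily far along such a strip forces one to cross edges outside every single $\Lambda$, and a disc-diagram argument, using~(iii) to straighten the boundary and~(ii) to control how adjacent $\Lambda$'s can overlap, should show that any such quasi-flat collapses. The bulk of the proof will be this disc-diagram analysis, together with identifying the peripheral subgroups precisely enough to match the more detailed statement alluded to in the introduction.
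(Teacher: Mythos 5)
Your overall architecture is the one the paper uses: pass to the cubical universal cover $X_2(\Gamma)$, take for each $\Lambda\in\mathcal{G}$ the convex subcomplexes of configurations supported in $\Lambda$ (the cosets $w\langle\Lambda\rangle$), and cone them off. But the proposal stops exactly where the proof begins: every substantive step is phrased as ``should yield'', ``should upgrade'', ``should show'', and the part you yourself identify as the bulk of the work --- hyperbolicity of the coned-off complex --- is left as an unexecuted disc-diagram analysis. The paper's actual content is two concrete claims about flat rectangles. First, two distinct cosets are crossed by at most $L=\#V(UC_2(\Gamma))$ common hyperplanes: a longer common piece produces a reduced path of length $>L$ supported in $\Lambda_1\cap\Lambda_2$, hence a nontrivial spherical diagram there, hence a cycle in $\Lambda_1\cap\Lambda_2$, contradicting condition (ii). Second, every $2L$-thick flat rectangle lies in a uniform neighborhood of a \emph{single} coset: one reads two simple cycles $A,B$ off one side of the rectangle and two simple cycles $C,D$ off the other (disjoint from the first pair), uses condition (i) to place $A\cup C$, $A\cup D$, $B\cup D$ in members of $\mathcal{G}$, condition (ii) to force these members to coincide, and condition (iii) to absorb the arcs joining the cycles. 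Your reading of condition (i) --- that each square, or each flat, is individually ``absorbed by some peripheral'' --- is strictly weaker than this and does not give hyperbolicity of the cone-off; that is precisely the ladder/strip problem you flag, and no mechanism for resolving it is supplied.

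Two of your stated intermediate targets are also off. Condition (ii) must give \emph{finite} (here trivial, the group being torsion-free) intersections of distinct peripheral conjugates, not merely virtually cyclic ones: an infinite virtually cyclic intersection is already incompatible with relative hyperbolicity, and what condition (ii) actually delivers is that $B_2(\Lambda_1\cap\Lambda_2)$ is trivial because a disjoint union of segments has trivial braid groups. And properness of the subgraphs plays no role in collapsing quasi-flats; the three claims above are proved without it. It enters only at the very end, to guarantee that the peripheral subgroups $w\langle\Lambda\rangle w^{-1}$ are proper subgroups, so that the relative hyperbolicity obtained is not the degenerate one relative to the whole group. Attributing the collapse of long strips to properness points the argument in the wrong direction.
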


In all our discussion dedicated to graph braid groups, we illustrate our criteria by giving concrete examples. Also, we leave several open questions which we think to be of interest in Section \ref{section:questions}.

\section{Preliminaries}

\subsection{Special cube complexes}\label{section:prel}

\noindent
A \textit{cube complex} is a CW-complex constructed by gluing together cubes of arbitrary (finite) dimension by isometries along their faces. Furthermore, it is \textit{nonpositively curved} if the link of any of its vertices is a simplicial \textit{flag} complex (i.e., $n+1$ vertices span a $n$-simplex if and only if they are pairwise adjacent), and \textit{CAT(0)} if it is nonpositively curved and simply connected. See \cite[page 111]{MR1744486} for more information.

\medskip \noindent
A fundamental feature of cube complexes is the notion of \textit{hyperplane}. Let $X$ be a nonpositively curved cube complex. Formally, a \textit{hyperplane} $J$ is an equivalence class of edges, where two edges $e$ and $f$ are equivalent whenever there exists a sequence of edges $e=e_0,e_1,\ldots, e_{n-1},e_n=f$ where $e_i$ and $e_{i+1}$ are parallel sides of some square in $X$. Notice that a hyperplane is uniquely determined by one of its edges, so if $e \in J$ we say that $J$ is the \textit{hyperplane dual to $e$}. Geometrically, a hyperplane $J$ is rather thought of as the union of the \textit{midcubes} transverse to the edges belonging to $J$. See Figure \ref{figure17}.
\begin{figure}
\begin{center}
\includegraphics[trim={0 13cm 10cm 0},clip,scale=0.4]{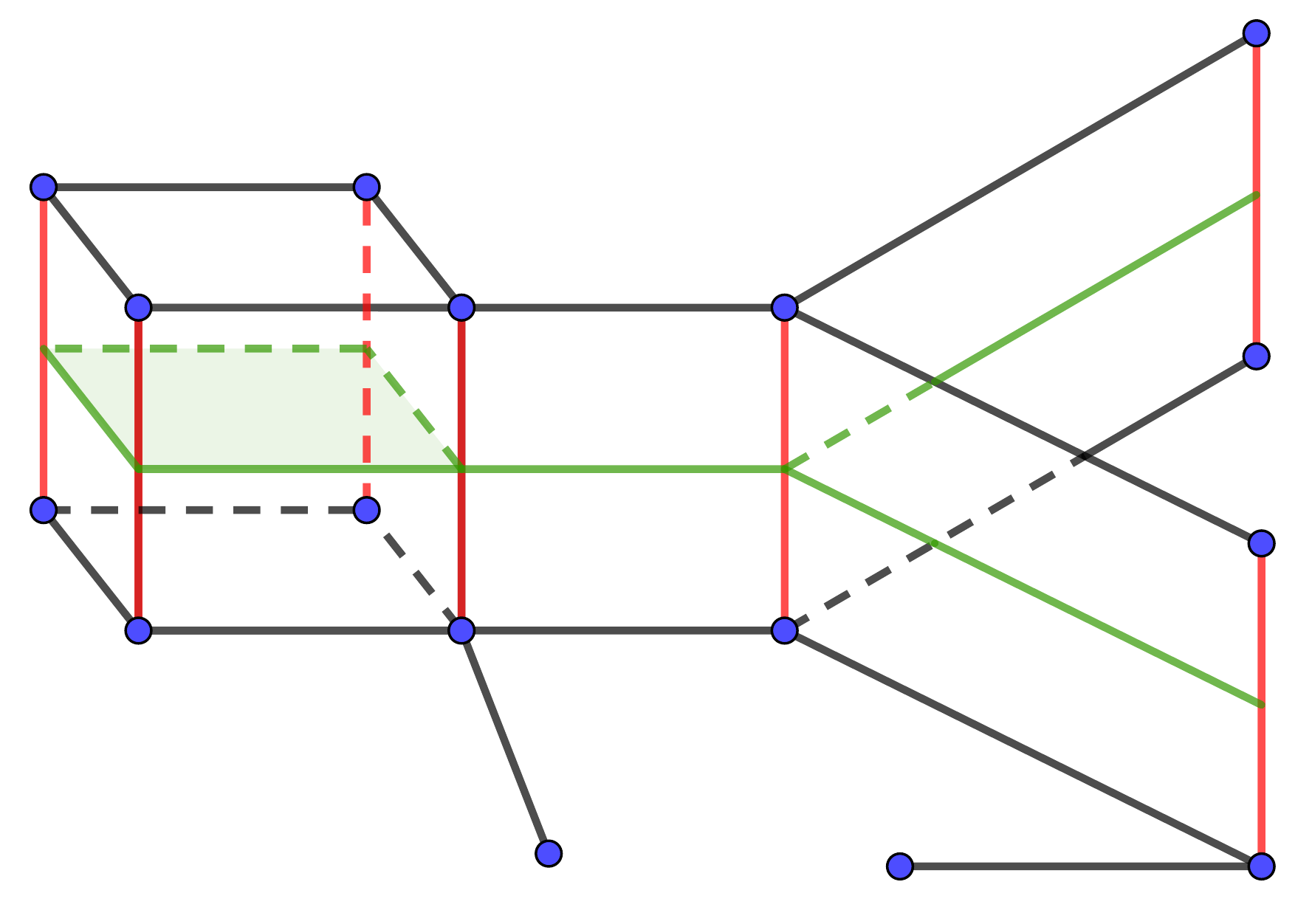}
\caption{A hyperplane (in red) and its geometric realisation (in green).}
\label{figure27}
\end{center}
\end{figure}
Similarly, one may define \textit{oriented hyperplanes} as classes of oriented edges. If $J$ is the hyperplane dual to an edge $e$ and if we fix an orientation $\vec{e}$, we will note $\vec{J}$ the oriented hyperplane dual to $\vec{e}$. It may be thought of as an \textit{orientation} of $J$, and we will note $- \vec{J}$ the opposite orientation of $J$. 

\begin{definition}
Let $X$ be a cube complex. The \emph{crossing graph} $\Delta X$ is the graph whose vertices are the hyperplanes of $X$ and whose edges link two transverse hyperplane. Similarly, the \emph{oriented crossing graph} $\vec{\Delta} X$ is the graph whose vertices are the oriented hyperplanes of $X$ and whose edges link two oriented hyperplanes whenever their underlying unordered hyperplanes are transverse.
\end{definition}

\noindent
Roughly speaking, \emph{special cube complexes} are nonpositively-curved cube complexes which do not contain ``pathological configurations'' of hyperplanes. Let us define precisely what these configurations are.

\begin{definition}
Let $J$ be a hyperplane with a fixed orientation $\vec{J}$. We say that $J$ is:
\begin{itemize}
	\item \textit{2-sided} if $\vec{J} \neq - \vec{J}$,
	\item \textit{self-intersecting} if there exist two edges dual to $J$ which are non-parallel sides of some square,
	\item \textit{self-osculating} if there exist two oriented edges dual to $\vec{J}$ with the same initial points or the same terminal points, but which do not belong to a same square.
\end{itemize}
Moreover, if $H$ is another hyperplane, then $J$ and $H$ are:
\begin{itemize}
	\item \textit{transverse} if there exist two edges dual to $J$ and $H$ respectively which are non-parallel sides of some square,
	\item \textit{inter-osculating} if they are transverse, and if there exist two edges dual to $J$ and $H$ respectively with one common endpoint, but which do not belong to a same square.
\end{itemize}
\end{definition}
\noindent
Sometimes, one refers 1-sided, self-intersecting, self-osculating and inter-osculating hyperplanes as \textit{pathological configurations of hyperplanes}. The last three configurations are illustrated on Figure \ref{figure17}.
\begin{figure}
\begin{center}
\includegraphics[trim={0, 19.5cm 1cm 0},clip,scale=0.45]{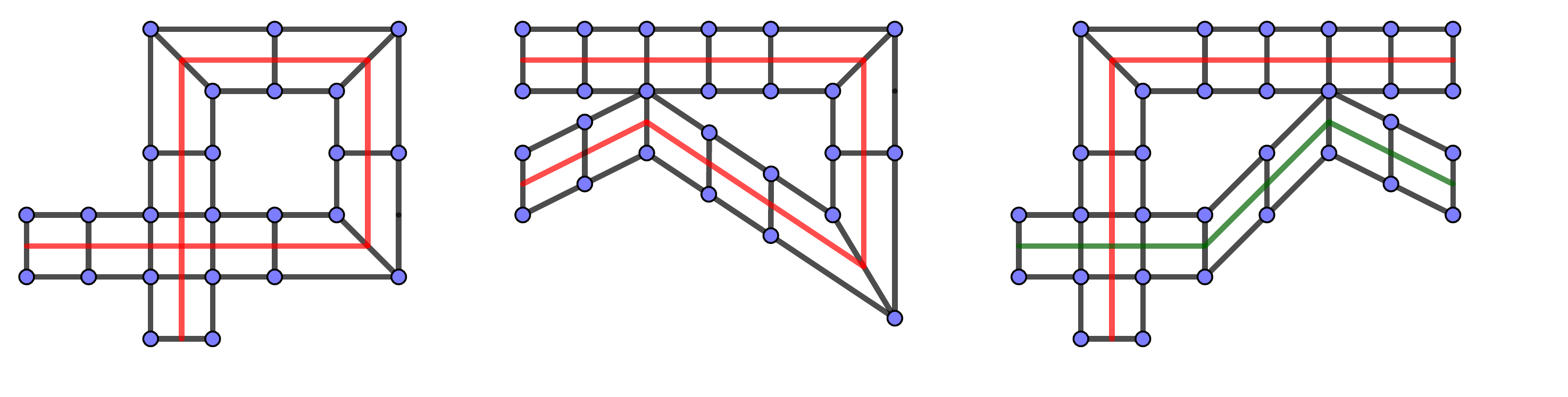}
\caption{From left to right: self-intersection, self-osculation, inter-osculation.}
\label{figure17}
\end{center}
\end{figure}

\begin{definition}
A \emph{special cube complex} is a nonpositively curved cube complex whose hyperplanes are two-sided and which does not contain self-intersecting, self-osculating or inter-osculating hyperplanes. A group which can be described as the fundamental group of a compact special cube complex is \emph{special}. A \emph{virtually special group} is a group which contains a finite-index subgroup which is cocompact special. 
\end{definition}

\noindent
We emphasize that, in this article, a special group is the fundamental group of a \emph{compact} special cube complex. To avoid ambiguity with fundamental groups of not necessarily compact special cube complexes, they are sometimes referred to as \emph{cocompact special groups} in the literature.

\subsection{Homotopy in cube complexes} 

\noindent
Let $X$ be a cube complex (not necessarily nonpositively-curved). For us, a \emph{path} in $X$ is the data of a sequence of successively adjacent edges. What we want to understand is when two such paths are homotopic (with fixed endpoints). For this purpose, we need to introduce the following elementary transformations. One says that:  
\begin{itemize}
	\item a path $\gamma \subset X$ contains a \emph{backtrack} if the word of oriented edges read along $\gamma$ contains a subword $ee^{-1}$ for some oriented edge $e$;
	\item a path $\gamma' \subset X$ is obtained from another path $\gamma \subset X$ by \emph{flipping a square} if the word of oriented edges read along $\gamma'$ can be obtained from the corresponding word of $\gamma$ by replacing a subword $e_1e_2$ with $e_2'e_1'$ where $e_1',e_2'$ are opposite oriented edges of $e_1,e_2$ respectively in some square of $X$. 
\end{itemize}
We claim that these elementary transformations are sufficient to determine whether or not two paths are homotopic. More precisely:

\begin{prop}\label{prop:cubehomotopy}
Let $X$ be a cube complex and $\gamma,\gamma' \subset X$ two paths with the same endpoints. The paths $\gamma,\gamma'$ are homotopic (with fixed endpoints) if and only if $\gamma'$ can be obtained from $\gamma$ by removing or adding backtracks and flipping squares. 
\end{prop}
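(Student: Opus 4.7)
Sufficiency is immediate: a backtrack $ee^{-1}$ is fixed-endpoint homotopic to the constant path at the initial vertex of $e$, while flipping a square by replacing $e_1 e_2$ with $e_2' e_1'$ is the homotopy realized by the 2-cell of the corresponding square of $X$. Composing such elementary homotopies shows that any path produced from $\gamma$ by the allowed moves is fixed-endpoint homotopic to $\gamma$.

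For the necessity, given a homotopy $H \colon [0,1]^2 \to X$ between $\gamma$ and $\gamma'$, the plan is to invoke cellular approximation in order to replace $H$, after subdividing $[0,1]^2$ into a sufficiently fine $N \times M$ grid, by a cellular map sending each vertex of the grid to a vertex of $X$, each edge of the grid to an edge or a vertex of $X$, and each closed small square of the grid into a single closed cube of $X$. The horizontal slices $\{y=j/M\}$ of the grid are then mapped to edge paths $\gamma=\gamma_0,\gamma_1,\ldots,\gamma_M=\gamma'$ in $X$, so it will suffice to show that $\gamma_j$ and $\gamma_{j+1}$ are related by the allowed moves for every $j$.

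Between two consecutive slices, the strip in the grid is a row of small squares, each of which maps into a single cube of $X$. Working column by column, the problem will reduce to the following local claim: \emph{inside a single cube $C$ of $X$, any two edge paths with the same endpoints can be transformed into each other by a sequence of backtracks and square flips performed entirely inside $C$}. I plan to prove this local claim by induction on $\dim C$. The case $\dim C=1$ is trivial. For $\dim C\geq 2$, writing $C=[0,1]\times C'$, I would use square flips to commute every edge in the $[0,1]$-factor past adjacent $C'$-edges, thereby pushing all $[0,1]$-factor traversals to the beginning of the path; most of these cancel as backtracks (since the endpoints have a fixed $[0,1]$-coordinate), and the residue lies in a face isomorphic to $C'$, which is handled by the inductive hypothesis.

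The step I expect to be the main obstacle is exactly this inductive push: the commutation bookkeeping has to be done carefully enough that each swap is realised by an honest square flip, and one must verify that after all cancellations the remaining edge path truly lies in a single face of $C$. Once the local claim is secured, re-assembling the column-by-column argument inside each strip, and then the strip-by-strip argument across the homotopy, is routine.
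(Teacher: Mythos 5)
Your overall strategy is sound and the statement is true, but you should know that the paper disposes of this proposition in one line: it observes that removing backtracks computes homotopy classes of edge paths in the $1$-skeleton (the fundamental groupoid of a graph is free on its edges), and that attaching the $2$-cells of $X$ --- which are exactly the squares --- imposes precisely the square-flip relations on that free groupoid; higher-dimensional cubes do not affect the fundamental groupoid. It then cites the standard presentation of the fundamental groupoid of a CW-complex (Brown, \emph{Topology and Groupoids}, 9.1.6). Your proposal essentially reproves that citation by hand: the grid subdivision of the homotopy, the column-by-column ladder argument inside each strip, and the local claim about edge paths in a single cube (whose induction on $\dim C$, pushing $[0,1]$-factor edges to the front via square flips and cancelling them as backtracks, is correct). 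What your route buys is self-containedness and an explicit algorithm; what the citation buys is brevity and the avoidance of the one genuinely delicate step in your plan. That step is the approximation: cellular approximation only guarantees that each closed grid square lands in the $2$-skeleton, \emph{not} in a single closed cube of $X$. To get the latter you need an extra argument (e.g.\ a Lebesgue-number covering of $[0,1]^2$ by preimages of open stars of vertices, followed by a further homotopy pushing each small square into the carrier of the corresponding vertex star), after which your local claim and the ladder computation $b_1\cdots b_N\sim (l_1t_1r_1^{-1})\cdots(l_Nt_Nr_N^{-1})\sim t_1\cdots t_N$ go through. If you fill in that approximation step carefully, your proof is complete; otherwise it is exactly the content you are implicitly borrowing from the CW-presentation theorem.
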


\noindent
This statement follows from the fact that flipping squares provide the relations of the fundamental groupoid of $X$; see \cite[Statement 9.1.6]{BrownGroupoidTopology} for more details.

\subsection{Special colorings}\label{section:formalism}

\noindent
In this section, we review the formalism introduced in \cite{SpecialRH}. Our central definition is the following:

\begin{definition}\label{def:specialcolor}
Let $X$ be a cube complex. A \emph{special coloring} $(\Delta,\phi)$ is the data of a graph $\Delta$ and a coloring map 
$$\phi : V \left( \vec{\Delta} X \right) \to V(\Delta) \sqcup V(\Delta)^{-1}$$
satisfying the following conditions:
\begin{itemize}
	\item for every oriented hyperplane $J$, the equality $\phi(J^{-1})=\phi(J)^{-1}$ holds;
	\item two transverse oriented hyperplanes have adjacent colors;
	\item no two oriented hyperplanes adjacent to a given vertex have the same color;
	\item two oriented edges with same origin whose dual oriented hyperplanes have adjacent colors must generate a square. 
\end{itemize}
\end{definition}
\begin{figure}
\begin{center}
\includegraphics[trim={0, 19.5cm 23.5cm 0},clip,scale=0.43]{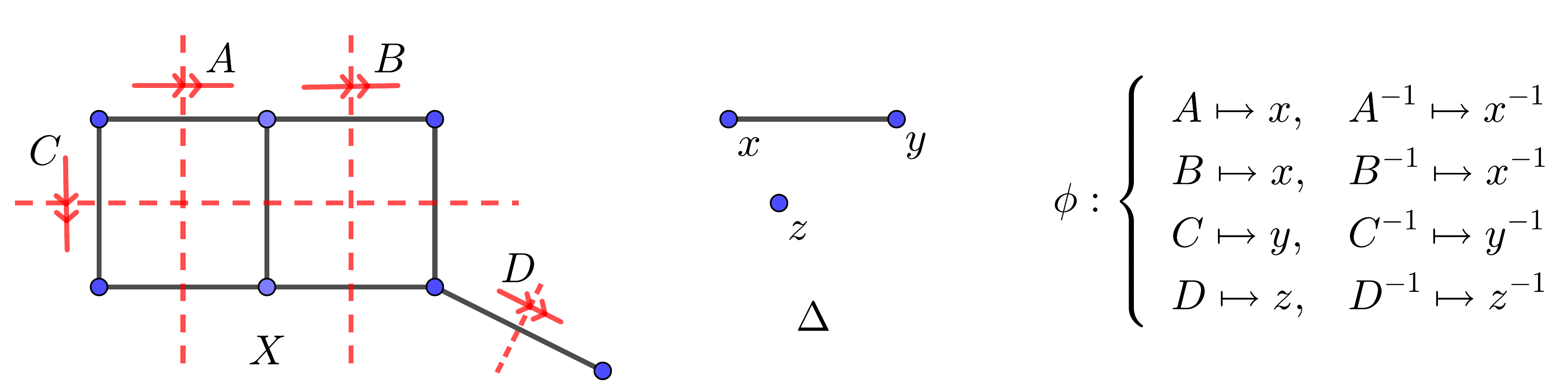}
\caption{Example of a special coloring.}
\label{SpecialColor}
\end{center}
\end{figure}

\noindent
See Figure \ref{SpecialColor} for an example. Essentially, a cube complex admits a special coloring if and only if it is special. We refer to \cite[Lemma 3.2]{SpecialRH} for a precise statement and a sketch of proof. 

\medskip \noindent
From now on, we fix a (not necessarily compact) special cube complex $X$ endowed with a special coloring $(\Delta,\phi)$. 

\medskip \noindent
Given a vertex $x_0 \in X$, a word $w=J_1 \cdots J_r$, where $J_1, \ldots, J_r \in V(\Delta) \sqcup V(\Delta)^{-1}$ are colors, is \emph{$x_0$-legal} if there exists a path $\gamma$ in $X$ starting from $x_0$ such that the oriented hyperplanes it crosses have colors $J_1, \ldots, J_r$ respectively. We say that the path $\gamma$ \emph{represents} the word $w$. 

\begin{fact}\label{fact:uniquepath}\emph{\cite[Fact 3.3]{SpecialRH}}
An $x_0$-legal word is represented by a unique path in $X$. 
\end{fact}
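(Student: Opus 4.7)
The plan is a straightforward induction on the length $r$ of $w = J_1 \cdots J_r$, with the crux being the third bullet of Definition \ref{def:specialcolor}.

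For $r = 0$ the only representing path is the constant path at $x_0$, so there is nothing to prove. Suppose now the statement holds for $x_0$-legal words of length less than $r$, and let $\gamma, \gamma'$ be two paths from $x_0$ both representing $w$. Their initial subpaths of length $r-1$ represent the $x_0$-legal word $J_1 \cdots J_{r-1}$, so by the inductive hypothesis they coincide and end at a common vertex $v$. It then suffices to show that the final edges $e$ of $\gamma$ and $e'$ of $\gamma'$ coincide.

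Both $e$ and $e'$ leave $v$, and their outward orientations are dual to oriented hyperplanes of color $J_r$. Since $X$ is special, two distinct edges incident to a common vertex cannot be dual to the same hyperplane (that would immediately produce a self-intersection or self-osculation, which is forbidden). Hence if $e \neq e'$, the outgoing oriented hyperplanes associated with $e$ and $e'$ are two distinct oriented hyperplanes adjacent to $v$. But then the third bullet of Definition \ref{def:specialcolor} forces their colors to differ, contradicting the fact that both are equal to $J_r$. Therefore $e = e'$, closing the induction.

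The proof has essentially no obstacle: the whole content of the fact is the observation that, by design, at any vertex the coloring of outgoing oriented hyperplanes is injective, so reading the word $w$ at $x_0$ determines the sequence of edges to follow one step at a time.
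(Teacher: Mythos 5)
Your proof is correct, and it is essentially the argument behind the cited Fact 3.3 of \cite{SpecialRH} (the present paper only quotes the statement without reproving it): the path is determined edge by edge because, at any vertex, distinct outgoing oriented edges are dual to distinct oriented hyperplanes (no self-intersection or self-osculation), which by the third bullet of Definition \ref{def:specialcolor} must then carry distinct colors. Your handling of the degenerate case where two edges at a vertex might be dual to the same hyperplane is exactly the point that needs the specialness hypothesis, and you address it correctly.
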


\noindent
The previous fact allows us to define the \emph{terminus} of an $x_0$-legal word $w$, denoted by $t(w)$, as the ending point of the unique path representing $w$. 

\medskip \noindent
Set $\mathcal{L}(X) = \{ x \text{-legal words} \mid x \in X\}$ the set of all legal words. (If $x_1,x_2 \in X$ are two distinct points, we consider the empty $x_1$-legal and $x_2$-legal words as distinct.) We consider the equivalence relation $\sim$ on $\mathcal{L}(X)$ generated by the following transformations:
\begin{description}
	\item[(cancellation)] if a legal word contains $JJ^{-1}$ or $J^{-1}J$, remove this subword;
	\item[(insertion)] given a color $J$, insert $JJ^{-1}$ or $J^{-1}J$ as a subword of a legal word;
	\item[(commutation)] if a legal word contains $J_1J_2$ where $J_1,J_2$ are two adjacent colors, replace this subword with $J_2 J_1$. 
\end{description}
So two $x$-legal words $w_1,w_2$ are equivalent with respect to $\sim$ if there exists a sequence of $x$-legal words 
$$m_1=w_1, \ m_2, \ldots, m_{r-1}, \ m_r=w_2$$ 
such that $m_{i+1}$ is obtained from $m_i$ by a cancellation, an insertion or a commutation for every $1 \leq i \leq r-1$. Define $\mathcal{D}(X)= \mathcal{L}(X)/ \sim$ as a set of \emph{diagrams}. The following observation allows us (in particular) to define the \emph{terminus} of a diagram as the terminus of one of the legal words representing it.

\begin{fact}\label{fact:transformations}\emph{\cite[Fact 3.4]{SpecialRH}}
Let $w'$ be an $x_0$-legal word obtained from another $x_0$-legal word $w$ by a cancellation / an insertion / a commutation. If $\gamma',\gamma$ are paths representing $w',w$ respectively, then $\gamma'$ is obtained from $\gamma$ by removing a backtrack / adding a backtrack / flipping a square. 
\end{fact}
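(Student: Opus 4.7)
The plan is to treat the three elementary transformations one at a time, in each case combining one axiom of a special coloring (Definition \ref{def:specialcolor}) with the uniqueness statement of Fact \ref{fact:uniquepath}.

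For a cancellation $w = w_1 J J^{-1} w_2 \rightsquigarrow w' = w_1 w_2$, I factor $\gamma = \gamma_1 e_1 e_2 \gamma_2$ according to the factorisation of $w$, and let $v$ be the common vertex between $e_1$ and $e_2$. The two oriented edges $e_1^{-1}$ and $e_2$ both start at $v$, and their dual oriented hyperplanes both carry the color $J^{-1}$, so the third axiom of Definition \ref{def:specialcolor} forces these oriented hyperplanes to coincide. Because $X$ is special, its hyperplanes do not self-osculate, and therefore at any vertex there is at most one outgoing oriented edge dual to a given oriented hyperplane. This gives $e_2 = e_1^{-1}$, so that $e_1 e_2$ is a backtrack; removing it produces a path that reads $w'$, and by Fact \ref{fact:uniquepath} this path must be $\gamma'$. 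The insertion case is the same argument reversed: the assumption that $w' = w_1 J J^{-1} w_2$ is legal supplies an edge $e$ at $t(w_1)$ whose dual oriented hyperplane is coloured $J$, and then $\gamma_1 e e^{-1} \gamma_2$ represents $w'$ and is obtained from $\gamma$ by inserting a backtrack.

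For a commutation $w = w_1 J_1 J_2 w_2 \rightsquigarrow w' = w_1 J_2 J_1 w_2$, I again factor $\gamma = \gamma_1 e_1 e_2 \gamma_2$ and consider the edges $e_1^{-1}$ and $e_2$ based at the shared vertex. Their dual oriented hyperplanes carry the adjacent colors $J_1^{-1}$ and $J_2$, so the fourth axiom of Definition \ref{def:specialcolor} produces a square $S$ in $X$ containing $e_1^{-1}$ and $e_2$. The two remaining edges of $S$, opposite in $S$ to $e_1$ and $e_2$, give a path $e_2' e_1'$ with the same endpoints as $e_1 e_2$ and reading the colors $J_2, J_1$; hence $\gamma_1 e_2' e_1' \gamma_2$ represents $w'$. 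By Fact \ref{fact:uniquepath} it equals $\gamma'$, and it differs from $\gamma$ exactly by flipping the square $S$.

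The step that really requires care is the identification $e_2 = e_1^{-1}$ in the cancellation case: the third axiom on its own only yields coincidence of the two oriented hyperplanes at $v$, and one must explicitly invoke the absence of self-osculation in a special cube complex to upgrade this into equality of oriented edges. Once this point is made, the other two cases amount to a direct reading of the relevant axiom.
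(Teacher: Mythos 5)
Your argument is correct and is essentially the intended verification: the paper does not reprove this statement but imports it from \cite[Fact 3.4]{SpecialRH}, and your case-by-case reading of the axioms of Definition \ref{def:specialcolor}, combined with the uniqueness statement of Fact \ref{fact:uniquepath} to identify the constructed path with $\gamma'$, is exactly the standard proof. One small precision on the step you rightly single out: to conclude that a vertex admits at most one outgoing oriented edge dual to a given oriented hyperplane, you must also exclude the case where the two candidate edges span a common square, so you should invoke the absence of self-intersections in addition to the absence of self-osculations; both are part of specialness, so nothing breaks.
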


\noindent
In the sequel, an \emph{$(x,y)$-diagram} will refer to a diagram represented by an $x$-legal word with terminus $y$, or just an \emph{$(x,\ast)$-diagram} if we do not want to specify its terminus. A diagram which is an $(x,x)$-diagram for some $x \in X$ is \emph{spherical}. 

\medskip \noindent
If $w$ is an $x_0$-legal word and $w'$ a $t(w)$-legal word, we define the \emph{concatenation} $w \cdot w'$ as the word $ww'$, which is $x_0$-legal since it is represented by the concatenation $\gamma \gamma'$ where $\gamma, \gamma'$ represent respectively $w,w'$. Because we have the natural identifications
\begin{table}[h]
	\centering
		\begin{tabular}{cccc}
			$\mathcal{L}(X)$ & $\leftrightarrow$ & paths in $X$ & (Fact \ref{fact:uniquepath}) \\ 
			$\sim$ & $\leftrightarrow$ & homotopy with fixed endpoints & (Fact \ref{fact:transformations}, Proposition \ref{prop:cubehomotopy}) 
		\end{tabular}
\end{table}

\noindent
it follows that the concatenation in $\mathcal{L}(X)$ induces a well-defined operation in $\mathcal{D}(X)$, making $\mathcal{D}(X)$ isomorphic to the fundamental groupoid of $X$. As a consequence, if we denote by $M(X)$ the Cayley graph of the groupoid $\mathcal{D}(X)$ with respect to the generating set $V(\Delta) \sqcup V(\Delta)^{-1}$, and, for every $x \in X$, $M(X,x)$ the connected component of $M(X)$ containing the trivial path $\epsilon(x)$ based at $x$, and $\mathcal{D}(X,x)$ the vertex-group of $\mathcal{D}(X)$ based at $\epsilon(x)$, then the previous identifications induce the identifications
\begin{table}[h]
	\centering
		\begin{tabular}{ccc}
			$\mathcal{D}(X)$ & $\leftrightarrow$ & fundamental groupoid of $X$  \\ 
			$\mathcal{D}(X,x)$ & $\leftrightarrow$ & $\pi_1(X,x)$  \\ 
			$M(X,x)$ & $\leftrightarrow$ & universal cover $\widetilde{X}^{(1)}$  \\ 
		\end{tabular}
\end{table}

\noindent
More explicitly, $\mathcal{D}(X,x)$ is the group of $(x,x)$-diagrams endowed with the concatenation, and $M(X,x)$ is the graph whose vertices are the $(x,\ast)$-diagrams and whose edges link two diagrams $w_1$ and $w_2$ if there exists some color $J$ such that $w_2=w_1J$. 

\medskip \noindent 
A more precise description of the identification between $M(X,x)$ and $\widetilde{X}^{(1)}$ is the following:
\begin{table}[h!]
	\centering
		\begin{tabular}{l}
			\hspace{5cm} $M(X,x) \longleftrightarrow \left( \widetilde{X}, \widetilde{x} \right)$ \\ \\ 
			\begin{tabular}{c} $(x,\ast)$-diagram represented \\ by an $x$-legal word $w$ \end{tabular}  $\mapsto$ \begin{tabular}{c} path $\gamma \subset X$ \\ representing $w$ \end{tabular}  $\mapsto$  \begin{tabular}{c} lift $\widetilde{\gamma} \subset \widetilde{X}$ of $\gamma$ \\ starting from $\widetilde{x}$ \end{tabular} $\mapsto$ \begin{tabular}{c} ending \\ point of $\widetilde{\gamma}$ \end{tabular} \\ \\
			\begin{tabular}{c} $(x,\ast)$-diagram represented by the \\ $x$-legal word corresponding to $\gamma$ \end{tabular} $\mapsfrom$ \begin{tabular}{c} image \\ $\gamma \subset X$ of $\gamma$ \end{tabular} $\mapsfrom$ \begin{tabular}{c} path $\widetilde{\gamma} \subset \widetilde{X}$ \\ from $\widetilde{x}$ to $y$ \end{tabular} $\mapsfrom$ $y$
		\end{tabular}
\end{table}

\noindent
It worth noticing that, if we color the oriented edges of $X$ as their dual hyperplanes, then the (oriented) edges of $\widetilde{X}$ are naturally labelled by colors and vertices of $\Delta$, just by considering their images in $X$. A consequence of the previous paragraph is that the generator labelling a given oriented edge of $M(X,x)$ is the same as the color labelling the corresponding edge of $\widetilde{X}$. 

\medskip \noindent
A diagram may be represented by several legal words. Such a word is \emph{reduced} if it has minimal length, i.e., it cannot be shortened by applying a sequence of cancellations, insertions and commutation. It is worth noticing that, in general, a diagram is not represented by a unique reduced legal word, but two such reduced words differ only by some commutations. (For instance, consider the homotopically trivial loop defined by the paths representing two of our reduced legal words, consider a disc diagram of minimal area bounded by this loop, and follow the proof of \cite[Theorem 4.6]{MR1347406}. Alternatively, use the embedding constructed in \cite[Section 4.1]{SpecialRH} (which does not use the present discussion) and conclude by applying the analogous statement which holds in right-angled Artin groups.) As a consequence, we can define the \emph{length} of a diagram as the length of any reduced legal word representing it. It is worth noticing that our length coincides with the length which is associated to the generating set $V(\Delta) \sqcup V(\Delta)^{-1}$ in the groupoid $\mathcal{D}(X)$. The next lemma follows from this observation.

\begin{lemma}
Let $D_1,D_2 \in M(X,x)$ be two $(x,\ast)$-diagrams. If $J_1 \cdots J_n$ is a reduced legal word representing $D_1^{-1}D_2$, then 
$$D_1, \ D_1J_1, \ D_1J_1J_2, \ldots, \ D_1J_1 \cdots J_n$$
is a geodesic from $D_1$ to $D_2$ in $M(X,x)$. Conversely, any geodesic between $D_1$ and $D_2$ arises in this way.
\end{lemma}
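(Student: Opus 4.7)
The plan is to unpack the definitions and observe that the statement is essentially tautological once one identifies paths in the Cayley graph $M(X,x)$ with legal words representing elements of the groupoid $\mathcal{D}(X)$.

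First I would spell out what an edge in $M(X,x)$ is: by construction of the Cayley graph of the groupoid $\mathcal{D}(X)$ with respect to the generating set $V(\Delta) \sqcup V(\Delta)^{-1}$, two $(x,\ast)$-diagrams $D,D'$ are joined by an edge exactly when $D' = DJ$ for some color $J$. Consequently, a combinatorial path
$$D_1 = E_0, \; E_1, \; \ldots, \; E_k = D_2$$
in $M(X,x)$ corresponds to a sequence of colors $K_1, \ldots, K_k$ with $E_i = E_{i-1} K_i$; the word $K_1 \cdots K_k$ is then an $x$-legal word (it is represented by the concatenation of the paths realising each $E_{i-1}^{-1}E_i$) which represents the diagram $D_1^{-1} D_2$. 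In this way, paths in $M(X,x)$ from $D_1$ to $D_2$ are in bijection with legal words representing $D_1^{-1} D_2$, and the combinatorial length of the path equals the length of the word.

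The forward direction is then immediate: the sequence $D_1, D_1 J_1, \ldots, D_1 J_1 \cdots J_n$ is a path in $M(X,x)$ of length $n$ from $D_1$ to $D_2$; since $J_1 \cdots J_n$ is reduced, its length $n$ equals the length of the diagram $D_1^{-1}D_2$ by the definition of length recalled just before the lemma, which is in turn the minimum over all lengths of words representing $D_1^{-1}D_2$, i.e. the minimum length of a path between $D_1$ and $D_2$ in $M(X,x)$. Hence the sequence is a geodesic.

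For the converse, I would take an arbitrary geodesic from $D_1$ to $D_2$, write it as $D_1, D_1 K_1, \ldots, D_1 K_1 \cdots K_n$ via the bijection above, and observe that $K_1 \cdots K_n$ is a legal word representing $D_1^{-1}D_2$ of length $n$ equal to the length of the diagram; hence this word cannot be shortened by cancellations, insertions, or commutations, i.e. it is reduced by definition. The main point that might look like an obstacle, but is in fact built into the preceding discussion, is the identification of the length of a diagram (defined via reduced legal words) with the word-length in the groupoid $\mathcal{D}(X)$ relative to the generating set $V(\Delta) \sqcup V(\Delta)^{-1}$; once this is granted, the lemma reduces to a standard property of Cayley graphs of groupoids.
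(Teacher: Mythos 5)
Your proof is correct and is essentially the paper's own argument: the paper derives this lemma precisely from the observation that the length of a diagram coincides with the word-length in the groupoid $\mathcal{D}(X)$ with respect to $V(\Delta)\sqcup V(\Delta)^{-1}$, which is exactly the identification you make explicit. The only cosmetic point is that the word $K_1\cdots K_k$ read along a path is legal based at the terminus of $D_1$ rather than at $x$ (it represents the $(t(D_1),t(D_2))$-diagram $D_1^{-1}D_2$), but this does not affect the argument.
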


\noindent
Essentially by construction, one has:

\begin{fact}\label{fact:diagvsRAAG}
\emph{\cite[Fact 4.3]{SpecialRH}}
Let $X$ be a special cube complex, $x \in X$ a basepoint and $(\phi,\Delta)$ a special coloring of $X$. Two $x$-legal words of colors are equal in $\mathcal{D}(X,x)$ if and only if they are equal in the right-angled Artin group $A(\Delta)$. 
\end{fact}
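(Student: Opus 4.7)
The equivalence relation $\sim$ on $x$-legal words is generated by three elementary moves: cancellations of a subword $JJ^{-1}$ or $J^{-1}J$, insertions of the same, and commutations $J_1 J_2 \leftrightarrow J_2 J_1$ when $J_1,J_2$ are adjacent colors. Each of these corresponds to a defining relation of the right-angled Artin group $A(\Delta)$, so two $\sim$-equivalent $x$-legal words represent the same element of $A(\Delta)$. This settles the ``only if'' direction.

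For the converse, the main input is the following lemma: \emph{if $w = J_1 \cdots J_n$ is $x$-legal and $J_i,J_{i+1}$ are adjacent colors, then the word $w' = J_1 \cdots J_{i-1} J_{i+1} J_i J_{i+2} \cdots J_n$ is also $x$-legal.} Let $\gamma$ be the unique path representing $w$ (Fact \ref{fact:uniquepath}), and let $x_i$ be the vertex visited by $\gamma$ between its $i$-th and $(i{+}1)$-th edges $e_i$ and $e_{i+1}$. Then the oriented edges $e_i^{-1}$ and $e_{i+1}$ both start at $x_i$ and their dual oriented hyperplanes carry the adjacent colors $J_i^{-1}$ and $J_{i+1}$, so the fourth axiom of a special coloring forces them to span a square $Q$. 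Replacing $e_i e_{i+1}$ by the two edges of $Q$ opposite to them produces a path $\gamma'$ representing $w'$, which is therefore $x$-legal; note that $\gamma'$ is obtained from $\gamma$ by a single square-flip in the sense of Proposition \ref{prop:cubehomotopy}.

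To finish, I invoke the standard normal form theorem for right-angled Artin groups: any word over $V(\Delta) \sqcup V(\Delta)^{-1}$ can be reduced to a word of minimal length (its \emph{reduced form}) by successive commutations of adjacent letters and cancellations of adjacent inverse pairs, and any two reduced forms of a given element of $A(\Delta)$ differ by a sequence of commutations alone. Given two $x$-legal words $w_1, w_2$ that agree in $A(\Delta)$, reduce each to a reduced form $w_j^{\mathrm{red}}$ using only commutations and cancellations; cancellations trivially preserve legality and commutations preserve it by the lemma, so $w_j \sim w_j^{\mathrm{red}}$ in $\mathcal{D}(X,x)$ for $j=1,2$. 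The reduced forms represent the same element of $A(\Delta)$, hence differ only by commutations; by the lemma this chain of commutations lifts to $\sim$-equivalences in $\mathcal{D}(X,x)$, giving $w_1^{\mathrm{red}} \sim w_2^{\mathrm{red}}$ and therefore $w_1 \sim w_2$, as required. The only substantive step is the lemma, and it is exactly the content for which the fourth axiom of a special coloring was introduced, so it falls out immediately; everything else is bookkeeping on top of the RAAG normal form.
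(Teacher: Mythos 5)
Your argument is correct and is essentially the intended one: the paper imports this statement from \cite{SpecialRH} without reproving it, but the machinery it quotes alongside it shows the same route, namely that commutations and cancellations preserve legality (your key lemma is exactly Lemma \ref{lem:commutationlegal}, proved via the fourth axiom of special colorings just as you do) combined with the standard normal form theorem for $A(\Delta)$, which guarantees that insertions are never needed to connect two words representing the same element. The only cosmetic remark is that the ``triviality'' of cancellation preserving legality in fact uses the third axiom (at most one oriented edge of a given color at each vertex, so the two relevant edges form a genuine backtrack), but this is already covered by Lemma \ref{lem:commutationlegal}.
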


\noindent
As an immediate consequence:

\begin{thm}\label{thm:embedRAAG}
\emph{\cite[Theorem 4.1]{SpecialRH}}
Let $X$ be a special cube complex and $x \in X$ a basepoint. If $(\phi,\Delta)$ is a special coloring of $X$, then the canonical map $\mathcal{D}(X,x) \to A(\Delta)$ induces an injective morphism. In particular, the fundamental group of $X$ embeds into the right-angled Artin group $A(\Delta)$. 
\end{thm}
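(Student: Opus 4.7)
The plan is to extract Theorem \ref{thm:embedRAAG} directly from Fact \ref{fact:diagvsRAAG} together with the identification $\mathcal{D}(X,x) \cong \pi_1(X,x)$ recorded in the preliminaries. The proof splits into three pieces: (a) define a canonical map $\Phi : \mathcal{D}(X,x) \to A(\Delta)$ and show it is well-defined, (b) verify it is a group homomorphism, (c) deduce injectivity from Fact \ref{fact:diagvsRAAG}.

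For (a), the plan is as follows. Every element of $\mathcal{D}(X,x)$ is represented by some $x$-legal word $w = J_1 \cdots J_n$ whose letters lie in $V(\Delta) \sqcup V(\Delta)^{-1}$, which is precisely the standard generating alphabet of $A(\Delta)$. Send the diagram $[w]$ to the element $\overline{w} \in A(\Delta)$ obtained by reading $w$ as a word in $A(\Delta)$. To see that this does not depend on the choice of representative, I would recall that the equivalence relation $\sim$ defining $\mathcal{D}(X)$ is generated by three moves: cancellation of $JJ^{-1}$, insertion of $JJ^{-1}$, and commutation $J_1 J_2 \leftrightarrow J_2 J_1$ when the colors $J_1, J_2$ are adjacent in $\Delta$. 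Each of these moves is a defining relation of $A(\Delta)$: the first two are the group-theoretic relations and the third is the commutation between generators associated to an edge of $\Delta$. Hence $w \sim w'$ implies $\overline{w} = \overline{w'}$, so $\Phi$ is well-defined.

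For (b), concatenation of legal words represents the product in $\mathcal{D}(X,x)$ and obviously corresponds to multiplication of the corresponding words in $A(\Delta)$; thus $\Phi$ is a group homomorphism. For (c), suppose $D \in \mathcal{D}(X,x)$ satisfies $\Phi(D) = 1$ in $A(\Delta)$. Choosing any representative $x$-legal word $w$ of $D$, this means $\overline{w}$ equals the trivial word in $A(\Delta)$. Applying Fact \ref{fact:diagvsRAAG} to the pair $(w, \varnothing)$ of $x$-legal words, we conclude that $w$ also equals the empty $x$-legal word in $\mathcal{D}(X,x)$, so $D$ is the identity. Therefore $\Phi$ is injective.

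Finally, under the identification $\mathcal{D}(X,x) \leftrightarrow \pi_1(X,x)$ from Section \ref{section:formalism} (coming from $\mathcal{D}(X)$ being the fundamental groupoid of $X$), the embedding $\Phi$ restricts to an injective homomorphism $\pi_1(X,x) \hookrightarrow A(\Delta)$, giving the ``in particular'' statement. The only subtle point is really hidden in Fact \ref{fact:diagvsRAAG}, which is being used as a black box; once that is granted, the rest is bookkeeping about the equivalence relation $\sim$ matching the relations of $A(\Delta)$.
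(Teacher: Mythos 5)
Your argument is correct and matches the paper's intent exactly: the paper presents this theorem as an ``immediate consequence'' of Fact \ref{fact:diagvsRAAG}, and your three steps (well-definedness because the generating moves of $\sim$ are precisely relations of $A(\Delta)$, the homomorphism property from concatenation, and injectivity from the Fact applied to $w$ and the empty $x$-legal word) are just the bookkeeping the paper leaves implicit. Nothing is missing.
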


\noindent
Let us also record the next lemma for future use.

\begin{lemma}\label{lem:commutationlegal}
\emph{\cite[Lemma 4.2]{SpecialRH}}
If $w'$ is a word obtained from an $x$-legal word $w$ by a commutation or a cancellation, then $w'$ is again an $x$-legal word.
\end{lemma}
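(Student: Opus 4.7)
My plan is to start with a path $\gamma$ in $X$ representing the $x$-legal word $w$ (which exists by definition of legality) and, in each of the two cases, to produce a new path $\gamma'$ starting at $x$ whose sequence of dual oriented hyperplane colours realises $w'$. The two cases draw on different axioms of a special coloring.

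For the cancellation case, I would suppose that $w$ contains the subword $JJ^{-1}$ (the case $J^{-1}J$ is symmetric), and I would let $e_i,e_{i+1}$ be the corresponding consecutive edges of $\gamma$, with common endpoint $v_i$. I would consider the two oriented edges leaving $v_i$: the reversal of $e_i$ and the edge $e_{i+1}$. Both of these have a dual oriented hyperplane of colour $J^{-1}$, so the third axiom of a special coloring forces these dual oriented hyperplanes to be the same. It then remains to show that the two oriented edges themselves coincide. I would observe that if they were distinct, the absence of self-osculation in $X$ would force them to lie in a common square; but two edges of a square which share a vertex are adjacent in the square and hence dual to distinct hyperplanes, a contradiction. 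Consequently $\gamma$ backtracks at $v_i$, and removing this backtrack yields a path representing $w'$.

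For the commutation case, I would suppose that $w$ contains the subword $J_1 J_2$ with $J_1$ and $J_2$ labelling adjacent vertices of $\Delta$. With the same notation, the two oriented edges leaving $v_i$ now have dual oriented hyperplanes of colours $J_1^{-1}$ and $J_2$, whose underlying vertices of $\Delta$ are adjacent. The fourth axiom of a special coloring then yields a square $S$ spanned by these two edges. I would flip $S$ in $\gamma$, replacing the subpath $e_i e_{i+1}$ by the other pair of edges of $S$; since parallel edges of a square are dual to the same hyperplane and, when consistently oriented, carry the same colour, the resulting path represents $w'$.

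The main technical point of care is the argument in the cancellation step that two distinct oriented edges sharing an initial vertex cannot be dual to the same oriented hyperplane in a special cube complex; once this is settled, both cases reduce to a direct translation between legal words and the elementary path transformations of Proposition \ref{prop:cubehomotopy}.
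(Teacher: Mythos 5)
Your proof is correct. Note that the paper does not prove this lemma itself but imports it as \cite[Lemma 4.2]{SpecialRH}; your argument is the expected one, and both cases are handled properly: for cancellation you correctly combine the third coloring axiom (forcing the reversed edge and the next edge to be dual to the same oriented hyperplane) with the absence of self-osculation and self-intersection to force an actual backtrack, and for commutation the fourth axiom produces the square whose parallel sides carry the same colors, so the flipped path realises $w'$.
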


\section{Graph braid groups and their cubical geometry}\label{section:gbgcubicalgeom}

\noindent
This first section is dedicated to basic definitions and properties of graph braid groups, essentially following \cite{GraphBraidGroups}, and to their cubical geometry. 

\begin{definition}
Let $\Gamma$ be a one-dimensional CW-complex and $n \geq 1$ an integer. The \emph{topological configuration space of $n$ points in $\Gamma$} is $C_n^{\mathrm{top}}(\Gamma)=\Gamma^n / D$ where $D= \{ (x_1, \ldots, x_n) \in \Gamma^n \mid x_i=x_j \ \text{for some $i \neq j$} \}$. The \emph{$n$th pure braid group of $\Gamma$ based at $\ast$}, denoted by $P_n(\Gamma, \ast)$, is the fundamental group of $C_n^{\mathrm{top}}(\Gamma)$ based at $\ast$. The symmetric group $\mathfrak{S}_n$ acts freely on $C_n^{\mathrm{top}}(\Gamma)$ by permuting the coordinates. Let $UC_n^{\mathrm{top}}(\Gamma)$ denote the quotient $C_n^{\mathrm{top}}(\Gamma)/ \mathfrak{S}_n$. The \emph{$n$th braid group of $\Gamma$ based at $\ast$}, denoted by $B_n(\Gamma, \ast)$, is the fundamental group of $UC_n^{\mathrm{top}}(\Gamma)$ based at $\ast$. 
\end{definition}

\noindent
A path in $C_n^{\mathrm{top}}(\Gamma)$ or in $UC_n^{\mathrm{top}}(\Gamma)$ can be thought of as moving continuously and without collision $n$ particles in $\Gamma$. It may be interesting to move the particles discretely, i.e., from a vertex to an adjacent vertex, in order to get a combinatorial model of the configuration space. More precisely:

\begin{definition}
Let $\Gamma$ be a one-dimensional CW-complex and $n \geq 1$ an integer. The \emph{combinatorial configuration space of $n$ points in $\Gamma$}, denoted by $C_n(\Gamma)$, is the subcomplex of $\Gamma^n$ containing all the cubes $\sigma_1 \times \cdots \times \sigma_n$ (where the $\sigma_i$'s are one-cells of $\Gamma$) such that, for every distinct $1 \leq i,j \leq n$, $\sigma_i$ and $\sigma_j$ do not share an endpoint. The symmetric group $\mathfrak{S}_n$ acts freely on $C_n(\Gamma)$ by permuting the coordinates. Let $UC_n(\Gamma)$ denote the quotient $C_n(\Gamma)/ \mathfrak{S}_n$. 
\end{definition}

\noindent
It is worth noticing that $UC_n^{\mathrm{top}}(\Gamma)$ depends only on the topological type of $\Gamma$ whereas $UC_n(\Gamma)$ depends heavily on the CW-structure of $\Gamma$. For instance, if $\Gamma$ is a circle with two vertices, then $UC_2^{\mathrm{top}}(\Gamma)$ is an annulus whereas $UC_2(\Gamma)$ is a single vertex. However, it was proved in \cite{GraphBraidGroups, PrueScrimshaw} that $UC_n^{\mathrm{top}}(\Gamma)$ and $UC_n(\Gamma)$ have the same homotopy type if the ``arcs'' of $\Gamma$ are sufficiently subdivided. More precisely:

\begin{prop}[\cite{PrueScrimshaw}]\label{prop:deformretract}
Let $n \geq 1$ be an integer and $\Gamma$ a one-dimensional CW-complex with at least $n$ vertices. If
\begin{itemize}
	\item each edge-path in $\Gamma$ connecting distinct vertices of degree at least three has length at least $n-1$;
	\item each homotopically non-trivial edge-path connecting a vertex to itself has length at least $n+1$,
\end{itemize}
then $UC_n^{\mathrm{top}}(\Gamma)$ deformation retracts onto $UC_n(\Gamma)$. 
\end{prop}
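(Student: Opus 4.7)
The plan is to construct a strong deformation retraction from $UC_n^{\mathrm{top}}(\Gamma)$ onto $UC_n(\Gamma)$. It is cleanest to work first in the ordered setting: the symmetric group $\mathfrak{S}_n$ acts freely on both $C_n^{\mathrm{top}}(\Gamma)$ and $C_n(\Gamma)$ (on the latter because cubes in $C_n(\Gamma)$ are products of pairwise non-adjacent one-cells, so distinct particles never share an edge), hence any $\mathfrak{S}_n$-equivariant strong deformation retraction $C_n^{\mathrm{top}}(\Gamma) \to C_n(\Gamma)$ descends to the desired retraction on the quotients.

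To build this equivariant retraction, I would exploit the product CW-structure on $\Gamma^n$. The subcomplex $C_n(\Gamma)$ consists precisely of those cubes $\sigma_1 \times \cdots \times \sigma_n$ whose factors are pairwise non-adjacent closed one-cells of $\Gamma$, and the remaining cubes, intersected with $C_n^{\mathrm{top}}(\Gamma)$ (that is, after removing the fat diagonal), form the region that must be pushed into $C_n(\Gamma)$. For each such \emph{bad} cube I would define, in an $\mathfrak{S}_n$-equivariant way, a target assignment sending every configuration $\mathbf{x} = (x_1,\ldots,x_n)$ to a nearby discrete configuration $\mathbf{x}^\ast \in C_n(\Gamma)$, and then take $H(\mathbf{x},t)$ to move each particle $x_i$ continuously toward its target $x_i^\ast$ along a short edge-path in $\Gamma$. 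The target is produced by a \emph{spreading} procedure: particles clustered near a common vertex $v$ are distributed, one per incident edge-path, to the nearest safe vertex along each path, using a rule that depends only on positions and is therefore automatically $\mathfrak{S}_n$-equivariant.

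The two length hypotheses are precisely what make this discretisation well defined and collision-free. The assumption that every edge-path between distinct vertices of degree at least three has length at least $n-1$ ensures that whenever particles converge to a high-valence vertex $v$, the incident paths leading to the next vertex of degree at least three contain enough interior degree-two vertices to park the overflow on pairwise non-adjacent edges; the assumption on homotopically nontrivial loops of length at least $n+1$ plays the analogous role around each embedded cycle, ensuring that $n$ particles can always be separated there. A pigeonhole check, case by case on how many particles cluster around a given vertex or short loop, verifies that the interpolation stays in $C_n^{\mathrm{top}}(\Gamma)$ throughout $t \in [0,1]$.

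The main obstacle will be verifying that the spreading rule glues continuously and equivariantly across the boundaries between cubes, where several a priori distinct targets may be admissible. This is a combinatorial bookkeeping exercise that, following the framework of \cite{PrueScrimshaw} refining Abrams' original argument, proceeds by induction on the cubes of $\Gamma^n$ and on the number of particles involved in each local cluster. Once this is set up, the two length hypotheses turn the required no-collision estimates into strict inequalities, and the homotopy descends through the free $\mathfrak{S}_n$-action to the strong deformation retraction $UC_n^{\mathrm{top}}(\Gamma) \to UC_n(\Gamma)$ claimed by the proposition.
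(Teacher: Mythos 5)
This proposition is not proved in the paper at all: it is imported wholesale from \cite{PrueScrimshaw} (refining Abrams' thesis), so there is no in-paper argument to compare against. Judged on its own terms, your proposal is a reasonable description of the known strategy, but it is an outline rather than a proof. The entire mathematical content of the statement lives in the step you defer: defining the ``spreading'' assignment $\mathbf{x} \mapsto \mathbf{x}^\ast$ precisely, and verifying that the resulting homotopy is continuous across the strata of $\Gamma^n$, stays inside $C_n^{\mathrm{top}}(\Gamma)$ for all $t$, and is $\mathfrak{S}_n$-equivariant. You name this as ``a combinatorial bookkeeping exercise'' and point back to \cite{PrueScrimshaw} for how to do it, which is circular: that bookkeeping is exactly what Abrams and then Prue--Scrimshaw spend their papers on, and the Prue--Scrimshaw paper exists precisely because the details of Abrams' original retraction were found to be incomplete. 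A ``rule that depends only on positions'' near a high-valence vertex is not automatically continuous --- which particle is sent down which incident edge can jump discontinuously as particles cross each other's former positions or as a particle passes through the vertex itself --- and resolving this is the crux, not a routine check.

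A second, more specific gap: your pigeonhole heuristic does not engage with the actual bounds in the hypothesis. The statement requires only length $n-1$ between essential vertices (not $n+1$, which is Abrams' original and easier condition), and it is exactly this sharper bound that demands a finer argument about how many particles can simultaneously need parking on a given arc. Your sketch of ``enough interior degree-two vertices to park the overflow'' would most naturally justify the bound $n+1$ in both cases; you would need to explain why $n-1$ suffices for the segments between essential vertices, for instance by observing that the two endpoints of such a segment are themselves available as discrete positions and that at most $n$ particles ever occupy the closed segment. As written, the proposal is a plausible plan whose execution is entirely outstanding.
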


\noindent
An equivalent, but more direct, description of the cube complex $UC_n(\Gamma)$ is the following:
\begin{itemize}
	\item the vertices of $UC_n(\Gamma)$ are the subsets of $\Gamma^{(0)}$ of cardinality $n$;
	\item the edges of $UC_n(\Gamma)$ link two subsets $S_1$ and $S_2$ if the symmetric difference $S_1 \Delta S_2$ is a pair of adjacent vertices (so an edge of $UC_n(\Gamma)$ is naturally labelled by a one-cell of $\Gamma$);
	\item $n$ edges sharing a common endpoint generate an $n$-cube if the one-cells labelling them are pairwise disjoint.
\end{itemize}
This description implies easily that the link of every vertex of $UC_n(\Gamma)$ is a simplicial flag complex, which implies, as noticed in \cite{GraphBraidGroups}, that

\begin{prop}\label{prop:UCNPC}
Let $n \geq 1$ be an integer and $\Gamma$ a one-dimensional CW-complex containing more than $n$ vertices. Then $UC_n(\Gamma)$ is a disjoint union of nonpositively-curved cube complex. 
\end{prop}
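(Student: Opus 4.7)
The plan is to invoke Gromov's classical characterization of nonpositive curvature for cube complexes: a cube complex is nonpositively curved if and only if the link of each vertex is a flag simplicial complex. The combinatorial description displayed just before the statement already exhibits $UC_n(\Gamma)$ as a cube complex (vertices, edges, and cubes are specified), so the entire task reduces to verifying the flag condition at every vertex $S \in UC_n(\Gamma)$.

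Fix $S = \{v_1, \ldots, v_n\}$. According to the description, an edge of $UC_n(\Gamma)$ incident to $S$ is labelled by a one-cell $e \subset \Gamma$ of the form $e = \{v_i, w\}$ with $v_i \in S$ and $w \notin S$, so the vertices of $\link(S)$ are canonically identified with such edges of $\Gamma$. Two distinct such one-cells $e$ and $e'$ span an edge of $\link(S)$ precisely when the corresponding edges of $UC_n(\Gamma)$ span a square at $S$, which by the third bullet of the description happens exactly when $e$ and $e'$ have no endpoint in common. In particular, there is at most one edge of $\link(S)$ between any two of its vertices and no loops, so $\link(S)$ is indeed a simplicial complex.

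For the flag condition, suppose $e_1, \ldots, e_k$ are vertices of $\link(S)$ that are pairwise adjacent; translating back, these are $k$ one-cells of $\Gamma$, each incident to some $v_{i_j} \in S$ and to some $w_j \notin S$, which are pairwise disjoint (as one-cells of $\Gamma$). The third bullet of the description then asserts that $e_1, \ldots, e_k$, seen as edges of $UC_n(\Gamma)$ at $S$, generate a $k$-cube; equivalently, they span a $(k-1)$-simplex of $\link(S)$. This is exactly the flag condition, so $\link(S)$ is a flag simplicial complex and Gromov's criterion yields the proposition. I will conclude by noting that $UC_n(\Gamma)$ need not be connected in general, which accounts for the phrasing "disjoint union of nonpositively-curved cube complexes", and that the hypothesis $|\Gamma^{(0)}| > n$ is included to ensure that $UC_n(\Gamma)$ is nonempty with room for nontrivial motion. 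There is no genuine obstacle here: the proposition is a direct consequence of the combinatorial description together with Gromov's flag criterion.
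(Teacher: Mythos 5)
Your proof is correct and follows exactly the route the paper takes: the paper simply asserts that the combinatorial description of $UC_n(\Gamma)$ ``implies easily'' that every vertex link is a simplicial flag complex, and your argument is the honest verification of that claim via Gromov's criterion. Nothing is missing; you have merely written out the details the paper leaves implicit.
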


\noindent
Given a one-dimensional CW-complex $\Gamma$ and an integer $n \geq 1$, we need to fix a basepoint in $UC_n(\Gamma)$, i.e., a subset $S \subset \Gamma^{(0)}$ of cardinality $n$, in order to define a graph braid group $B_n(\Gamma,S)$. However, up to isomorphism, our group depends only on the number vertices that $S$ contains in each connected components of $\Gamma$. Indeed, if $R$ is another subset of $\Gamma^{(0)}$ of cardinality $n$ such that $|R \cap \Lambda|=|S \cap \Lambda|$ for every connected component $\Lambda$ of $\Gamma$, then $R$ and $S$ belong to the same connected component of $UC_n(\Gamma)$, so that $B_n(\Gamma,S)$ and $B_n(\Gamma,R)$ turn out to be isomorphic. As a consequence, if $\Gamma$ is connected, the choice of the basepoint does not matter, so that the graph braid group can be simply denoted by $B_n(\Gamma)$. In fact, according to our next lemma, one can always assume that $\Gamma$ is connected. 

\begin{lemma}\label{lem:product}
Let $\Gamma$ be a one-dimensional CW-complex, $n \geq 2$ an integer and $S \in UC_n(\Gamma)$ an initial configuration. Suppose that $\Gamma= \Gamma_1 \sqcup \Gamma_2$. If our initial configuration $S$ has $p$ particles in $\Gamma_1$ and $q$ particles in $\Gamma_2$, then $B_n(\Gamma,S) \simeq B_p(\Gamma_1, S \cap \Gamma_1) \times B_q(\Gamma_2, S \cap \Gamma_2)$.
\end{lemma}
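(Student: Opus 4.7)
The plan is to identify the connected component of $UC_n^{\mathrm{top}}(\Gamma)$ containing the basepoint $S$ with a product of components of $UC_p^{\mathrm{top}}(\Gamma_1)$ and $UC_q^{\mathrm{top}}(\Gamma_2)$, and then conclude via the standard identity $\pi_1(A \times B) \simeq \pi_1(A) \times \pi_1(B)$. The key topological fact driving everything is that since $\Gamma_1$ and $\Gamma_2$ are disjoint clopen pieces of $\Gamma$, no continuous path can move a single particle from one piece to the other; hence the integer ``number of particles in $\Gamma_1$'' is locally constant on $UC_n^{\mathrm{top}}(\Gamma)$, and the connected component of $S$ lies entirely inside the subspace $UC_n^{\mathrm{top}}(\Gamma)_{p,q}$ of configurations with exactly $p$ points in $\Gamma_1$ and $q$ in $\Gamma_2$.

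Next, I would introduce the map
$$\Phi : UC_n^{\mathrm{top}}(\Gamma)_{p,q} \longrightarrow UC_p^{\mathrm{top}}(\Gamma_1) \times UC_q^{\mathrm{top}}(\Gamma_2), \qquad T \longmapsto (T \cap \Gamma_1,\, T \cap \Gamma_2),$$
whose obvious inverse is $(A,B) \mapsto A \sqcup B$. Both maps are continuous because $\Gamma_1$ and $\Gamma_2$ are clopen in $\Gamma$ (so that the projections $\Gamma \to \Gamma_i$ obtained by intersecting are continuous on the relevant subspaces of $\Gamma^n$), hence $\Phi$ is a homeomorphism and in particular induces a bijection between connected components. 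Under $\Phi$, the basepoint $S$ corresponds to $(S \cap \Gamma_1,\, S \cap \Gamma_2)$, so the connected component of $S$ on the left is sent homeomorphically to the product of the connected components of $S \cap \Gamma_1$ in $UC_p^{\mathrm{top}}(\Gamma_1)$ and $S \cap \Gamma_2$ in $UC_q^{\mathrm{top}}(\Gamma_2)$. Taking fundamental groups based at these points then yields the desired isomorphism $B_n(\Gamma,S) \simeq B_p(\Gamma_1, S \cap \Gamma_1) \times B_q(\Gamma_2, S \cap \Gamma_2)$.

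The argument is essentially topological bookkeeping, and I do not anticipate any serious obstacle. The only mildly delicate point is unwinding how the quotient by $\mathfrak{S}_n$ interacts with the splitting, which is resolved by noting that, at the level of ordered configurations, the preimage $C_n^{\mathrm{top}}(\Gamma)_{p,q}$ of $UC_n^{\mathrm{top}}(\Gamma)_{p,q}$ decomposes into $\binom{n}{p}$ copies of $C_p^{\mathrm{top}}(\Gamma_1) \times C_q^{\mathrm{top}}(\Gamma_2)$ indexed by the choice of which $p$ coordinates lie in $\Gamma_1$, and that $\mathfrak{S}_n$ acts by permuting these copies and restricting to the action of $\mathfrak{S}_p \times \mathfrak{S}_q$ on each; passing to the quotient recovers precisely $UC_p^{\mathrm{top}}(\Gamma_1) \times UC_q^{\mathrm{top}}(\Gamma_2)$, confirming that $\Phi$ is well-defined and bijective.
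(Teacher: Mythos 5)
Your proof is correct and follows essentially the same route as the paper: both arguments rest on the splitting map $T \mapsto (T \cap \Gamma_1, T \cap \Gamma_2)$ identifying the component of $S$ with a product of configuration spaces, followed by $\pi_1(A \times B) \simeq \pi_1(A) \times \pi_1(B)$. The only difference is that you work directly in the topological model $UC_n^{\mathrm{top}}(\Gamma)$ (checking local constancy of the particle count and the interaction with the $\mathfrak{S}_n$-quotient by hand), whereas the paper first passes to the combinatorial model $UC_n(\Gamma)$ via Proposition \ref{prop:deformretract}, where the same decomposition is immediate.
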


\begin{proof}
According to Proposition \ref{prop:deformretract}, up to subdividing $\Gamma$, we may suppose without loss of generality that the graph braid groups $B_n(\Gamma,S)$, $B_p(\Gamma_1,S \cap \Gamma_1)$ and $B_q(\Gamma_2, S \cap \Gamma_2)$ are respectively isomorphic to the fundamental groups of $UC_n(\Gamma,S)$, $UC_p(\Gamma_1, S \cap \Gamma_1)$ and $UC_q(\Gamma_2,S \cap \Gamma_2)$. Notice that the isomorphism
$$\left\{ \begin{array}{ccc} UC_n(\Gamma) & \to & \bigsqcup\limits_{k=0}^n UC_k(\Gamma_1) \times UC_{n-k}(\Gamma_2) \\ R & \mapsto & (R \cap \Gamma_1, R \cap \Gamma_2) \end{array} \right.$$
sends $UC_n(\Gamma,S)$ to $UC_p(\Gamma_1, S \cap \Gamma_1) \times UC_q(\Gamma_2, S \cap \Gamma_2)$. Hence the isomorphisms 
$$\begin{array}{lcl} B_n(\Gamma,S) & = & \pi_1( UC_n(\Gamma),S) \simeq \pi_1(UC_p(\Gamma_1) \times UC_q(\Gamma_2),S) \\ \\ & \simeq & B_p(\Gamma_1, S \cap \Gamma_1) \times B_q(\Gamma_2, S \cap \Gamma_2) \end{array}$$
which concludes our proof.
\end{proof}

\noindent
As a consequence of the previous lemma, since hyperbolic groups, and more generally acylindrically hyperbolic groups, do not split as direct products of infinite groups, one can always suppose that our one-dimensional CW-complex is connected when studying these kinds of groups.

\medskip \noindent
Now, let us focus on the hyperplanes of $UC_n(\Gamma)$. It is worth noticing that an oriented edge $E$ of $UC_n(\Gamma)$ is uniquely determined by the data of an oriented edge $e$ of $\Gamma$ together with a subset $S$ of $\Gamma^{(0)}$ of cardinality $n$ such that $o(e) \in S$ but $t(e) \notin S$: the edge $E$ links $S$ to $(S \backslash \{ o(e) \} ) \cup \{ t(e) \}$. For every such edge $(e,S)$, we denote by $[e,S]$ the hyperplanes dual to it. 

\begin{lemma}
Two oriented edges $(e_1,S_1)$ and $(e_2,S_2)$ of $UC_n(\Gamma)$ are dual to the same oriented hyperplane if and only if $e_1=e_2$ and $S_1 \backslash \{ o(e_1)\}, S_2 \backslash \{ o(e_2) \}$ belong to the same connected component of $C_{n-1}(\Gamma \backslash e )$. 
\end{lemma}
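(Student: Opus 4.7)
The plan is to unravel both sides of the biconditional into the same combinatorial description, relying on the explicit description of the cubes of $UC_n(\Gamma)$ recalled above.

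First, I would describe the squares of $UC_n(\Gamma)$ concretely. A square at a vertex $R \in UC_n(\Gamma)$ is determined by an unordered pair $\{a,b\}$ of disjoint one-cells of $\Gamma$ with $o(a),o(b) \in R$ and $t(a),t(b) \notin R$; its two opposite oriented sides labelled by $a$ are $(a,R)$ and $(a,(R\setminus\{o(b)\})\cup\{t(b)\})$, and they carry the same orientation of $a$. The moral is: \emph{opposite sides of a square of $UC_n(\Gamma)$ share the same oriented edge of $\Gamma$ as label, and one is obtained from the other by moving a single particle of $R\setminus\{o(a)\}$ along an edge of $\Gamma$ disjoint from $a$.}

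For the forward direction, I would unfold the hypothesis that $(e_1,S_1)$ and $(e_2,S_2)$ lie in the same oriented hyperplane as a finite chain of oriented edges $(f_0,R_0),\ldots,(f_k,R_k)$ in which consecutive terms are opposite sides of a square. The observation above forces all $f_i$ to coincide with some $e$, whence $e_1=e_2=e$, and identifies each transition $R_i\setminus\{o(e)\} \to R_{i+1}\setminus\{o(e)\}$ with a single-particle move along an edge $b_i$ of $\Gamma$ disjoint from $e$, i.e., with both endpoints in $V(\Gamma)\setminus\{o(e),t(e)\}$. That sequence of $(n-1)$-subsets is a path in $C_{n-1}(\Gamma \setminus e)$ from $S_1\setminus\{o(e)\}$ to $S_2\setminus\{o(e)\}$.

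The converse just reverses this translation. Given a path $T_0,T_1,\ldots,T_k$ in $C_{n-1}(\Gamma \setminus e)$ between the two prescribed configurations, each step $T_i \to T_{i+1}$ is realised by an edge $b_i$ of $\Gamma$ disjoint from $e$; so the pair $\{e,b_i\}$ generates a square of $UC_n(\Gamma)$ at the configuration $T_i \cup \{o(e)\}$ whose opposite sides are exactly $(e,T_i\cup\{o(e)\})$ and $(e,T_{i+1}\cup\{o(e)\})$ with matching orientations of $e$. Concatenating these square-flips yields the desired equivalence $(e,S_1) \sim (e,S_2)$. I do not anticipate any real obstacle: the only delicate point is reading the notation $\Gamma \setminus e$ so that the two sides genuinely line up. Namely, $C_{n-1}(\Gamma \setminus e)$ must be interpreted as the configuration space of $n-1$ particles on $\Gamma$ deprived of the \emph{closed} edge $e$ (so that its vertex moves coincide exactly with single-particle moves along one-cells of $\Gamma$ disjoint from $e$, and its vertices automatically avoid $o(e)$ and $t(e)$); with this convention the proof reduces to a transparent dictionary between square-flip sequences in $UC_n(\Gamma)$ and edge-paths in $C_{n-1}(\Gamma \setminus e)$.
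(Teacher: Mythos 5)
Your proposal is correct and follows essentially the same route as the paper: both directions translate between chains of opposite sides of squares in $UC_n(\Gamma)$ and edge-paths in $C_{n-1}(\Gamma\setminus e)$, using the fact that opposite sides of a square carry the same oriented edge of $\Gamma$ and differ by moving one other particle along a disjoint one-cell. Your explicit remark that $\Gamma\setminus e$ must mean $\Gamma$ with the closed edge $e$ removed is a point the paper leaves implicit, but it does not change the argument.
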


\begin{proof}
Suppose that $(e_1,S_1)$ and $(e_2,S_2)$ are dual to the same oriented hyperplane. So there exists a sequence of oriented edges
$$(\epsilon_1, \Sigma_1)= (e_1,S_1), \ (\epsilon_2, \Sigma_2), \ldots, (\epsilon_{n-1}, \Sigma_{n-1}), \ (\epsilon_n,\Sigma_n)= (e_2,S_2)$$
such that, for every $1 \leq i \leq n-1$, the edges $(\epsilon_i, \Sigma_i)$ and $(\epsilon_{i+1}, \Sigma_{i+1})$ are parallel in some square $Q_i$; let $(\eta_i,\Sigma_i)$ denote the edge of $Q_i$ which is adjacent to $(\epsilon_i,\Sigma_i)$ and which contains the starting of $(\epsilon_i,\Sigma_i)$, i.e., $\Sigma_i$. From the definition of the squares in $C_n(\Gamma)$, it follows that $\epsilon_i=\epsilon_{i+1}$, $\epsilon_i \cap \eta_i = \emptyset$ and $\Sigma_{i+1}= \left( \Sigma_i \backslash \{ o(\eta_i) \} \right) \cup \{ t(\eta_i) \}$ for every $1 \leq i \leq n-1$. Therefore, the equalities
$$e_1= \epsilon_1= \epsilon_2 = \cdots \epsilon_{n-1}= \epsilon_n=e_2$$
hold, and the sequence
$$S_1 \backslash \{ o(e_1) \}= \Sigma_1 \backslash \{ o(\epsilon_1) \}, \ \Sigma_2 \backslash \{o(\epsilon_2) \}, \ldots, \Sigma_n \backslash \{ o(\epsilon_n)\}= S_2 \backslash \{ o(e_2) \}$$
defines a path in $C_{n-1}(\Gamma \backslash e)$. 

\medskip \noindent
Conversely, let $e$ be an oriented edge of $\Gamma$ and $S_1,S_2$ two vertices of $C_{n-1}(\Gamma \backslash e)$ which belong to the same connected component. So there exists a path
$$\Sigma_1=S_1, \ \Sigma_2, \ldots, \Sigma_{n-1}, \ \Sigma_n = S_2$$
in $C_{n-1}( \Gamma \backslash e)$. For every $1 \leq i \leq n-1$, there exists an edge $\eta_i$ disjoint from $e$ such that $(\eta_i, \Sigma_i)$ links $\Sigma_i$ to $\Sigma_{i+1}$; notice that, since $\eta_i$ and $e$ are disjoint, the edges $(e,\Sigma_i \cup \{ o(e) \})$ and $(\eta_i, \Sigma_i \cup \{ o(e) \})$ generate a square in $C_n(\Gamma)$, such that $(e,\Sigma_{i+1} \cup \{ o(e) \})$ is the opposite edge of $(e,\Sigma_i \cup \{ o(e) \})$ in that square. From the sequence
$$(e,\Sigma_1 \cup \{o(e) \})=(e,S_1 \cup \{ o(e) \}), \ (e,\Sigma_2 \cup \{ o(e) \}), \ldots, (e, \Sigma_n \cup \{ o(e) \})=(e, S_2 \cup \{ o(e) \}),$$
it follows that the oriented edges $(e,S_1 \cup \{o(e) \})$ and $(e, S_2 \cup \{ o(e) \})$ of $C_n(\Gamma)$ are dual to the same oriented hyperplane, concluding the proof. 
\end{proof}

\noindent
Now, we are ready to define a special coloring of $UC_n(\Gamma)$. For that purpose, we denote by $\Delta$ the graph whose vertices are the edges of $\Gamma$ and whose edges link two disjoint edges of $\Gamma$, and we fix an orientation of the edges of $\Gamma$ in order to identify the set of oriented edges of $\Gamma$ with $E(\Gamma) \sqcup E(\Gamma)^{-1}$.

\begin{prop}\label{prop:UCspecialcoloring}
Let $\Gamma$ be a one-dimensional CW-complex and $n \geq 1$ an integer. If $\phi$ denotes the map $[e,S] \mapsto e$, then $(\Delta, \phi)$ is a special coloring of $UC_n(\Gamma)$. 
\end{prop}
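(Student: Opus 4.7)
My plan is to verify directly the four axioms in Definition \ref{def:specialcolor}. The map $\phi$ is well defined because the preceding lemma shows that whenever two oriented edges $(e_1,S_1)$ and $(e_2,S_2)$ are dual to the same oriented hyperplane we have $e_1=e_2$, so the color depends only on the hyperplane and not on its chosen representative.

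The first two axioms are straightforward. For axiom~(1), reversing the orientation of the hyperplane $[e,S]$ amounts to reversing a dual edge, which sends $(e,S)$ to $(e^{-1},(S\setminus\{o(e)\})\cup\{t(e)\})$, so $\phi([e,S]^{-1})=e^{-1}=\phi([e,S])^{-1}$. For axiom~(2), two transverse oriented hyperplanes admit dual edges which are non-parallel sides of a common square in $UC_n(\Gamma)$; by the explicit description of cubes in $UC_n(\Gamma)$ recalled above, such a square corresponds to two disjoint edges of $\Gamma$, so their colors are joined by an edge in $\Delta$.

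Axiom~(3) requires slightly more care. Suppose that two oriented hyperplanes $J_1=[e,T_1]$ and $J_2=[e,T_2]$ with the same color $e$ are both adjacent to a vertex $S$. Any edge incident to $S$ and dual to $J_i$ has the form $(e,R_i)$ where either $R_i=S$ (forcing $o(e)\in S$ and $t(e)\notin S$), or $(R_i\setminus\{o(e)\})\cup\{t(e)\}=S$ (forcing $t(e)\in S$ and $o(e)\notin S$). Because the two cases are mutually exclusive, all $R_i$ fall in the same case, and in each case $R_i\setminus\{o(e)\}$ equals either $S\setminus\{o(e)\}$ or $S\setminus\{t(e)\}$, independently of $i$. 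The preceding lemma then forces $T_1\setminus\{o(e)\}$ and $T_2\setminus\{o(e)\}$ to lie in the same component of $C_{n-1}(\Gamma\setminus e)$, so that $J_1=J_2$.

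For axiom~(4), consider two oriented edges $(\tilde e_1,S)$ and $(\tilde e_2,S)$ at $S$ whose dual oriented hyperplanes have adjacent colors, meaning $\tilde e_1$ and $\tilde e_2$ are disjoint edges of $\Gamma$. Disjointness implies the endpoints involved are pairwise distinct, so the product cube $\tilde e_1\times\tilde e_2\times\prod_{x\in S\setminus\{o(\tilde e_1),o(\tilde e_2)\}}\{x\}$ lies in $C_n(\Gamma)$, and its image in $UC_n(\Gamma)$ is precisely the square generated by the two given edges. The only mildly delicate verification will be axiom~(3); the remaining axioms are essentially direct unpackings of the cubical structure of $UC_n(\Gamma)$ together with the preceding lemma.
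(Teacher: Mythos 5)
Your proof is correct and follows essentially the same route as the paper: a direct verification of the four axioms of Definition \ref{def:specialcolor} using the explicit cubical description of $UC_n(\Gamma)$ and the preceding lemma on hyperplane duality. Your treatment of axiom (3) (and of well-definedness) is in fact slightly more careful than the paper's, which only considers dual edges starting at the given vertex, but the underlying argument is the same.
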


\begin{proof}
The first point of Definition \ref{def:specialcolor} is clear from the construction of $\phi$. The other three points are also satisfied according the following observations.
\begin{itemize}
	\item If two hyperplanes are transverse, they must cross inside some square. Since two adjacent edges of $C_n(\Gamma)$ which generate a square must be labelled by disjoint edges, it follows that the images of our two hyperplanes under $\phi$ are adjacent in $\Delta$. 
	\item It is clear that two distinct edges of $C_n(\Gamma)$ starting from a common vertex must be labelled by different oriented edges of $\Gamma$. A fortiori, two oriented hyperplanes adjacent to a given vertex have different images under $\phi$.
	\item Consider two oriented edges of $C_n(\Gamma)$ starting from a common vertex such that the images under $\phi$ of the dual hyperplanes are adjacent in $\Delta$. This means that the edges of $\Gamma$ labelling our two edges of $C_n(\Gamma)$ are disjoint. From the definition of $C_n(\Gamma)$, it follows that our edges generate a square.
\end{itemize}
This concludes the proof of our proposition.  
\end{proof}

\begin{remark}
Following the proof of \cite[Lemma 3.2]{SpecialRH}, the combination of Propositions \ref{prop:UCNPC} and \ref{prop:UCspecialcoloring} provides an easy proof of the fact that $UC_n(\Gamma)$ is a disjoint union of special cube complexes, as noticed in \cite{CrispWiest}. 
\end{remark}

\noindent
Fixing a basepoint $S \in UC_n(\Gamma)$, we deduce from Section \ref{section:formalism} the following description of the universal cover $X_n(\Gamma,S)$ of the connected component of $UC_n(\Gamma)$ containing $S$, which we denote by $UC_n(\Gamma,S)$. 

\medskip \noindent
A word of oriented edges $e_1 \cdots e_n$ is $S$-legal if, for every $1 \leq i \leq n$, we have 
$$o(e_i) \in (S \backslash \{ o(e_1), \ldots, o(e_{i-1} \}) \cup \{ t(e_1), \ldots, t(e_{i-1}) \}$$ 
but 
$$t(e_i) \notin (S \backslash \{ o(e_1), \ldots, o(e_{i-1} \}) \cup \{ t(e_1), \ldots, t(e_{i-1}) \}.$$ 
The picture to keep in mind is the following. Start with the configuration $S$. The oriented edge $e_1$ acts on $S$ by moving the particle of $S$ at $o(e_1)$ to $t(e_1)$. Next, apply $e_2$ to this new configuration, and so on. The word $e_1 \cdots e_n$ is $S$-legal if all these operations are well-defined. The terminus of such a word is the final configuration we get. In this context, a cancellation amounts to removing a backtrack in the movement of one particle; an insertion adds such a backtrack; and a commutation switch the order of two independent moves. Therefore, an $(S_1,S_2)$-diagram can be thought of as $n$ particles moving discretely in $\Gamma$ without colliding, from a configuration $S_1$ to a configuration $S_2$, up to the previous operations.

\medskip \noindent
The one-skeleton of $X_n(\Gamma,S)$ can be identified with the graph whose vertices are the $(S,\ast)$-diagrams and whose edges link two diagrams if one can be obtained from the other by right-multiplying by an oriented edge of $\Gamma$. Moreover, the braid group $B_n(\Gamma,S)$ can be identified with the set of $(S,S)$-diagrams endowed with the concatenation. 

\medskip \noindent
As a direct consequence of Theorem \ref{thm:embedRAAG} and Proposition \ref{prop:UCspecialcoloring}, we are able to recover \cite[Theorem 2]{CrispWiest}.

\begin{prop}\label{prop:braidRAAG}
Let $\Gamma$ be one-dimensional CW-complex, $n \geq 1$ an integer and $S \in UC_n(\Gamma)$ a basepoint. The braid group $B_n(\Gamma,S)$ embeds into the right-angled Artin group $A(\Delta)$. 
\end{prop}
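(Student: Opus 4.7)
The plan is essentially to observe that Proposition \ref{prop:braidRAAG} is an immediate corollary of the two main results already set up in the excerpt, namely Theorem \ref{thm:embedRAAG} and Proposition \ref{prop:UCspecialcoloring}. The strategy is to assemble them together and verify that the hypotheses of the embedding theorem are actually met in the graph braid group setting.

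First, I would recall that by Proposition \ref{prop:UCNPC} the cube complex $UC_n(\Gamma)$ is a disjoint union of nonpositively curved cube complexes, so the connected component $UC_n(\Gamma, S)$ containing the basepoint $S$ is an honest nonpositively curved cube complex. By Proposition \ref{prop:UCspecialcoloring}, the pair $(\Delta, \phi)$, where $\Delta$ is the graph whose vertices are the edges of $\Gamma$ with two such vertices joined when the corresponding edges of $\Gamma$ are disjoint, and $\phi : [e,S] \mapsto e$, is a special coloring of $UC_n(\Gamma)$. Restricting $\phi$ to hyperplanes of $UC_n(\Gamma, S)$ still yields a special coloring with values in $\Delta$ (all four axioms of Definition \ref{def:specialcolor} are local conditions on hyperplanes and vertices), so $UC_n(\Gamma, S)$ is a special cube complex with special coloring $(\Delta, \phi)$.

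Next, I would identify $B_n(\Gamma, S)$ with $\pi_1(UC_n(\Gamma, S), S)$. This is immediate from the definition $B_n(\Gamma, S) = \pi_1(UC_n^{\mathrm{top}}(\Gamma), S)$ together with Proposition \ref{prop:deformretract}, since one may always subdivide $\Gamma$ without affecting the graph braid group or the auxiliary graph $\Delta$ in an essential way (additional subdivided edges merely introduce further vertices in $\Delta$, into which $A(\Delta)$ for the original graph obviously embeds). Once this identification is in place, Theorem \ref{thm:embedRAAG} applied to $X = UC_n(\Gamma, S)$ with the special coloring $(\Delta, \phi)$ directly yields an injective homomorphism $B_n(\Gamma, S) = \pi_1(UC_n(\Gamma, S), S) \hookrightarrow A(\Delta)$.

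The main thing to be careful about is not a genuine obstacle but a bookkeeping point: the special coloring in Proposition \ref{prop:UCspecialcoloring} takes its values in the full graph $\Delta$ associated to the (possibly subdivided) $\Gamma$, and one must check that this $\Delta$ is exactly the one stated in Proposition \ref{prop:braidRAAG}. Since subdivision only enlarges $\Delta$ and $A$ is monotone in the defining graph (inclusion of induced subgraphs induces inclusion of the right-angled Artin groups), this causes no issue. With these identifications, the proof reduces to a single sentence invoking Theorem \ref{thm:embedRAAG}.
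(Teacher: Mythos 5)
Your proposal is correct and follows essentially the same route as the paper, which derives Proposition \ref{prop:braidRAAG} verbatim as ``a direct consequence of Theorem \ref{thm:embedRAAG} and Proposition \ref{prop:UCspecialcoloring}.'' One small caveat on your bookkeeping aside: subdivision gives an embedding into $A(\Delta')$ for the \emph{enlarged} graph $\Delta'$, and monotonicity only yields $A(\Delta)\hookrightarrow A(\Delta')$, which is the wrong direction for concluding an embedding into the original $A(\Delta)$ --- but this is immaterial here, since the proposition is understood (as throughout the paper, via Proposition \ref{prop:deformretract}) for a sufficiently subdivided $\Gamma$ with its associated $\Delta$.
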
 

\noindent
It is worth noticing that the graph $\Delta$ does not coincide with the crossing graph of the cube complex $UC_n(\Gamma)$, so that the previous embedding is different from the usual embedding one gets for special groups. In other words, a hyperplane $[e,S]$ of $UC_n(\Gamma)$ is not uniquely determined by the oriented edge $e$ of $\Gamma$. Figure \ref{figure6} provides a one-dimensional CW-complex $\Gamma$, an oriented edge of $\Gamma$, and two configurations $S_1$ and $S_2$, so that the hyperplanes $[e,S_1]$ and $[e,S_2]$ are distinct. 
\begin{figure}
\begin{center}
\includegraphics[trim={0 22cm 35cm 0},clip,scale=0.5]{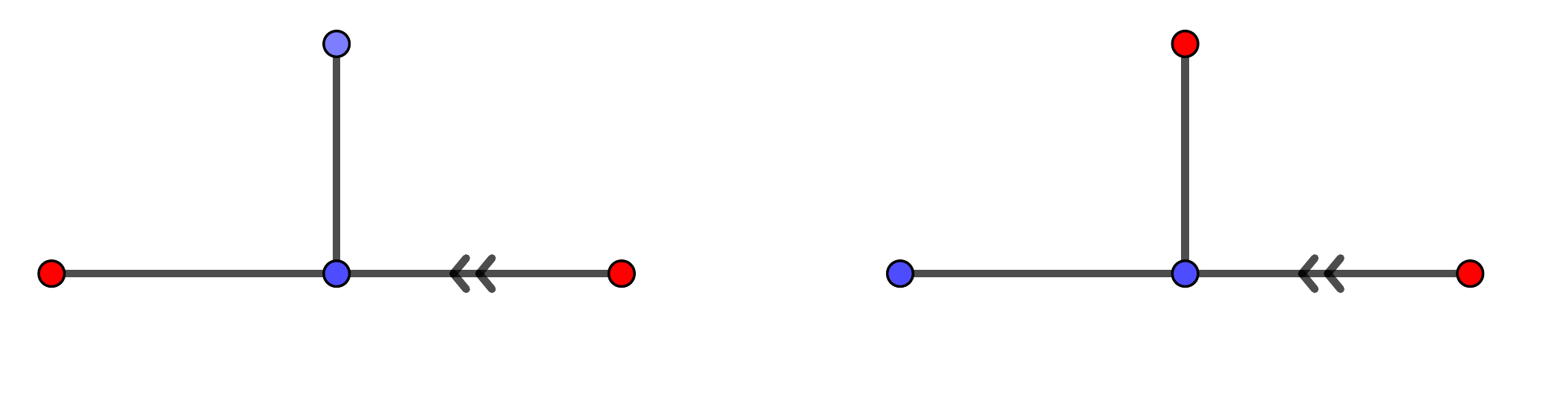}
\caption{Two distinct hyperplanes labelled by the same oriented edge.}
\label{figure6}
\end{center}
\end{figure}

\medskip \noindent
We conclude this section with an elementary observation about embeddings between graph braid groups. See also Question \ref{q:embedding}.

\begin{prop}\label{prop:braidembed}
Let $\Gamma$ be a connected one-dimensional CW-complex and $n \geq 1$ an integer. If $\Lambda$ is a connected induced subgraph of $\Gamma$, then $B_n(\Lambda)$ embeds into $B_n(\Gamma)$. Moreover, if $\Lambda$ is a proper subgraph, then $B_m(\Lambda)$ embeds into $B_n(\Gamma)$ for every $m \leq n$. 
\end{prop}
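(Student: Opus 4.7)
The plan is to realise both embeddings at the level of the combinatorial unordered configuration spaces and to deduce $\pi_1$-injectivity from the standard fact that a locally convex subcomplex of a nonpositively curved cube complex induces an injection on fundamental groups (equivalently, lifts to a convex subcomplex of the universal cover).

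For the first claim, I view $UC_n(\Lambda)$ as a cubical subcomplex of $UC_n(\Gamma)$ via the inclusion $\Lambda^{(0)} \hookrightarrow \Gamma^{(0)}$. The assumption that $\Lambda$ is an induced subgraph is used exactly to ensure that any edge of $UC_n(\Gamma)$ joining two vertices of $UC_n(\Lambda)$ is itself an edge of $UC_n(\Lambda)$: such an edge is labelled by an edge of $\Gamma$ with both endpoints in $\Lambda^{(0)}$, hence by an edge of $\Lambda$. Iterating this observation, if $Q$ is a cube of $UC_n(\Gamma)$ based at some $v \in UC_n(\Lambda)$ whose edges incident to $v$ all lie in $UC_n(\Lambda)$, then the edges of $\Gamma$ labelling those cube-edges are pairwise disjoint edges of $\Lambda$, and every other vertex of $Q$ is obtained from $v$ by moving particles along such $\Lambda$-edges, hence belongs to $UC_n(\Lambda)$. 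This is the local convexity condition, from which the embedding $B_n(\Lambda) \hookrightarrow B_n(\Gamma)$ follows.

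For the ``moreover'' part, assume $\Lambda \subsetneq \Gamma$. Since $\Gamma$ is connected and $\Lambda$ is a proper induced subgraph, $\Gamma$ contains an edge outside $\Lambda$, which can be subdivided as many times as required without altering the isomorphism types of $B_n(\Gamma)$ or $B_m(\Lambda)$; subdividing so that Proposition \ref{prop:deformretract} remains applicable, I can find $n-m$ pairwise non-adjacent vertices $v_1, \ldots, v_{n-m} \in \Gamma^{(0)} \setminus \Lambda^{(0)}$, none of which is adjacent to a vertex of $\Lambda$. I then consider the map $R \mapsto R \cup \{v_1, \ldots, v_{n-m}\}$ from $UC_m(\Lambda)$ to $UC_n(\Gamma)$. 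The non-adjacency conditions force any $\Gamma$-edge leaving a vertex in the image, and staying in the image, to leave every $v_i$ fixed and to be labelled by an edge of $\Lambda$; exactly the same local-convexity verification as before then yields the embedding $B_m(\Lambda) \hookrightarrow B_n(\Gamma)$.

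The only real obstacle is the combinatorial bookkeeping of the subdivision (producing simultaneously enough vertices outside $\Lambda$, arranging that they are non-adjacent to one another and to $\Lambda$, and keeping the subdivision fine enough for Proposition \ref{prop:deformretract} to hold); everything else reduces to the ``locally convex implies $\pi_1$-injective'' principle for nonpositively curved cube complexes.
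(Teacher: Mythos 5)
Your proof is correct and follows essentially the same route as the paper: both realise the embeddings via the natural cubical maps $UC_n(\Lambda) \to UC_n(\Gamma)$ and $R \mapsto R \cup \{v_1,\dots,v_{n-m}\}$ on combinatorial configuration spaces (after a suitable subdivision justified by Proposition \ref{prop:deformretract}), and conclude from the fact that a local isometry between nonpositively curved cube complexes is $\pi_1$-injective. The only difference is that you check local convexity by hand where the paper invokes \cite[Theorem 1(2)]{CrispWiest}; note that your non-adjacency requirements on the $v_i$ are superfluous, since any edge of $UC_n(\Gamma)$ joining two configurations that both contain $\{v_1,\dots,v_{n-m}\}$ automatically fixes those vertices.
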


\begin{proof}
Fix an initial configuration $S \in UC_n(\Gamma)$ which is included in $\Lambda$, and consider the obvious map $UC_n(\Lambda,S) \to UC_n(\Gamma,S)$. By applying \cite[Theorem 1(2)]{CrispWiest}, it follows that this map is a local isometry. Because a local isometry between nonpositively-curved cube complexes turns out to be $\pi_1$-injective, we conclude that $B_n(\Lambda)$ embeds into $B_n(\Gamma)$.

\medskip \noindent
Now, suppose that $\Lambda \subset \Gamma$ is a proper subgraph and fix some integer $m \leq n$. Up to subdividing $\Gamma$, which does not affect the corresponding braid group according to Proposition \ref{prop:deformretract}, we may suppose without loss of generality that $\Gamma$ contains at least $n-m$ vertices which do not belong to $\Lambda$. Fix an initial configuration $S \in UC_n(\Gamma)$ satisfying $\# S \cap \Lambda=m$. As above, apply \cite[Theorem 1(2)]{CrispWiest} to show that the obvious map $UC_m(\Lambda, S \cap \Lambda) \to UC_n(\Gamma,S)$ is a local isometry. A fortiori, the braid group $B_m(\Lambda)$ embeds into $B_n(\Gamma)$. 
\end{proof}

\section{Properties of negative curvature}

\subsection{Gromov hyperbolicity}

\noindent
This section is dedicated to the characterisation of hyperbolic graph braid groups. We begin by introducing several classes of graphs.
\begin{itemize}
	\item A \emph{sun graph} is a graph obtained from a cycle by gluing rays to some vertices.
	\item A \emph{rose graph} is a graph obtained by gluing cycles and rays along a single vertex.
	\item A \emph{pulsar graph} is a graph obtained by gluing cycles along a fixed segment (not reduced to a single vertex) and rays to the endpoints of this segment.
\end{itemize}
See Figure \ref{figure3} for examples. We allow degenerate situations; for instance, a single cycle belongs to our three families of graphs. Our characterisation is the following:
\begin{figure}
\begin{center}
\includegraphics[trim={0 20cm 33.5cm 0},clip,scale=0.5]{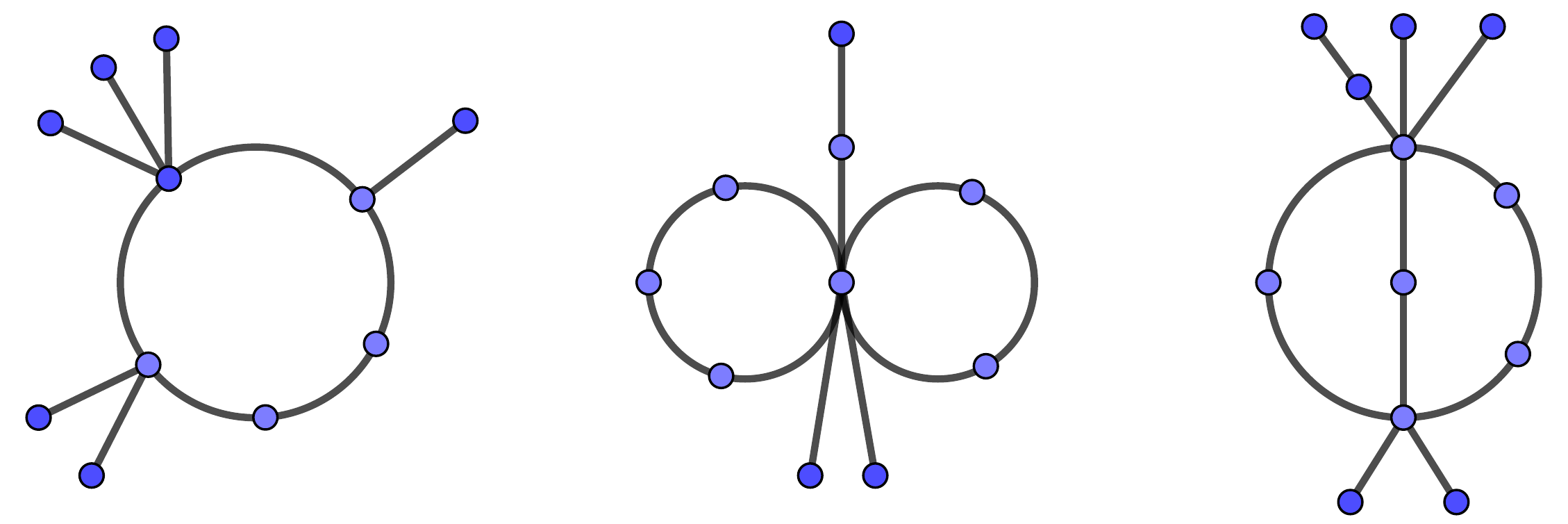}
\caption{From left to right: a sun graph, a rose graph, a pulsar graph.}
\label{figure3}
\end{center}
\end{figure}

\begin{thm}\label{thm:hypbraidgroup}
Let $\Gamma$ be a compact and connected one-dimensional CW-complex. 
\begin{itemize}
	\item The braid group $B_2(\Gamma)$ is hyperbolic if and only if $\Gamma$ does not contain a pair of disjoint induced cycles.
	\item The braid group $B_3(\Gamma)$ is hyperbolic if and only if $\Gamma$ is a tree, or a sun graph, or a rose graph, or a pulsar graph.
	\item For every $n \geq 4$, the braid group $B_n(\Gamma)$ is hyperbolic if and only if $\Gamma$ is a rose graph.
\end{itemize}
\end{thm}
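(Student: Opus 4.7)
The plan is to use the cube-complex structure developed in Section \ref{section:gbgcubicalgeom}: the universal cover $X_n(\Gamma,S)$ of the component of $UC_n(\Gamma)$ containing a basepoint $S$ is a CAT(0) cube complex on which the braid group acts geometrically (after sufficient subdivision, via Proposition \ref{prop:deformretract}). By the Flat Plane Theorem for groups acting properly cocompactly on CAT(0) cube complexes, Gromov hyperbolicity of $B_n(\Gamma)$ is equivalent to the absence of a $\mathbb{Z}^2$ subgroup. The whole argument is therefore organized around producing, or forbidding, $\mathbb{Z}^2$ subgroups.

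For the ``only if'' directions I would construct explicit $\mathbb{Z}^2$ subgroups whenever $\Gamma$ fails the stated combinatorial condition. The basic source of commuting elements is disjointness of trajectories: if two induced cycles of $\Gamma$ are vertex-disjoint, then two distinct particles can circulate around them simultaneously and without interaction, producing commuting, independent loops in the configuration space. This immediately handles $n=2$. For $n=3$ and $n\geq 4$, I would combine this idea with Proposition \ref{prop:braidembed} to reduce to small ``obstruction'' induced subgraphs: any $\Gamma$ that is not a tree, sun, rose or pulsar graph contains a small induced subgraph (e.g.\ a theta graph with a pending edge, two cycles meeting in at most one vertex, a cycle with a vertex of degree $\geq 3$ off the cycle, \textit{etc.}) in which one can explicitly realize two commuting non-trivial loops using three particles; for $n \geq 4$, any non-rose graph contains either a sufficiently long segment joining vertices of degree $\geq 3$, or a cycle separated from the essential structure, which yields $\mathbb{Z}^2$ via four particles.

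For the ``if'' directions I would treat each case separately. When $\Gamma$ is a tree or a rose graph, after the subdivision provided by Proposition \ref{prop:deformretract} the discretized configuration space $UC_n(\Gamma,S)$ deformation retracts onto a graph: at every essential vertex at most one particle can transit at a time, so all cubes of dimension $\geq 2$ can be collapsed, and $B_n(\Gamma)$ is free (this is the classical Abrams--Ghrist observation). For sun and pulsar graphs at $n=3$ the braid group is generally not free, so one must verify the absence of $\mathbb{Z}^2$ directly via the special coloring formalism from Section \ref{section:formalism}. A $\mathbb{Z}^2$ subgroup would provide two spherical diagrams represented by reduced legal words $w_1$ and $w_2$ whose colors are all pairwise adjacent in $\Delta$, i.e.\ correspond to pairwise disjoint edges of $\Gamma$, and which represent independent non-trivial elements of $B_3$ (using Theorem \ref{thm:embedRAAG} and the centralizer structure of right-angled Artin groups). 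One then has to prove, exploiting the sun or pulsar structure, that such pairs cannot exist: any non-trivial closed trajectory of three particles on a sun must use the unique cycle, and on a pulsar must thread the common segment, so two independent closed motions would require two vertex-disjoint induced cycles, which the sun and pulsar structures preclude.

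The main obstacle is exactly this last step. The argument is not formal because $X_3$ is genuinely higher-dimensional and contains many local flats. The key technical point is a normal-form statement for commuting pairs in $B_3(\Gamma,S)$: any two commuting infinite-order elements must admit joint representations by legal words $w_1$, $w_2$ whose supporting edges in $\Gamma$ lie in disjoint subgraphs. Producing such a normal form requires the careful manipulation of reduced legal words via the cancellations, insertions and commutations of Lemma \ref{lem:commutationlegal}, together with the embedding into $A(\Delta)$ from Theorem \ref{thm:embedRAAG}, so that commutation of diagrams can be read off from disjointness of supports. Once this is achieved, the sun/pulsar analysis becomes a combinatorial exercise on the available cycles and their incidences.
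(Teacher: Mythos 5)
Your overall strategy --- hyperbolicity of a cocompact special group is equivalent to the absence of $\mathbb{Z}^2$, and $\mathbb{Z}^2$ subgroups come from pairs of motions with disjoint supports --- is the same engine the paper runs on. But the step you yourself flag as ``the main obstacle'' is a genuine gap, and it is exactly the point the paper does not leave open: one needs to know that non-hyperbolicity forces the existence of two commuting \emph{spherical} diagrams whose colors are pairwise adjacent, i.e.\ whose supporting edges of $\Gamma$ are pairwise disjoint. This is Lemma \ref{lemma:nothyp}, quoted from \cite[Fact 4.11]{SpecialRH}; without it (or a proof of your ``normal form for commuting pairs'') the ``if'' direction for sun and pulsar graphs does not close. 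Note also that your normal form is misstated: two commuting infinite-order elements need not admit representatives with disjoint supports (take $w_2=w_1$); the correct statement concerns a pair generating $\mathbb{Z}^2$, and even then one must pass to other elements of the centraliser furnished by the Centralizer Theorem and verify that the disjointly-supported pieces one extracts are again legal words based at an actual configuration --- the kind of bookkeeping the paper carries out explicitly in Proposition \ref{prop:FxZinBraid}, and which is not free.

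The second issue is organizational but has mathematical consequences. The paper does not argue case by case (freeness for trees and roses, direct verification for suns and pulsars); it proves one uniform equivalence --- $B_n(\Gamma,S)$ is hyperbolic if and only if there is no configuration $S'=S_1\sqcup S_2$ together with disjoint subgraphs $\Lambda_1\supset S_1$, $\Lambda_2\supset S_2$ such that $B_r(\Lambda_1,S_1)$ and $B_s(\Lambda_2,S_2)$ are both non-trivial --- and then feeds this into the triviality criterion of Lemma \ref{lem:braidgroupnontrivial} plus a purely graph-theoretic classification. Your case-by-case sketch is where errors creep in: the claim that $B_n$ of a tree is free because all higher cubes collapse fails for $n\geq 4$ (a tree with two essential vertices yields $\mathbb{Z}^2$, which is precisely why trees drop out of the list at $n=4$), and some of your proposed $n=3$ ``obstruction'' subgraphs --- two cycles meeting in exactly one vertex, or a theta graph with a ray attached at a degree-three vertex --- are rose or pulsar graphs and hence \emph{do} have hyperbolic $B_3$. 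The uniform reduction avoids both pitfalls, and the only case analysis left is the elementary classification of graphs with no two disjoint cycles, no essential vertex off a cycle, and (for $n\geq 4$) at most one essential vertex.
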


\noindent
The proof of Theorem \ref{thm:hypbraidgroup} is based on \cite{SpecialRH} which shows that the fundamental group of a special cube complex is hyperbolic if and only if it does not contain $\mathbb{Z}^2$ as a subgroup. More precisely, we will use the following statement \cite[Fact 4.11]{SpecialRH}.

\begin{lemma}\label{lemma:nothyp}
Let $X$ be a cube complex and $(\Delta, \phi)$ a special coloring. If $\pi_1(X)$ is not hyperbolic, then there exist two commuting spherical diagrams $J_1 \cdots J_n$ and $H_1 \cdots H_m$ such that $J_i$ and $H_j$ are adjacent colors for every $1 \leq i \leq n$ and every $1 \leq j \leq m$. \qed
\end{lemma}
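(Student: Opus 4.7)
The plan is to combine the RAAG embedding of Theorem \ref{thm:embedRAAG} with the description of commuting pairs in right-angled Artin groups. First, I would extract from the non-hyperbolicity hypothesis a $\mathbb{Z}^2$ subgroup of $\pi_1(X)$: a finitely generated non-hyperbolic subgroup of $\pi_1(X)$ can be realised as the fundamental group of a compact subcomplex $Y \subset X$ whose universal cover is a CAT(0) cube complex admitting a geometric action, and then rank rigidity (or the flat plane theorem) yields two elements $g, h \in \pi_1(X, x)$ which commute and generate a free abelian group of rank two. Writing $g$ and $h$ as reduced legal words $J_1 \cdots J_n$ and $H_1 \cdots H_m$ produces two commuting spherical diagrams, but a priori nothing forces each $J_i$ to be adjacent in $\Delta$ to each $H_j$.

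To upgrade this, I would push both diagrams into $A(\Delta)$ via Fact \ref{fact:diagvsRAAG}: the equality $gh = hg$ and the fact that $\langle g, h \rangle \simeq \mathbb{Z}^2$ persist in $A(\Delta)$. The key input is then Servatius's classification of centralisers in right-angled Artin groups: two elements of $A(\Delta)$ generating a $\mathbb{Z}^2$ admit, after simultaneous conjugation in $A(\Delta)$ and replacement of each by a suitable positive power, representatives $g^k$ and $h^l$ whose supports are contained in two disjoint vertex subsets $V_1, V_2 \subseteq V(\Delta)$, with every vertex of $V_1$ adjacent in $\Delta$ to every vertex of $V_2$. Since a reduced legal word representing $g^k$ must, by Fact \ref{fact:diagvsRAAG}, have the same letter content as any reduced word representing $g^k$ in $A(\Delta)$, it only uses letters from $V_1$, and symmetrically for $h^l$ and $V_2$. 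The adjacency of $V_1$ and $V_2$ in $\Delta$ is then exactly the required property.

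The main obstacle is translating the conjugation step back into the cube complex: the conjugating element in $A(\Delta)$ need not come from $\pi_1(X, x)$, so the conjugate pair cannot directly be declared spherical at $x$. The remedy is to work inside the fundamental groupoid $\mathcal{D}(X)$, absorbing the conjugation as a change of basepoint along a suitable path in $X$; this preserves sphericity of the two diagrams while transforming their legal words as required. An alternative, more geometric route avoids this issue by analysing the disc diagram bounded by the null-homotopic commutator $[g, h]$: the dual curves of the diagram pair each letter of one boundary word with a letter of the other, and the special-coloring axioms (no self-osculation, no inter-osculation) force paired letters to be adjacent in $\Delta$. This disc-diagram approach is arguably cleaner, though it requires careful handling of backtracks and of redundant insertion pairs in the two legal words.
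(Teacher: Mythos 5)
The paper never proves this lemma: it is imported verbatim from \cite{SpecialRH} (Fact 4.11 there), which is why the statement carries a qed symbol and no argument. So your attempt can only be judged on its own merits, and against the techniques the paper uses for the closely analogous statements it \emph{does} prove (Propositions \ref{prop:cycliccentraliser} and \ref{prop:FxZinBraid}, and the flat-rectangle-plus-pigeonhole claims in the proof of Theorem \ref{thm:braidgrouprh}). Your overall strategy --- extract $\mathbb{Z}^2$, push it into $A(\Delta)$ via Fact \ref{fact:diagvsRAAG}, apply Servatius, and absorb the conjugation into the groupoid $\mathcal{D}(X)$ as a change of basepoint --- is the right kind of argument, and you correctly identify the basepoint problem and its fix.

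However, your stated key input is false, and this is a genuine gap. Take $\Delta$ a single edge $\{a,b\}$ and the pair $g=a$, $h=ab$ in $A(\Delta)\cong\mathbb{Z}^2$: both are cyclically reduced, they generate $\mathbb{Z}^2$, yet $\mathrm{supp}(g^k)=\{a\}$ and $\mathrm{supp}(h^l)=\{a,b\}$ meet for all $k,l\neq 0$, and simultaneous conjugation cannot change this since the support of a cyclically reduced element is a conjugacy invariant. No conjugation and passage to powers of $g$ and $h$ yields disjoint, completely adjacent supports; you must change the generating pair of (a finite-index subgroup of) the $\mathbb{Z}^2$. Concretely, writing the cyclic reduction as $g=g_1^{a_1}\cdots g_k^{a_k}$ and $h=g_1^{b_1}\cdots g_k^{b_k}z$ with $\mathrm{supp}(z)\subseteq \mathrm{lk}(\mathrm{supp}(g))$, one must take either two distinct pure factors $g_i^{a_i},g_j^{a_j}$ (when $k\geq 2$) or the pair $g,\ z^{a_1}=h^{a_1}g^{-b_1}$ (when $k=1$). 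This exposes a second unaddressed step: a pure factor of a spherical diagram is a legal word (its letters can be brought to the front by commutations, which preserve legality by Lemma \ref{lem:commutationlegal}), but nothing guarantees its terminus is the basepoint, so it need not be spherical. One has to take powers, use compactness of $X$ to pigeonhole on the finitely many possible termini, and form the quotient of two powers with equal terminus --- exactly the trick the paper performs in the proof of Proposition \ref{prop:FxZinBraid}; note that both this step and your appeal to a geometric action need a compactness hypothesis that the statement omits but every application supplies. Your disc-diagram alternative inherits the same defect: for $g=a$, $h=ab$ the dual curves of a commutator diagram pair letters of $g$ with letters of $g^{-1}$, so no adjacency between every $J_i$ and every $H_j$ can be read off before the pair has been normalised as above.
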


\noindent
Our second preliminary lemma determines when a graph braid group is trivial.

\begin{lemma}\label{lem:braidgroupnontrivial}
Let $\Gamma$ be a one-dimensional CW-complex. 
\begin{itemize}
	\item For every $S \in UC_1(\Gamma)$, the braid group $B_1(\Gamma,S)$ is non-trivial if and only if the connected component of $\Gamma$ containing $S$ also contains an induced cycle.
	\item For every $n \geq 2$ and every $S \in UC_n(\Gamma)$, the braid group $B_n(\Gamma,S)$ is non-trivial if and only if $\Gamma$ contains a connected component intersecting $S$ which contains an induced cycle, or a connected component whose intersection with $S$ has cardinality at least two and which contains a vertex of degree at least three. Alternatively, $B_n(\Gamma,S)$ is trivial if and only if the connected components of $\Gamma$ whose intersections with $S$ have cardinality one are trees, and those whose intersections with $S$ have cardinality at least two are segments.
\end{itemize}
\end{lemma}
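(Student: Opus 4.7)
My plan is to reduce to the case of a connected graph via Lemma \ref{lem:product}, which decomposes $B_n(\Gamma, S) \simeq \prod_{\Gamma_0} B_{k_0}(\Gamma_0, S \cap \Gamma_0)$ over the connected components $\Gamma_0$ of $\Gamma$, where $k_0 = |S \cap \Gamma_0|$. Since a product of groups is trivial iff each factor is, the lemma reduces to the following two assertions: (i) if $k_0 = 1$, then $B_1(\Gamma_0, S \cap \Gamma_0) = \pi_1(\Gamma_0)$ is trivial iff $\Gamma_0$ is a tree (i.e., contains no induced cycle), which is standard and immediately handles the first bullet; (ii) if $k_0 \geq 2$, then $B_{k_0}(\Gamma_0, S \cap \Gamma_0)$ is trivial iff $\Gamma_0$ is a segment. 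It remains to prove (ii).

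\emph{Trivial direction of (ii).} If $\Gamma_0$ is a segment, it has no vertex of degree $\geq 3$ and no homotopically non-trivial edge-path, so the hypotheses of Proposition \ref{prop:deformretract} are vacuous and $UC_{k_0}(\Gamma_0)$ is homotopy equivalent to $UC_{k_0}^{\mathrm{top}}(\Gamma_0)$. Parametrizing the segment by $[0,1]$, the latter is the convex region $\{0 \leq x_1 < x_2 < \cdots < x_{k_0} \leq 1\} \subset \mathbb{R}^{k_0}$, which is contractible; hence $B_{k_0}(\Gamma_0) = 1$.

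\emph{Non-trivial direction of (ii), cycle case.} Suppose $\Gamma_0$ contains an induced cycle $C$. If $C \subsetneq \Gamma_0$, Proposition \ref{prop:braidembed} (with $m = 1 \leq k_0$) yields an embedding $\mathbb{Z} \simeq B_1(C) \hookrightarrow B_{k_0}(\Gamma_0)$, so the latter is non-trivial. If instead $\Gamma_0 = C$ is the cycle, I use the ``total winding number'' homomorphism $B_{k_0}(C) \to \mathbb{Z}$, defined by lifting a loop in $UC_{k_0}^{\mathrm{top}}(C)$ to the universal cover $\mathbb{R}^{k_0}$ of the ordered configuration space $C_{k_0}^{\mathrm{top}}(C) \subset (\mathbb{R}/\mathbb{Z})^{k_0}$ and summing the coordinate translations of the endpoints; this sum is independent of the choice of lift since any permutation of coordinates preserves the sum. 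The ``global rotation'' loop, which moves every particle once around $C$, maps to $k_0 \neq 0$ under this homomorphism, so $B_{k_0}(C) \neq 1$.

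\emph{Non-trivial direction of (ii), tripod case.} The remaining case is when $\Gamma_0$ is a tree containing a vertex $v$ of degree at least three. After sufficient subdivision (which does not alter the braid group by Proposition \ref{prop:deformretract}), three pendant arcs at $v$ form a tripod $Y$ which is a connected induced subgraph of $\Gamma_0$, and I may further arrange for two particles of $S \cap \Gamma_0$ to lie near the leaves of two of these arcs. If $Y \subsetneq \Gamma_0$, Proposition \ref{prop:braidembed} (applied with $m = 2 \leq k_0$) embeds $B_2(Y)$ into $B_{k_0}(\Gamma_0)$; if $Y = \Gamma_0$, I work directly inside $B_{k_0}(Y)$, parking the remaining $k_0 - 2$ particles far from $v$. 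It therefore suffices to produce a non-trivial loop, which is provided by the classical \emph{Y-exchange}: move one of the two particles across $v$ into the third (empty) arc, slide the second particle across $v$ into the now-vacated first arc, then return the first particle across $v$ into the second arc. This loop maps to the non-trivial transposition under the natural homomorphism $B_n(\Gamma_0) \to \mathfrak{S}_n$ induced by the covering $C_n^{\mathrm{top}}(\Gamma_0) \to UC_n^{\mathrm{top}}(\Gamma_0)$, and hence is non-trivial. The main technical hurdle is verifying that the Y-exchange is actually realisable as an $S$-legal combinatorial path in $UC_{k_0}(\Gamma_0)$ given the positions of all other particles---this is precisely what the subdivision in Proposition \ref{prop:deformretract} is required for.
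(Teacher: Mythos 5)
Your proof is correct and follows essentially the same route as the paper: reduce to connected components via Lemma \ref{lem:product}, then split into the tree/segment/cycle/degree-three cases using Proposition \ref{prop:braidembed} and a Y-exchange in a tripod. The only difference is that you supply explicit certificates (contractibility of the ordered configuration space of a segment, the total winding number for a cycle, the permutation homomorphism for the Y-exchange) for steps the paper merely asserts or leaves as an exercise.
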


\begin{proof}
The first assertion follows from the observation that the braid group $B_1(\Gamma,S)$ is isomorphic to the fundamental group of the connected component of $\Gamma$ containing $S$. Next, fix some $n \geq 2$. Write $\Gamma$ as 
$$A_1 \sqcup \cdots \sqcup A_p \sqcup B_1 \sqcup \cdots \sqcup B_q \sqcup C_1 \sqcup \cdots \sqcup C_r$$
where the $A_i$'s are the connected components of $\Gamma$ whose intersections with $S$ have cardinality one, where the $B_i$'s are the connected components of $\Gamma$ whose intersections with $S$ have cardinality at least two, and where the $C_i$'s are the connected components of $\Gamma$ which are disjoint from $S$. As a consequence of Lemma \ref{lem:product}, it is sufficient to show that, for every $1 \leq i \leq p$ and every $1 \leq j \leq q$, the braid groups $B_1(A_i)$ and $B_k(B_j)$, where $k \geq 2$, are trivial if and only if $A_i$ is a tree and $B_j$ is a segment. It is a consequence of the following facts.

\begin{fact}
For every connected one-dimensional CW-complex $\Gamma$, the braid group $B_1(\Gamma)$ is trivial if and only if $\Gamma$ is a tree.
\end{fact}

\noindent
Our fact follows directly from the observation that $B_1(\Gamma)$ is isomorphic to the fundamental group of $\Gamma$. 

\begin{fact}
Let $\Gamma$ be a connected one-dimensional CW-complex and $n \geq 2$ an integer. The braid group $B_n(\Gamma)$ is trivial if and only if $\Gamma$ is a segment.
\end{fact}

\noindent
Suppose that $\Gamma$ is not a segment. Then it must contain either an induced cycle or a vertex of degree at least three. If $\Gamma$ contains a cycle, either $\Gamma$ is itself a cycle, so that $B_n(\Gamma)$ must be infinite cyclic, or $\Gamma$ contains a proper cycle $C$, so that it follows from Proposition \ref{prop:braidembed} that $B_1(C) \simeq \mathbb{Z}$ embeds into $B_n(\Gamma)$. So, in this case, $B_n(\Gamma)$ is not trivial. Next, if $\Gamma$ contains a vertex of degree at least three, up to subdividing $\Gamma$ (which does not modify the corresponding braid group according to Proposition \ref{prop:deformretract}), one can apply Proposition \ref{prop:braidembed} to deduce that $B_2(T)$ embeds into $B_n(\Gamma)$, where is as in Figure~\ref{figure2}. Notice that $ca^{-1}bc^{-1}ab^{-1}$ is a non-trivial element of $B_2(T)$, so that $B_n(\Gamma)$ must be non-trivial. 
\begin{figure}
\begin{center}
\includegraphics[trim={0 23cm 47cm 0},clip,scale=0.5]{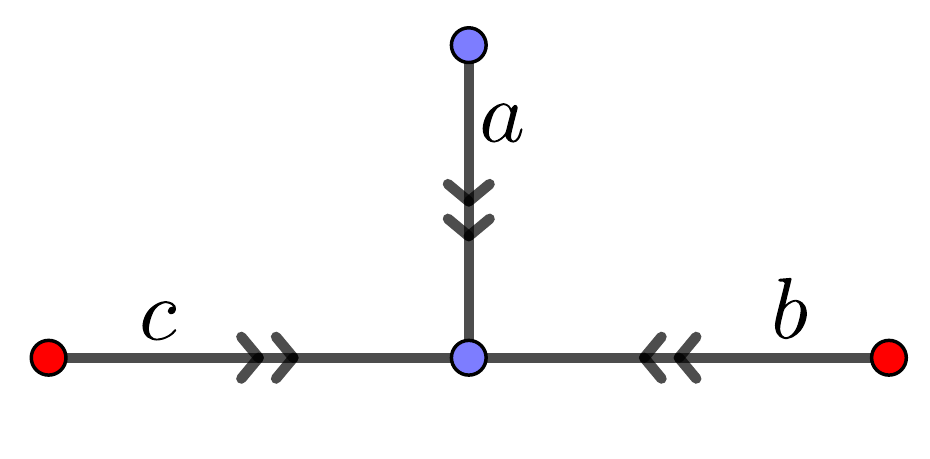}
\caption{}
\label{figure2}
\end{center}
\end{figure}

\medskip \noindent
Conversely, we leave as an exercise to show that the braid groups of a segment are trivial. 
\end{proof}

\noindent
We are now ready to prove Theorem \ref{thm:hypbraidgroup}. 

\begin{proof}[Proof of Theorem \ref{thm:hypbraidgroup}.]
Let $n \geq 2$ be an integer. For convenience, we fix an initial configuration $S \in UC_n(\Gamma)$. 

\medskip \noindent
Suppose that $B_n(\Gamma, S)$ is not hyperbolic. By applying Lemma \ref{lemma:nothyp}, we find two spherical diagrams $e_1 \cdots e_p, \epsilon_1 \cdots \epsilon_q \in \mathcal{D}(UC_n(\Gamma),S')$, for some $S' \in UC_n(\Gamma, S)$, such that $e_i \cap \epsilon_j= \emptyset$ for every $1 \leq i \leq p$ and $1 \leq j \leq q$. As a consequence, the subgraphs $\Lambda_1= e_1 \cup \cdots \cup e_p$ and $\Lambda_2= \epsilon_1 \cup \cdots \cup \epsilon_q$ are disjoint. Setting $S_1=S' \cap \Lambda_1$ (resp. $S_2 = S' \cap \Lambda_2$), notice that $e_1 \cdots e_p$ (resp. $\epsilon_1 \cdots \epsilon_q$) belongs to $\mathcal{D}(C_r(\Lambda_1),S_1)$ (resp. $\mathcal{D}(C_s(\Lambda_2),S_2)$), where $r$ (resp. $s$) denotes the cardinality of $S_1$ (resp. $S_2$). Thus, we have found a configuration $S'= S_1 \sqcup S_2 \in C_n(\Gamma, S)$, where $S_1,S_2$ have cardinalities $r,s$ respectively, and two subgraphs $\Lambda_1, \Lambda_2 \subset \Gamma$ containing $S_1,S_2$ respectively, such that $B_r(\Lambda_1,S_1)$ and $B_s(\Lambda_2,S_2)$ are non-trivial. 

\medskip \noindent
Conversely, if there exist a configuration $S'= S_1 \sqcup S_2 \in C_n(\Gamma, S)$, where $S_1,S_2$ have cardinalities $r,s$ respectively, and two subgraphs $\Lambda_1, \Lambda_2 \subset \Gamma$ containing $S_1,S_2$ respectively, such that $B_r(\Lambda_1,S_1)$ and $B_s(\Lambda_2,S_2)$ are non-trivial, then $B_n(\Gamma)$ must contain a subgroup isomorphic to $\mathbb{Z}^2$. Indeed, the subgroup of $B_n(\Gamma)$ generated by $B_r(\Lambda_1,S_1)$ and $B_s(\Lambda_2,S_2)$ is naturally isomorphic to $B_r(\Lambda_1,S_1) \times B_s(\Lambda_2,S_2)$ since $\Lambda_1$ and $\Lambda_2$ are disjoint. It follows that $B_n(\Gamma)$ cannot be hyperbolic.

\medskip \noindent
Thus, we have proved that $B_n(\Gamma,S)$ is hyperbolic if and only if there do not exist a configuration $S'= S_1 \sqcup S_2 \in C_n(\Gamma, \ast)$, where $S_1,S_2$ have cardinalities $r,s$ respectively, and two subgraphs $\Lambda_1, \Lambda_2 \subset \Gamma$ contains $S_1,S_2$ respectively, such that $B_r(\Lambda_1,S_1)$ and $B_s(\Lambda_2,S_2)$ are non-trivial. It follows from Lemma \ref{lem:braidgroupnontrivial} below that:
\begin{itemize}
	\item $B_2(\Gamma)$ is hyperbolic if and only if $\Gamma$ does not contain a pair of disjoint induced cycles;
	\item $B_3(\Gamma)$ is hyperbolic if and only if $\Gamma$ does not contain a pair of disjoint induced cycles nor a vertex of degree at least three which is disjoint from some induced cycle;
	\item for every $n \geq 4$, $B_n(\Gamma)$ is hyperbolic if and only if $\Gamma$ does not contain a pair of disjoint induced cycles, nor a vertex of degree at least three which is disjoint from some induced cycles, nor two distinct vertices of degree at least three.
\end{itemize} 
Suppose that $\Gamma$ is a graph satisfying all the conditions of our previous point. If $\Gamma$ does not contain a vertex of degree at least three, then it must be either a cycle or a segment. So suppose that $\Gamma$ contains a vertex of degree at least three. By assumption, $\Gamma$ contains a unique such vertex. Therefore, $\Gamma$ can be obtained by gluing along a single vertex some graphs without vertices of degree at least three, i.e., segments and cycles. It follows that $\Gamma$ must be a rose graph. Consequently, $B_n(\Gamma)$ is hyperbolic if and only if $\Gamma$ is a rose graph.

\medskip \noindent
Now, suppose that $\Gamma$ does not contain a pair of disjoint induced cycles nor a vertex of degree at least three which is disjoint from some induced cycle. If $\Gamma$ does not contain induced cycles, then $\Gamma$ is tree. If $\Gamma$ contains exactly one induced cycle, then $\Gamma$ must be a sun graph since all its vertices of degree at least three have to belong to this cycle. From now on, suppose that $\Gamma$ contains at least two induced cycles; let $C_1$ and $C_2$ be two such cycles. As a consequence, $\Gamma$ contains at least one vertex of degree at least three (which belongs to a path linking two cycles). If $\Gamma$ contains a single such vertex, then it follows from the previous paragraph that $\Gamma$ must be a rose graph. So suppose that $\Gamma$ contains at least two vertices of degree at least three. First, assume that $\Gamma$ contains exactly two such vertices, say $u$ and $v$. So $\Gamma$ can be constructed from $u$ and $v$ by gluing segments. Only two gluings are possible: identifying the two endpoints to $u$ and $v$ respectively, or identifying a single endpoint to $u$ or $v$. Indeed, identifying the two endpoints to $u$ (resp. $v$) is impossible since $v$ (resp. $u$) would be disjoint from the cycle thus created. Consequently, $\Gamma$ must be a pulsar graph. Finally, assume that $\Gamma$ contains at least three vertices of degree at least three, say $u,v,w$. By assumption, $\Gamma$ contains at least two induced cycles, say $C_1$ and $C_2$; moreover, $u,v,w$ must belong to these cycles. For every $x,y \in \{u,v,w\}$ and $i \in \{ 1,2 \}$, let $[x,y]_i$ denote the arc of $C_i$ between $x$ and $y$. Because $C_1 \neq C_2$, we can suppose without loss of generality that $[u,v]_1 \neq [u,v]_2$. Consequently, $[u,v]_1 \cup [u,v]_2$ contains an induced cycle which is disjoint from $w$, contradicting our assumptions. Consequently, $B_3(\Gamma)$ is hyperbolic if and only if $\Gamma$ is a tree, or a sun graph, or a rose graph, or a pulsar graph.
\end{proof}

\noindent
An interesting consequence of Theorem \ref{thm:hypbraidgroup}, combined with the next lemma, is that the (cohomologic or asymptotic) dimension of a hyperbolic graph braid group is at most three.

\begin{lemma}\label{lem:rosefree}
Let $\Gamma$ be a rose graph. For every $n \geq 2$, the braid group $B_n(\Gamma)$ is free. 
\end{lemma}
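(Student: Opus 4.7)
The plan is to show that the discrete configuration space $UC_n(\Gamma)$ is homotopy equivalent to a one-dimensional CW-complex; this immediately implies that $B_n(\Gamma) = \pi_1(UC_n(\Gamma))$ is free, since the fundamental group of any connected graph is free. First, by Proposition~\ref{prop:deformretract}, after a sufficient subdivision of $\Gamma$ (which does not change $B_n(\Gamma)$ up to isomorphism), we may work in the cubical model and identify $B_n(\Gamma)$ with $\pi_1(UC_n(\Gamma))$. The decisive structural feature of a rose graph is that it possesses a unique vertex $v$ of degree at least three, every other vertex having degree at most two.

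The central step is to apply discrete Morse theory à la Abrams and Farley--Sabalka to the cube complex $UC_n(\Gamma)$. A $k$-cube of $UC_n(\Gamma)$ records a simultaneous motion of $k$ particles along $k$ pairwise disjoint edges, and the critical cells of the standard Morse function correspond to configurations in which a particle is ``blocked'' at a branch vertex of $\Gamma$; the dimension of a critical cell is essentially bounded by the number of particles simultaneously blocked at branch vertices. Since the rose graph $\Gamma$ contains only the single branch vertex $v$, at most one particle can be blocked at a time, so every critical cell has dimension at most one. Consequently $UC_n(\Gamma)$ collapses onto a connected 1-complex and $B_n(\Gamma)$ is free.

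The main obstacle is the careful verification of the critical-cell dimension bound in the Morse-theoretic framework when specialised to rose graphs, particularly the handling of configurations with a particle sitting at $v$. An alternative approach, more in line with the paper's formalism, would exploit the special coloring: using commutations between elementary moves supported in distinct petals of the rose, one should be able to put each reduced legal word representing a spherical diagram into a normal form in which each maximal block of letters is supported in a single petal, expressing every element of $B_n(\Gamma)$ as an ordered product of loops each living in a single petal. This would display $B_n(\Gamma)$ directly as a free product of cyclic groups, and in particular as a free group.
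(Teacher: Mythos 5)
There is a genuine gap in your main argument. In the Abrams/Farley--Sabalka Morse function, the critical cells are not produced only by particles blocked at essential vertices: every edge of $\Gamma$ outside the chosen spanning tree (a \emph{deleted} edge, one per petal of the rose) is automatically ``unordered'', and a collection of pairwise disjoint deleted edges together with blocked vertices forms a critical cell of dimension equal to the number of those edges. For a rose with $k\geq 2$ petals, after the subdivision required by Proposition~\ref{prop:deformretract} one finds two disjoint deleted edges in distinct petals, hence a critical $2$-cell; so your claim that every critical cell has dimension at most one is false, and $UC_n(\Gamma)$ does not collapse onto a graph under this Morse function. A concrete check: for the wedge of two circles, each subdivided into three edges, $UC_2(\Gamma)$ has $10$ vertices, $18$ edges and $5$ squares, so $\chi=-3$ and the (free) group $B_2(\Gamma)$ has rank $4$, while the square spanned by the two deleted edges is critical. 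This computation also undermines your alternative sketch: a normal form exhibiting $B_n(\Gamma)$ as a free product of one cyclic group per petal would give a free group of rank $k$, which is wrong already in this example. The Morse route could perhaps be repaired by choosing the spanning tree so that every deleted edge is incident to the central vertex, forcing any two edges of a critical cell to intersect there, but you would need to verify that such a choice is compatible with the ordering conventions of the Morse function, and you do not address this.

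The paper's proof takes a different and shorter route, exploiting the same structural feature (a single branch vertex) through the cubical geometry rather than Morse theory: the hyperplanes of the universal cover $X_n(\Gamma)$ labelled by the edges of $\Gamma$ incident to the centre are pairwise non-transverse, because transverse hyperplanes must carry disjoint labels. This family therefore induces an equivariant arboreal structure; the components obtained by cutting along it are copies of $X_n$ of a disjoint union of segments, whose braid groups are trivial, so $B_n(\Gamma)$ acts freely on the dual tree and is free by Bass--Serre theory. You should either adopt this argument or supply the missing analysis of deleted edges in the Morse-theoretic one.
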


\begin{proof}
Let $o$ denote the center of $\Gamma$, and $e_1, \ldots, e_k$ its edges which are adjacent to $o$. Let $\mathcal{J}$ denote the collection of all the hyperplanes of the universal cover $X_n(\Gamma)$ of $UC_n(\Gamma)$ which are labelled by one of the edges $e_1, \ldots, e_k$. Because the edges labelling two transverse hyperplanes must be disjoint, it follows that $\mathcal{J}$ is a collection of pairwise disjoint hyperplanes; as a consequence, $\mathcal{J}$ induces an equivariant arboreal structure on $X_n(\Gamma)$. Notice that a connected component of $X_n(\Gamma)$ cut along $\mathcal{J}$ is naturally isometric to $X_n(\Gamma \backslash \{e_1, \ldots, e_k \},S)$ for some configuration $S \in UC_n(\Gamma \backslash \{e_1, \ldots, e_k \})$, and its stabiliser is naturally isomorphic to $B_n(\Gamma \backslash \{ e_1, \ldots, e_k \},S)$ for the same $S$. But the connected components of $\Gamma \backslash \{ e_1 , \ldots, e_k \}$ are all segments, so that we deduce that these stabilisers must be trivial. Consequently, $B_n(\Gamma)$ acts freely on the simplicial tree dual to the arboreal structure of $X_n(\Gamma)$ induced by $\mathcal{J}$, which implies that $B_n(\Gamma)$ must be free. 
\end{proof}

\begin{remark}\label{remark:criterionfree}
An argument similar to the previous one also implies \cite[Proposition~5.5]{PresentationsGraphBraidGroups}, namely: if $\Gamma$ is a connected one-dimensional CW-complex which contains a vertex belonging to all its induced cycles, then $B_2(\Gamma)$ is free. 
\end{remark}

\begin{ex}\label{ex1:GraphHyp}
The braid group $B_n(K_m)$ is hyperbolic if and only if $n=1$, or $n=2$ and $m \leq 5$, or $m \leq 3$. We already know that $B_1(K_m) \simeq \pi_1(K_m)$ is a free group of rank $(m-1)(m-2)/2$. Notice that if $m \leq 3$ then $K_m$ is either a single vertex, a segment or a cycle, so that $B_n(K_m)$ is either trivial or infinite cyclic. Therefore, the only interesting braid groups in our family are $B_2(K_4)$ and $B_2(K_5)$. As a consequence of \cite[Example 5.1]{GraphBraidGroups}, $B_2(K_5)$ is the fundamental group of a closed non orientable surface of genus six. 
\end{ex}

\begin{ex}\label{ex2:GraphHyp}
The braid group $B_n(K_{p,q})$, where we suppose that $p \leq q$, is hyperbolic if and only if $n=1$, or $n=2$ and $p \leq 3$, or $n=3$ and $p \leq 2$, or $n \geq 4$ and $p=q=2$, or $n \geq 4$ and $p=1$. In this family, the only braid group which might not be free have the form $B_2(K_{3,n})$ and $B_3(K_{2,n})$ where $n \geq 3$. We do not know whether or not these groups are actually free. (Notice that $K_{2,n}$ is a pulsar graph for every $n \geq 1$.) But at least one of them is not free: as a consequence of \cite[Example 5.2]{GraphBraidGroups}, $B_2(K_{3,3})$ is the fundamental group of a closed non orientable surface of genus four. 
\end{ex}

\subsection{Acylindrical hyperbolicity}

\noindent
In this section, our goal is to show that essentially all the graph braid groups turn out to be acylindrically hyperbolic. More precisely, we want to prove:

\begin{thm}\label{thm:braidgroupacyl}
Let $\Gamma$ be a connected compact one-dimensional CW-complex and $n \geq 2$ an integer. The braid group $B_n (\Gamma)$ is either cyclic or acylindrically hyperbolic. 
\end{thm}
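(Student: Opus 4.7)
The plan is as follows: whenever $B_n(\Gamma,S)$ is not cyclic, I will exhibit a rank-one (contracting) isometry for its geometric action on the CAT(0) cube complex $X_n(\Gamma,S)$, and then invoke a standard criterion (Caprace-Sageev, or the cubical form of Osin's theorem) to deduce acylindrical hyperbolicity.

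First, I would pin down the cyclic cases. By Lemma \ref{lem:braidgroupnontrivial}, the group $B_n(\Gamma,S)$ is trivial when $\Gamma$ is a segment. When $\Gamma$ is a single simple cycle, a sufficient subdivision (allowed by Proposition \ref{prop:deformretract}) makes $UC_n(\Gamma,S)$ deformation retract onto the circle of configurations obtained by rotating all particles simultaneously, giving $B_n(\Gamma,S) \simeq \mathbb{Z}$. In every other situation $\Gamma$ is connected yet neither a segment nor a cycle, which forces the existence of a vertex $v \in \Gamma$ of degree at least three, since a connected one-dimensional CW-complex whose vertices all have degree at most two is either a segment or a cycle.

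Given such a $v$, I would build a spherical diagram $g \in B_n(\Gamma,S)$ representing a commutator-type motion of a single particle around a theta-shaped subgraph based at $v$: pick two edges $e_1,e_2$ incident to $v$ together with an auxiliary reduced edge-path $f$ of $\Gamma$ connecting the other endpoints of $e_1$ and $e_2$ and disjoint from $\{e_1,e_2\}$ (such an $f$ exists, after subdivision, because $\Gamma$ is connected but is not a cycle), and send one particle around the loop $e_1 \cdot f \cdot e_2^{-1}$, parking the remaining $n-1$ particles on a subgraph disjoint from $e_1 \cup e_2 \cup f$. By construction, the reduced legal word representing $g$ has colour-support in $\Delta$ that meets both the colours adjacent to $v$ (namely $e_1,e_2$) and the colours along $f$, and the induced subgraph of $\Delta$ on this support is not a join. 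By Theorem \ref{thm:embedRAAG} together with the Behrstock-Charney characterisation of contracting elements in right-angled Artin groups, $g$ is then rank-one in $A(\Delta)$; because the special colouring induces a cubical local isometry from $X_n(\Gamma,S)$ into the universal cover of the Salvetti complex of $A(\Delta)$, the contractivity of $g$ descends to $X_n(\Gamma,S)$.

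Finally, the properness and cocompactness of the $B_n(\Gamma,S)$-action on $X_n(\Gamma,S)$ upgrade contractivity into the WPD property for $g$ acting on the (hyperbolic) contact graph of $X_n(\Gamma,S)$, and Osin's criterion then yields acylindrical hyperbolicity. The main obstacle I anticipate is carrying out the construction of $g$ uniformly across the possible local structures near $v$: in degenerate configurations (such as when $v$ is squeezed between short cycles, as in pulsar graphs) choosing the escape path $f$ so as to produce a genuinely non-join support requires a careful case distinction, and one must also verify that the parked particles do not force $g$ to become trivial or to lose its spherical nature when taken as a diagram at the prescribed basepoint $S$.
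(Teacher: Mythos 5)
Your overall strategy (produce a contracting/WPD element and invoke the rank-one criterion) is the same as the paper's, which constructs an element with infinite cyclic centraliser and applies Proposition \ref{prop:specialcentraliseracyl}. However, the concrete element you build does not work, for two reasons.

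First, the loop $e_1\cdot f\cdot e_2^{-1}$ you base everything on need not exist: your claim that a reduced path $f$ joining the far endpoints of $e_1,e_2$ and avoiding $v$ ``exists, after subdivision, because $\Gamma$ is connected but is not a cycle'' is false whenever $\Gamma$ is a tree. For instance, a star with four arms has $B_2(\Gamma)$ non-cyclic (Lemma \ref{lem:braidcyclic}, Fact \ref{fact3}) and must be covered by the theorem, yet $\Gamma$ contains no cycle at all. This is precisely why the paper splits into two cases and, in the acyclic case, uses a \emph{tripod shuffle} of all $n$ particles rather than a loop of a single particle.

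Second, and more seriously, even when such a cycle exists, the element that sends one particle around it while \emph{parking} the other $n-1$ particles is in general not rank one. By Servatius' Centralizer Theorem transported through Proposition \ref{prop:braidRAAG} (this is the computation in Proposition \ref{prop:cycliccentraliser}), the centraliser of your $g$ contains every spherical motion of the parked particles supported on edges of $\Gamma$ disjoint from the cycle. If that complement contains another cycle, or a degree-three vertex together with two parked particles (e.g.\ $\Gamma$ two triangles joined by a segment, $n=2$), this yields a $\mathbb{Z}^2$ containing $g$, so $g$ is not contracting; equivalently, in $A(\Delta)$ the link of $\mathrm{supp}(g)$ is non-empty, which is exactly the obstruction in the Behrstock--Charney criterion you invoke. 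The fix is the paper's: choose $g$ so that \emph{all} $n$ particles move and are confined to a single connected support (rotating all particles around a cycle, or cycling them through a tripod), so that any commuting word $h$ of disjoint edges would have to move $0$ particles and is therefore trivial. Your ``main obstacle'' paragraph gestures at the parked particles being a problem, but the issue is not a degenerate case to be handled separately --- it defeats the construction in generic examples.
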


\noindent
Our argument is based on the following criterion:

\begin{prop}\label{prop:specialcentraliseracyl}
Let $G$ be the fundamental group of a compact special cube complex. Assume that $G$ is not cyclic. Then $G$ is acylindrically hyperbolic if and only it contains a non-trivial element whose centraliser is cyclic. 
\end{prop}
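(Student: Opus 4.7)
The overall plan is to reduce the question to the CAT(0) geometry of the universal cover. By Theorem~\ref{thm:embedRAAG}, $G$ embeds into a right-angled Artin group and is in particular torsion-free; moreover, it acts properly and cocompactly on the CAT(0) universal cover $\widetilde{X}$ of the compact special cube complex. The forward implication is then easy: if $G$ is acylindrically hyperbolic, pick a generalised loxodromic element $g \in G$; by the Dahmani--Guirardel--Osin theory (see \cite{OsinAcyl}) its centraliser $C_G(g)$ is virtually cyclic, and being a torsion-free subgroup that contains the non-trivial element $g$, it must be infinite cyclic.

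The converse is the substantive direction. Given a non-trivial $g \in G$ with cyclic centraliser, torsion-freeness forces $g$ to have infinite order, and properness of the action shows that $g$ acts on $\widetilde{X}$ as a combinatorial loxodromic isometry with a combinatorial axis $L_g$. I would then try to prove that $g$ is \emph{contracting} on $\widetilde{X}$ (equivalently, a rank-one isometry). The strategy is contrapositive: if $L_g$ is not contracting, then rank-rigidity results for cocompact actions on CAT(0) cube complexes (in the spirit of Caprace--Sageev, or their cubical flat-strip refinements) should produce an infinite-order element $h \in G$ whose combinatorial axis is parallel to a translate of $L_g$ and which commutes with $g$ without being a power of it. Then $\langle g, h \rangle \cong \mathbb{Z}^2 \leq C_G(g)$, contradicting cyclicity.

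Having shown that $g$ is contracting, I would conclude via the standard fact that a group acting properly cocompactly on a CAT(0) cube complex and containing a contracting element is either virtually cyclic or acylindrically hyperbolic (proved for instance using Sisto's projection complexes, or by extracting a WPD element from the axis of $g$ \`a la Bestvina--Fujiwara). Since $G$ is torsion-free and non-cyclic by hypothesis, it cannot be virtually cyclic, so acylindrical hyperbolicity follows. The main obstacle in this approach is the step ``$g$ not contracting $\Rightarrow$ $\mathbb{Z}^2 \leq C_G(g)$'': making this precise in the cubical world requires careful hyperplane combinatorics -- typically one identifies a long chain of hyperplanes transverse to $L_g$ and uses cocompactness to promote a cubical half-strip into a genuine commuting $\mathbb{Z}$-direction -- and this is exactly where I would expect the formalism of special colourings from Section~\ref{section:formalism} to be put to work.
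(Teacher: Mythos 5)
Your forward implication is exactly the paper's: take a generalised loxodromic element, invoke the Dahmani--Guirardel--Osin result that its centraliser is virtually cyclic (the paper cites \cite[Corollary 6.6]{DGO}), and use torsion-freeness, which follows from the embedding into a right-angled Artin group (Theorem \ref{thm:embedRAAG}). The converse also follows the same skeleton as the paper --- cyclic centraliser $\Rightarrow$ $g$ is contracting $\Rightarrow$ $G$ is virtually cyclic or acylindrically hyperbolic --- but the paper simply cites \cite[Corollary 6.2]{SpecialRankOne} for the first arrow and \cite{arXiv:1112.2666} for the second, whereas you attempt to sketch a proof of the first arrow. That is where your proposal has a genuine gap.

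The step ``$g$ not contracting $\Rightarrow$ $\mathbb{Z}^2 \leq C_G(g)$'' is precisely the hard content of the cited corollary, and it does not follow from Caprace--Sageev rank rigidity in the way you invoke it. Rank rigidity is a statement about the existence of \emph{some} rank-one element in the group under essentiality and no-fixed-point-at-infinity hypotheses (or else a product decomposition of the whole complex); it does not characterise \emph{which} elements are contracting, and in particular it does not promote a flat half-plane along the axis of a \emph{given} non-contracting element $g$ to an actual element of $G$ commuting with $g$. Closing this gap is exactly where specialness enters: in the special setting one controls centralisers combinatorially via the embedding into a right-angled Artin group and Servatius's Centralizer Theorem \cite{ServatiusCent} (this is how the paper handles centralisers elsewhere, e.g.\ in Proposition \ref{prop:cycliccentraliser}), rather than via CAT(0) flat-strip arguments. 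You correctly flag this step as the main obstacle, but as written the proof is incomplete at its crux; either cite the result of \cite{SpecialRankOne} as the paper does, or supply the special-coloring/RAAG argument in full.
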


\begin{proof}
As a consequence of \cite[Corollary 6.2]{SpecialRankOne}, combined with \cite{arXiv:1112.2666}, $G$ is acylindrically hyperbolic if it contains a non-trivial element whose centraliser is cyclic. Conversely, every acylindrically hyperbolic group contains an infinite-order element whose centraliser is virtually cyclic according to \cite[Corollary 6.6]{DGO}. Because $G$ is torsion-free, we deduce that, whenever $G$ is acylindrically hyperbolic, it must contain a non-trivial element whose centraliser is cyclic. 
\end{proof}

\noindent
Now, we need to introduce some terminology. Let $X$ be a special cube complex and $(\Delta, \phi)$ a special coloring of $X$. A diagram is \emph{cyclically reduced} if it cannot be written as a reduced legal word starting with a color and ending with its inverse. Any diagram $D$ is conjugate in $\mathcal{D}(X)$ to a unique cyclically reduced diagram: indeed, the analogue statement known for right-angled Artin groups can be transferred to diagrams according to Fact \ref{fact:diagvsRAAG}. This unique diagram is the \emph{cyclic reduction} of $D$. Going back to $X=UC_n(\Gamma)$, fix an $(S,\ast)$-diagram $D \in \mathcal{D}(X)$, where $S \in UC_n(\Gamma)$ is some initial configuration, and let $R$ denote its cyclic reduction. Writing $R$ as an $S'$-legal word of oriented edges $e_1 \cdots e_k$, where $S' \in UC_n(\Gamma)$ is some other initial configuration, we define the \emph{support} of $D$, denoted by $\mathrm{supp}(D)$, as the \emph{full support} of $R$, i.e., the subgraph $e_1 \cup \cdots \cup e_k \subset \Gamma$; and the \emph{set of particles} of $D$, denoted by $\mathrm{part}(D)$, as $S' \cap \mathrm{supp}(D)$. So, if $D$ is cyclically reduced and if it is thought of as a motion of particles in $\Gamma$, then $\mathrm{part}(D)$ is the set of particles which really moves, and $\mathrm{supp}(D)$ is the subgraph of $\Gamma$ in which they are confined. These two sets are used in our next statement to construct elements whose centralisers are infinite cyclic. 

\begin{prop}\label{prop:cycliccentraliser}
Let $\Gamma$ be a one-dimensional CW-complex, $n \geq 1$ an integer and $S \in UC_n(\Gamma)$ an initial configuration. A non-trivial element $g \in B_n (\Gamma,S)$ has a cyclic centraliser if $\mathrm{supp}(g)$ is connected and if $\mathrm{part}(g)$ has cardinality $n$. 
\end{prop}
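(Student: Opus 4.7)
The plan is to use the embedding $B_n(\Gamma, S) \hookrightarrow A(\Delta)$ provided by Theorem \ref{thm:embedRAAG} together with the classical description of centralisers in right-angled Artin groups due to Servatius. Since centralisers are preserved (up to isomorphism) by conjugation, including conjugation by diagrams in the groupoid $\mathcal{D}(UC_n(\Gamma))$, I would first reduce to the case where $g$ is itself cyclically reduced, so that $S = \mathrm{part}(g) \subset V(\mathrm{supp}(g))$. Write $g$ as a reduced $S$-legal word $e_1 \cdots e_k$ and denote by $\mathrm{supp}_{\Delta}(g) = \{e_1, \ldots, e_k\} \subset V(\Delta)$ the support of $g$ viewed as an element of $A(\Delta)$.

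The first step is to verify that $g$ is a \emph{block} of $A(\Delta)$: that $\mathrm{supp}_{\Delta}(g)$ cannot be decomposed as a disjoint union $A \sqcup B$ with every vertex of $A$ adjacent to every vertex of $B$ in $\Delta$. If such a splitting existed, the edges in $A$ would share no vertex with those in $B$, so $\bigcup A$ and $\bigcup B$ would be vertex-disjoint subgraphs of $\Gamma$ whose union is the connected graph $\mathrm{supp}(g)$; this forces $A$ or $B$ to be empty. Servatius' centraliser theorem then yields
$$C_{A(\Delta)}(g) = \langle r \rangle \times A(L),$$
where $r$ is the primitive root of $g$ in $A(\Delta)$ and $L \subset V(\Delta)$ denotes the link of $\mathrm{supp}_{\Delta}(g)$, i.e., the set of edges of $\Gamma$ sharing no vertex with $\mathrm{supp}(g)$.

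The second step, which I expect to be the main obstacle, is to establish that $A(L) \cap B_n(\Gamma, S) = \{1\}$. Given $h$ in this intersection, I would start from an $S$-legal word representing $h$ and successively apply cancellations and commutations until the result is reduced in $A(\Delta)$; by Lemma \ref{lem:commutationlegal} each such operation preserves $S$-legality, so the outcome is still an $S$-legal word representing $h$, and it now uses only letters from $L$ because $h$ lies in $A(L)$. If this word were nonempty, its first letter $e$ (or $e^{-1}$) would force $o(e) \in S \subset V(\mathrm{supp}(g))$, but every edge of $L$ avoids $V(\mathrm{supp}(g))$ by construction; hence the word is empty and $h = 1$.

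Finally, for any $h \in C_{B_n(\Gamma, S)}(g) = C_{A(\Delta)}(g) \cap B_n(\Gamma, S)$, one writes $h = r^p a$ with $a \in A(L)$, and lets $k_0 \geq 1$ be the integer satisfying $g = r^{k_0}$. Since $r$ and $a$ commute in $A(\Delta)$, the computation $h^{k_0} g^{-p} = a^{k_0}$ exhibits $a^{k_0}$ as an element of $B_n(\Gamma, S) \cap A(L) = \{1\}$; torsion-freeness of $A(\Delta)$ then gives $a = 1$. Therefore $C_{B_n(\Gamma, S)}(g)$ is contained in $\langle r \rangle \cong \mathbb{Z}$ and is cyclic.
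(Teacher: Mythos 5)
Your proof is correct and follows essentially the same route as the paper's: reduce to the cyclically reduced case, embed $B_n(\Gamma,S)$ into $A(\Delta)$, invoke Servatius' Centralizer Theorem, use connectivity of $\mathrm{supp}(g)$ to see that $g$ is a single block, and use the fact that all $n$ particles lie in $\mathrm{supp}(g)$ to kill the link factor. Your handling of the link factor (a nonempty reduced legal word over $\mathrm{lk}(\mathrm{supp}_{\Delta}(g))$ would have to start by moving a particle, yet every particle sits in $V(\mathrm{supp}(g))$) and your endgame (showing $C(g)\subseteq\langle r\rangle$ directly rather than that $\langle g\rangle$ has finite index in $C(g)$) are spelled out a bit more explicitly than in the paper, but the underlying ideas coincide.
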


\begin{proof}
Let $g \in B_n(\Gamma,S)$ be a non-trivial element. According to Fact \ref{fact:diagvsRAAG} and to \cite[Centralizer Theorem]{ServatiusCent}, there exist words of oriented edges $a,h_1, \ldots, h_k$ and integers $m_1, \ldots, m_k \in \mathbb{Z} \backslash \{ 0 \}$ such that our element $g$ (thought of as a diagram) can be written as an $S$-legal word $ah_1^{m_1} \cdots h_k^{m_k}a^{-1}$ where $h_1 \cdots h_k$ is the cyclic reduction of $g$ and where any oriented edge of $h_i$ is disjoint from any oriented edge of $h_j$ for every distinct $1 \leq i,j \leq n$; moreover, the centraliser of $g$ is the set of $(S,S)$-legal words of oriented edges which can be written as $ah_1^{r_1} \cdots h_k^{r_k} h a^{-1}$, where $r_1, \ldots, r_k \in \mathbb{Z}$ are integers and where $h$ is a word of oriented edges which are disjoint from the edges of the $h_i$'s. 

\medskip \noindent
Notice that the support of $g$ is the disjoint union of the full supports of the $h_i$'s, hence $k=1$ since $\mathrm{supp}(g)$ is connected by assumption. So $g=ah_1^{m_1}a^{-1}$. Let $p \in B_n(\Gamma,S)$ be an element of the centraliser of $g$. From the previous paragraph, we know that $p$ can be written as a legal word $ah_1^{r_1}ha^{-1}$ for some integer $r_1 \in \mathbb{Z}$ and some word $h$ of oriented edges which are disjoint disjoint from those of $h_1$. Notice that $\mathrm{supp}(p)$ is the disjoint of the full supports of $h$ and $h_1$. But the full support of $h_1$ coincides with the support of $g$, which has cardinality $n$. Since a full support must have cardinality at most $n$, it follows that $h$ is trivial, i.e., $p=ah_1^{r_1}a^{-1}$. 

\medskip \noindent
Thus, we have proved that, for every element $p \in B_n(\Gamma,S)$ of the centraliser of $g$, its power $p^{m_1}$ belongs to $\langle g \rangle$. This proves that $\langle g \rangle$ has finite index in the centraliser of $g$, and a fortiori that the centraliser of $g$ is infinite cyclic. 
\end{proof}

\begin{proof}[Proof of Theorem \ref{thm:braidgroupacyl}.]
If $B_n(\Gamma)$ is trivial, there is nothing to prove, so we suppose that $B_n(\Gamma)$ is non-trivial. According to Lemma \ref{lem:braidgroupnontrivial}, $\Gamma$ must contain either a cycle or a vertex of degree at least three. 

\medskip \noindent
Suppose that $\Gamma$ contains a cycle $C$. Without loss of generality, we may suppose that our initial configuration $S \in UC_n(\Gamma)$ is included in $C$. Let $g \in B_n(\Gamma,S)$ be the element ``rotating all the particles around $C$''. For instance, if $C$ is as in Figure \ref{figure5} (i), then $g=cba$. Since $g$ does not contain an oriented edge and its inverse, it is clearly cyclically reduced. Moreover, its support is $C$ and its set of particles has full cardinality. We deduce from Proposition~\ref{prop:cycliccentraliser} that the centraliser of $g$ is infinite cyclic.
\begin{figure}
\begin{center}
\includegraphics[trim={0 20.5cm 38cm 0},clip,scale=0.5]{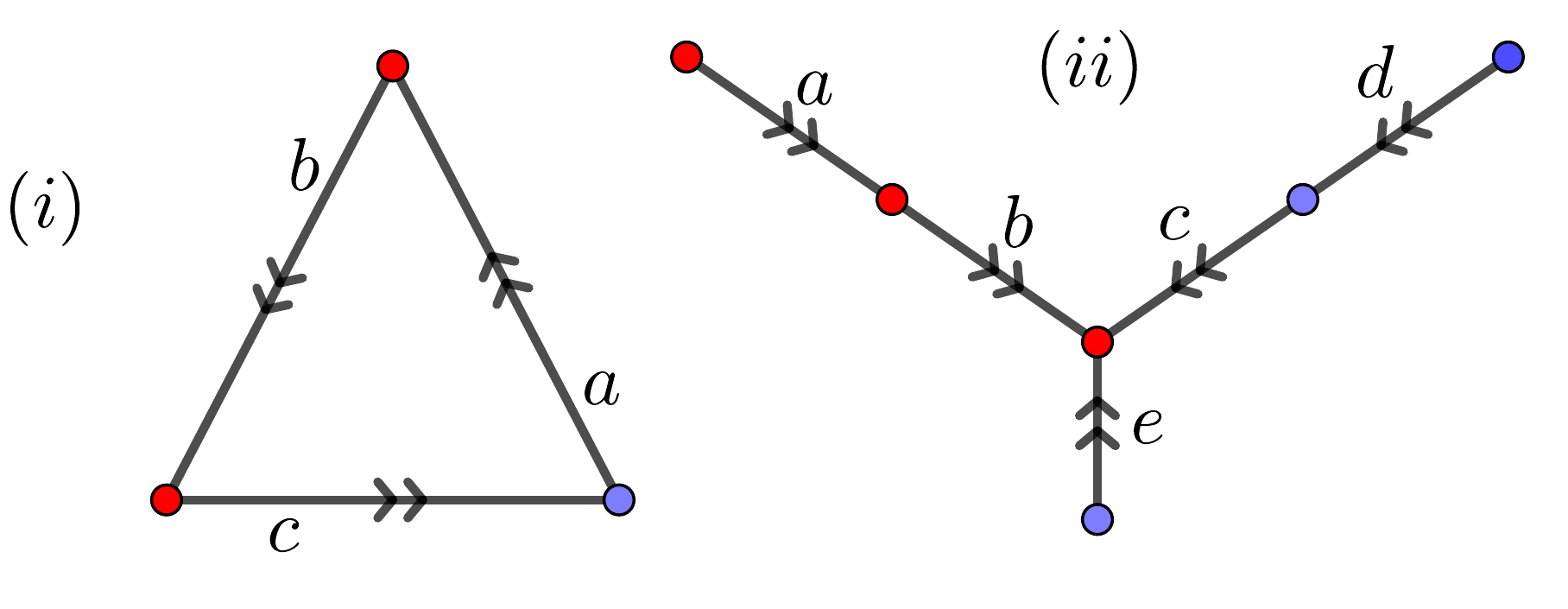}
\caption{}
\label{figure5}
\end{center}
\end{figure}

\medskip \noindent
Next, suppose that $\Gamma$ contains a vertex of degree at least three. As a consequence, $\Gamma$ contains a tripod $T$. Up to subdividing $\Gamma$ (which does not modify the braid group according to Proposition \ref{prop:deformretract}) and extracting subgraph of $T$, suppose that $T$ is isomorphic to $[-n+1,n-1] \times \{ 0 \} \cup \{0 \} \times [0,1]$. For convenience, let $k$ denote the vertex $(k,0) \in T$ for every $-n+1 \leq k \leq n-1$, and let $p$ denote the vertex $(0,1)$. Without loss of generality, suppose that the $n$ vertices of our initial configuration $S \in UC_n(\Gamma)$ are $-n+1, \ldots, 0$. Now, let $g \in B_n(\Gamma,S)$ be the element corresponding to the following motion of particles: 
\begin{itemize}
	\item move the particle at $0$ to $p$;
	\item for $k=-1,-2, \ldots, -n+1$, move the particle at $k$ to $n+k$;
	\item move the particle at $p$ to $-n+1$;
	\item for $k=1,2, \ldots, n-1$, move the particle at $k$ to $-n+k+1$. 
\end{itemize}
For instance, if $n=3$, $T$ is as in Figure \ref{figure5} (ii) and
$$g= e^{-1}bc^{-1}d^{-1}abc^{-1}eb^{-1}a^{-1}cb^{-1}dc.$$
Notice that the first and last letters of any $S$-legal word of edges representing $g$ must be $[0,p]$ and $[0,1]$ respectively. Therefore, $g$ is cyclically reduced. It follows that the support of $g$ is $T$ and that it set of particles has full cardinality. We deduce from Proposition~\ref{prop:cycliccentraliser} that the centraliser of $g$ is infinite cyclic.

\medskip \noindent
Thus, we have proved that $B_n(\Gamma)$ contains an element whose centraliser is infinite cyclic. We conclude from Proposition \ref{prop:specialcentraliseracyl} that $B_n(\Gamma)$ is either cyclic or acylindrically hyperbolic. 
\end{proof}

\begin{remark}
We suspect that, even when $\Gamma$ is not compact, an application of the criterion \cite[Theorem 4.17]{article3} implies that an element as in Proposition \ref{prop:cycliccentraliser} induces a contracting isometry on the universal cover $X_n(\Gamma)$. This would show that, for any connected one-dimensional CW-complex $\Gamma$ and for any integer $n \geq 1$, the graph braid group $B_n(\Gamma)$ is either cyclic or acylindrically hyperbolic. However, such a graph braid group would not be finitely generated in general. Indeed, if $B_n(\Gamma)$ is finitely generated then there exists a compact subcomplex $\Lambda \subset \Gamma$ such that $B_n(\Gamma) \simeq B_n(\Lambda)$. As often in geometric group theory, we are mainly interested in finitely generated groups, so we do not pursue the generalisation mentioned above. 
\end{remark}

\noindent
Theorem \ref{thm:braidgroupacyl} stresses out the following question: when is a graph braid group cyclic? We already know from Lemma \ref{lem:braidgroupnontrivial} when it is trivial, so it remains to determine when it is infinite cyclic. This is the goal of our next lemma.

\begin{lemma}\label{lem:braidcyclic}
Let $\Gamma$ be a connected one-dimensional CW-complex.
\begin{itemize}
	\item The braid group $B_2(\Gamma)$ is infinite cyclic if and only if $\Gamma$ is a cycle or a star with three arms.
	\item For every $n \geq 3$, the braid group $B_n(\Gamma)$ is infinite cyclic if and only if $\Gamma$ is a cycle. 
\end{itemize}
\end{lemma}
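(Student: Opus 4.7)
The plan is to prove both directions, leaning heavily on earlier results in the paper.

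For sufficiency, if $\Gamma$ is a cycle I would subdivide via Proposition \ref{prop:deformretract} so that $UC_n(\Gamma) \simeq UC_n^{\mathrm{top}}(S^1)$, which is classically homotopy equivalent to $S^1$; hence $B_n(\Gamma) \cong \mathbb{Z}$. For $\Gamma$ a tripod and $n = 2$, I would subdivide each arm to length at least two and verify directly (a classical calculation going back to Abrams) that $UC_2(\Gamma)$ deformation retracts onto a single cycle, yielding $B_2(\Gamma) \cong \mathbb{Z}$.

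For necessity, suppose $B_n(\Gamma) \cong \mathbb{Z}$. Since $\mathbb{Z}$ is hyperbolic, Theorem \ref{thm:hypbraidgroup} drastically constrains $\Gamma$: a rose for $n \geq 4$; a tree, sun, rose, or pulsar for $n = 3$; and a graph without two disjoint induced cycles for $n = 2$. The main workhorse is Proposition \ref{prop:braidembed}: any connected induced subgraph $\Lambda \subseteq \Gamma$ yields an embedding $B_m(\Lambda) \hookrightarrow B_n(\Gamma) = \mathbb{Z}$ for suitable $m \leq n$, so $B_m(\Lambda)$ must itself be cyclic or trivial. The strategy is then to show that every graph on the hyperbolicity list --- except cycles, and tripods when $n = 2$ --- contains an induced subgraph $\Lambda$ whose braid group $B_m(\Lambda)$ is non-cyclic. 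The short list of forbidden induced subgraphs I would check is: the star $X_4$ with four arms, the H-graph (two degree-three vertices joined by an edge), the lollipop (a cycle with an attached ray), the two-petal rose, and, for $n \geq 3$, the tripod itself.

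To rule each of these out, I would exhibit two cyclically reduced diagrams whose supports overlap but that are not proportional, and invoke the Centralizer Theorem used in Proposition \ref{prop:cycliccentraliser} to conclude they do not commute, hence generate a non-abelian subgroup. Combining these exclusions with the hyperbolicity constraint of Theorem \ref{thm:hypbraidgroup} then narrows $\Gamma$ down to precisely the graphs listed. The main obstacle is the tree case --- especially the tripod for $n = 3$ and the H-graph or $X_4$ for $n = 2$ --- because here there is no cycle to rotate around and all generators arise from swap-type motions at branch vertices; the special coloring formalism nevertheless makes the required non-commuting diagrams reasonably concrete to write down.
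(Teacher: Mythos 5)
Your proposal is correct and follows essentially the same route as the paper: both reduce to the same list of forbidden subgraphs (two-petal rose, lollipop, four-armed star, H-graph, and the tripod for $n \geq 3$) via Proposition \ref{prop:braidembed} and then exhibit explicit pairs of elements witnessing non-cyclicity. The only differences are cosmetic: the paper derives the structural constraint on $\Gamma$ directly from the H-graph exclusion rather than from Theorem \ref{thm:hypbraidgroup}, and it certifies non-cyclicity by checking $g^k \neq h^l$ for all non-zero $k,l$ (via the right-angled Artin group embedding) rather than by showing non-commutation, while leaving the sufficiency direction as an exercise.
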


\begin{proof}
We begin by proving that various graph braid groups are not infinite cyclic. Our general criterion will follow easily from these observations. 
\begin{figure}
\begin{center}
\includegraphics[trim={0 10.5cm 30cm 0},clip,scale=0.5]{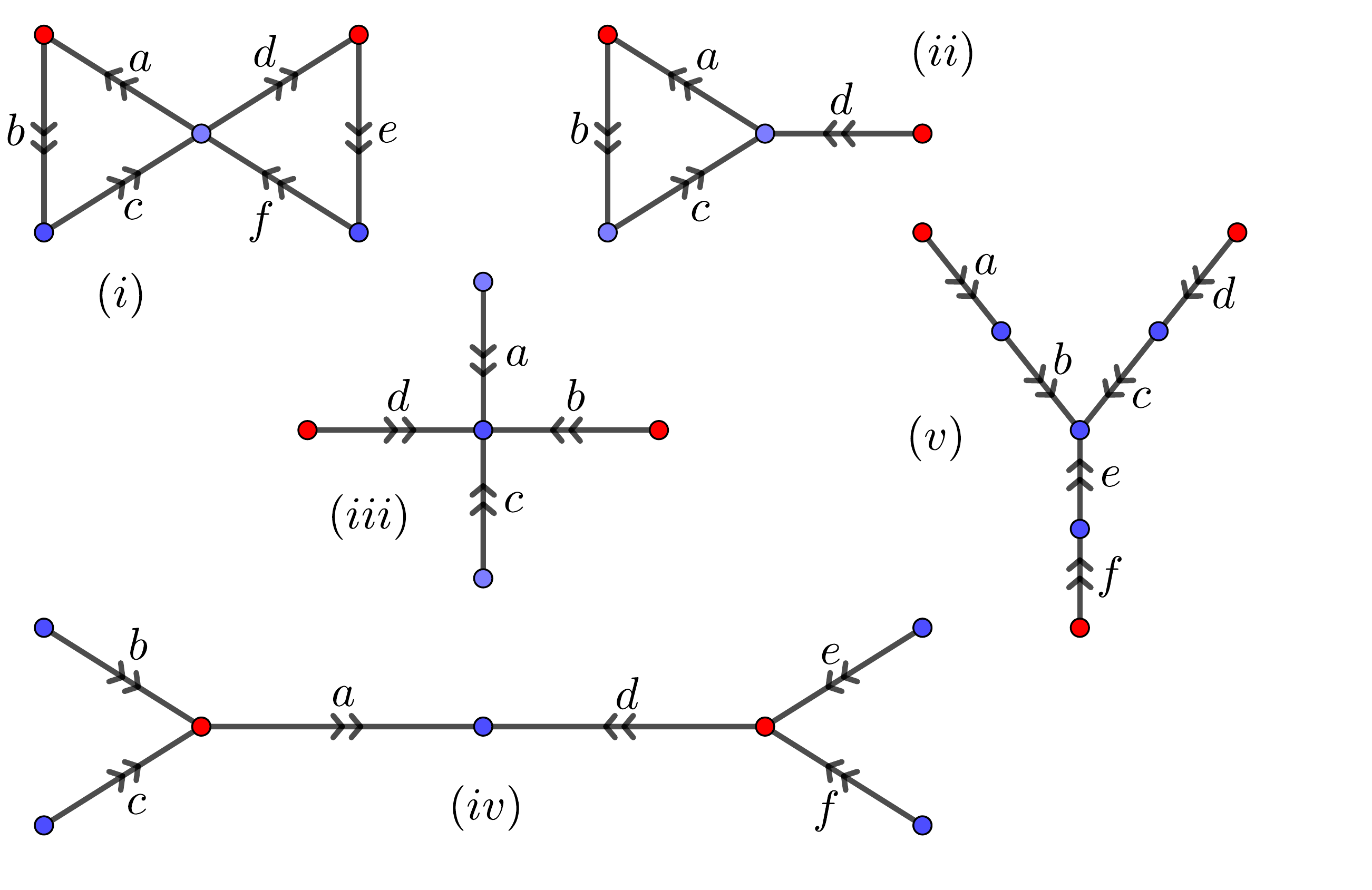}
\caption{}
\label{figure1}
\end{center}
\end{figure}

\begin{fact}\label{fact1}
If $\Gamma$ is a bouquet of two circles, then $B_2(\Gamma)$ is not infinite cyclic.
\end{fact}

\noindent
Consider $\Gamma$ as in Figure \ref{figure1} (i). Set $g=bca$ and $h=efd$. For every $n,m \in \mathbb{Z} \backslash \{ 0 \}$, one has $g^n=(bca)^n \neq (efd)^m = h^m$. This implies that $B_2(\Gamma)$ is not infinite cyclic. 

\begin{fact}\label{fact2}
If $\Gamma$ is the wedge of a cycle and a segment, then $B_2(\Gamma)$ is not infinite cyclic.
\end{fact}

\noindent
Consider $\Gamma$ as in Figure \ref{figure1} (ii). Set $g= bca$ and $h=dbacd^{-1}$. For every $n,m \in \mathbb{Z} \backslash \{ 0 \}$, one has $g^n=(bca)^n \neq d(bac)^m d^{-1}= h^m$. This implies that $B_2(\Gamma)$ is not infinite cyclic.

\begin{fact}\label{fact3}
If $\Gamma$ is a star with four arms, then $B_2(\Gamma)$ is not infinite cyclic.
\end{fact}

\noindent
Consider $\Gamma$ as in Figure \ref{figure1} (iii). Set $g= da^{-1}bd^{-1}ab^{-1}$ and $h=dc^{-1}bd^{-1}cb^{-1}$. For every $n,m \in \mathbb{Z} \backslash \{ 0 \}$, one has $g^n= (da^{-1}bd^{-1}ab^{-1})^n \neq (dc^{-1}bd^{-1}cb^{-1})^m =h^m$. This implies that $B_2(\Gamma)$ is not infinite cyclic. 

\begin{fact}\label{fact4}
If $\Gamma$ is a union of two segments whose middle points are linked by a third segments, then $B_2(\Gamma)$ is not infinite cyclic.
\end{fact}

\noindent
Consider $\Gamma$ as in Figure \ref{figure1} (iv). Set $g= dc^{-1}a^{-1}b^{-1}cad^{-1}b$ and $h= f^{-1}ad^{-1}e^{-1}fda^{-1}e$. For every $n,m \in \mathbb{Z} \backslash \{ 0 \}$, one has 
$$g^n = ( dc^{-1}a^{-1}b^{-1}cad^{-1}b)^n \neq ( f^{-1}ad^{-1}e^{-1}fda^{-1}e)^m = h^m.$$ 
This implies that $B_2(\Gamma)$ is not infinite cyclic. 

\medskip \noindent
Now, let $\Gamma$ be a connected one-dimensional CW-complex such that $B_2(\Gamma)$ is infinite cyclic. According to Fact \ref{fact4}, $\Gamma$ contains at most one vertex of degree at least three. Therefore, $\Gamma$ must be a union of $n$ cycles and $m$ segments glued along a single vertex (i.e., $\Gamma$ is a rose graph). Two cases may happen. First, if $\Gamma$ contains a cycle (i.e., $n \geq 1$) then it follows from Facts \ref{fact1} and \ref{fact2} that necessarily $n=1$ and $m=0$, i.e., $\Gamma$ is a cycle. Secondly, if $\Gamma$ does not contain any cycle (i.e., $n=0$), then $\Gamma$ must be a star with $m$ arms. But we know from Fact \ref{fact3} that necessarily $m \leq 3$. Since $B_2(\Gamma)$ is trivial if $m \leq 2$, we conclude that $\Gamma$ must be a star with three arms. Conversely, we left as an exercise to show that $B_2(\Gamma)$ is indeed infinite cyclic if $\Gamma$ is a cycle or a star with three arms.

\medskip \noindent
Next, let $\Gamma$ be a one-dimensional CW-complex and $n \geq 3$ an integer such that $B_n(\Gamma)$ is infinite cyclic. If $\Gamma$ contains a vertex of degree at least three then, up to subdividing $\Gamma$ (which does not affect the braid group $B_n(\Gamma)$ according to Proposition \ref{prop:deformretract}), we can deduce from Proposition \ref{prop:braidembed} that $B_n(\Gamma)$ contains $B_2(T)$ where $T$ is a star with three arms. According to the fact below, this is impossible. Therefore, $\Gamma$ must be either a cycle or a segment. We know that $B_n(\Gamma)$ is trivial if $\Gamma$ is a segment, so $\Gamma$ must be a cycle. Conversely, we leave as an exercise to show that $B_n(\Gamma)$ is infinite cyclic if $\Gamma$ is a cycle. 

\begin{fact}
If $\Gamma$ is a star with three arms, then $B_3(\Gamma)$ is not infinite cyclic. 
\end{fact}

\noindent
Consider $\Gamma$ as in Figure \ref{figure1} (v). Set the elements $g= abe^{-1}dcb^{-1}a^{-1}ec^{-1}d^{-1}$ and $h=abc^{-1}feb^{-1}a^{-1}ce^{-1}f^{-1}$. For every $n,m \in \mathbb{Z} \backslash \{ 0 \}$, one has
$$g^n = (abe^{-1}dcb^{-1}a^{-1}ec^{-1}d^{-1})^n \neq ( abc^{-1}feb^{-1}a^{-1}ce^{-1}f^{-1} )^m = h^m.$$
This implies that $B_2(\Gamma)$ is not infinite cyclic.
\end{proof}

\subsection{Relative hyperbolicity}\label{section:RH}

\noindent
In this section, we are interested in determining when a graph braid group turns out to be relatively hyperbolic. We first focus on toral relatively hyperbolic groups, i.e., groups hyperbolic relative to free abelian subgroups. Our main result in this direction is:

\begin{thm}\label{thm:braidRHabelian}
Let $\Gamma$ be a connected compact one-dimensional CW-complex. 
\begin{itemize}
	\item The braid group $B_2(\Gamma)$ is toral relatively hyperbolic if and only if $\Gamma$ does not contain three pairwise disjoint induced cycles.
	\item The braid group $B_3(\Gamma)$ is toral relatively hyperbolic if and only if $\Gamma$ does not contain an induced cycle disjoint from two other induced cycles; nor a vertex of degree at least four disjoint from an induced cycle; nor a segment between two vertices of degree three which is disjoint from an induced cycle; nor a vertex of degree three which is disjoint from two induced cycles; nor two disjoint induced cycles one of those containing a vertex of degree three.
	\item The braid group $B_4(\Gamma)$ is toral relatively hyperbolic if and only if $\Gamma$ is a rose graph, or a segment linking two vertices of degree three, or a cycle containing two vertices of degree three, or two cycles glued along a non-trivial segment.
	\item For every $n \geq 5$, the braid group $B_n(\Gamma)$ is toral relatively hyperbolic if and only if $\Gamma$ is a rose graph. If so, $B_n(\Gamma)$ is a free group. 
\end{itemize}
\end{thm}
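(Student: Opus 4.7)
The plan is to mirror the proof of Theorem \ref{thm:hypbraidgroup}, but replacing the obstruction to Gromov hyperbolicity coming from Lemma \ref{lemma:nothyp} with an analogous combinatorial obstruction to toral relative hyperbolicity for fundamental groups of compact special cube complexes. Such a criterion (which should be available from \cite{SpecialRH}) essentially says: if $\pi_1(X)$ is not toral relatively hyperbolic, then either there are three spherical diagrams with pairwise adjacent colors (producing a $\mathbb{Z}^3$ not isolated from non-commuting diagrams), or two commuting spherical diagrams one of which can be extended by an additional color non-adjacent to some color in the other (producing a $(\text{non-abelian}) \times \mathbb{Z}$ subgroup, which is an obstruction to abelian peripherals). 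Applied to $X = UC_n(\Gamma)$ via the special coloring of Proposition \ref{prop:UCspecialcoloring}, such a configuration produces either three pairwise disjoint subgraphs $\Lambda_1, \Lambda_2, \Lambda_3 \subset \Gamma$ each carrying a non-trivial braid subgroup, or two disjoint subgraphs $\Lambda_1, \Lambda_2$ one of whose braid groups is not abelian.

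Using Lemmas \ref{lem:braidgroupnontrivial} and \ref{lem:braidcyclic} (plus Lemma \ref{lem:product} to split disjoint pieces off as direct factors), I would translate these cubical conditions into graph-theoretic conditions on the $\Lambda_i$'s: depending on how many particles of $S$ each $\Lambda_i$ absorbs, non-triviality of $B_{|S_i|}(\Lambda_i, S_i)$ becomes a requirement like "contains an induced cycle" (when one particle) or "contains an induced cycle or a vertex of degree at least three" (when at least two particles), and non-cyclicity becomes "is not a cycle and not a star with three arms" after subdivision. Distributing $n$ particles among $\Lambda_1, \Lambda_2, \Lambda_3$ in all admissible ways and listing the resulting obstructions is then a purely combinatorial task, which I expect to produce exactly the four exclusion lists in the statement; in particular for $n \geq 5$ the particle-count constraints are tight enough that only rose graphs survive, and Lemma \ref{lem:rosefree} already yields that these have free braid groups (which are trivially toral relatively hyperbolic).

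For the converse direction --- showing that the listed graphs do yield toral relatively hyperbolic braid groups --- the rose graph case is again Lemma \ref{lem:rosefree}. For the other graphs appearing for $n = 2, 3, 4$ (few disjoint cycles; segments between degree-three vertices; two cycles glued along a segment; etc.), I would invoke the more detailed Theorem \ref{thm:braidgrouprh} stated later in Section \ref{section:RH}. That theorem gives relative hyperbolicity from a collection $\mathcal{G}$ of subgraphs satisfying the three conditions listed in the excerpt, together with an explicit description of the peripheral subgroups. The task is to choose $\mathcal{G}$ for each listed graph and verify that the resulting peripheral subgroups are free abelian: $\mathbb{Z}^2$ for pairs of disjoint cycles (via Lemma \ref{lem:product}), $\mathbb{Z}$ for single cycles, and higher $\mathbb{Z}^k$ in the case of $B_4$ on two cycles glued along a segment.

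The main obstacle is this last verification in Step 4 rather than the enumeration itself: Steps 1--3 are formal, driven by the same two lemmas used for Theorem \ref{thm:hypbraidgroup}, but the "toral" part (abelian, not just finitely generated, peripherals) requires a structural understanding of small-graph braid groups such as $B_4$ of a cycle with two degree-three vertices or of two cycles glued along a non-trivial segment. The natural way to extract abelianness of the peripherals is via an arboreal decomposition analogous to the one in the proof of Lemma \ref{lem:rosefree}: cut $X_n(\Gamma)$ along a suitable $\mathcal{G}$-equivariant family of hyperplanes and identify the resulting vertex stabilisers as free abelian groups generated by commuting spherical diagrams supported on disjoint cycles. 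The delicate casework is for $n = 3$, where five distinct exclusion types must each be matched to a specific obstruction configuration, and for $n = 4$, where the list of allowed graphs is narrow enough that one must argue for each graph individually.
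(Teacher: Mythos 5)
Your overall shape (reduce to a subgroup obstruction supplied by \cite{SpecialRH}, then translate into graph combinatorics via Lemmas \ref{lem:braidgroupnontrivial} and \ref{lem:braidcyclic}) is the right one, and your enumeration step matches what the paper does through Lemma \ref{lem:freeinbraid}. But the criterion you guess is not the one that exists, and the discrepancy matters. The actual statement (Theorem \ref{thm:RHspecialAbelian}) is an \emph{if and only if}: the fundamental group of a compact special cube complex is toral relatively hyperbolic if and only if it does not contain $\mathbb{F}_2 \times \mathbb{Z}$. Your first branch --- three spherical diagrams with pairwise adjacent colors producing a $\mathbb{Z}^3$ --- is not an obstruction at all: a free abelian subgroup of any rank is compatible with toral relative hyperbolicity (it can lie in a peripheral), so including that branch would over-exclude graphs; the qualifier ``not isolated from non-commuting diagrams'' is doing all the work there and is never made precise. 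The single correct obstruction is $\mathbb{F}_2\times\mathbb{Z}$, i.e.\ your second branch, and the paper's Proposition \ref{prop:FxZinBraid} shows $B_n(\Gamma)\supseteq\mathbb{F}_2\times\mathbb{Z}$ holds exactly when there are two disjoint subgraphs, one whose braid group contains a non-abelian free group and one whose braid group is merely non-trivial. Extracting those disjoint subgraphs from an abstract $\mathbb{F}_2\times\mathbb{Z}=\langle a,b\rangle\times\langle z\rangle$ is the technical heart of the argument: one cyclically reduces $z$, invokes Servatius' Centralizer Theorem through the RAAG embedding of Proposition \ref{prop:braidRAAG} to write $a,b$ as $h_1^{p_1}\cdots h_r^{p_r}h$ and $h_1^{q_1}\cdots h_r^{q_r}k$ with $h,k$ supported on edges disjoint from $\mathrm{supp}(z)$, and then shows that suitable powers $h^p,k^q$ are legal spherical words generating a free group. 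Your outline passes over this step entirely.

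The second gap is in your converse direction. Because the criterion is an equivalence in terms of a single forbidden subgroup, no separate argument is needed to show the listed graphs are toral relatively hyperbolic: it suffices to check they do not produce $\mathbb{F}_2\times\mathbb{Z}$, which is again the combinatorial enumeration. Your plan instead routes through Theorem \ref{thm:braidgrouprh} and an explicit identification of abelian peripherals. That cannot work as stated: Theorem \ref{thm:braidgrouprh} is proved only for two particles, so it says nothing about the $B_3$ and $B_4$ cases, and the structural computations you flag as ``the main obstacle'' (e.g.\ peripherals of $B_4$ of two cycles glued along a segment) are exactly what the paper's route renders unnecessary. So the proposal would need to be repaired in two places: replace the guessed obstruction by the correct $\mathbb{F}_2\times\mathbb{Z}$ criterion and supply the Centralizer-Theorem argument, and drop the Theorem \ref{thm:braidgrouprh} machinery from the converse.
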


\noindent
Our argument is based on the criterion provided by \cite[Theorem 5.16]{SpecialRH}, namely:

\begin{thm}[\cite{SpecialRH}]\label{thm:RHspecialAbelian}
The fundamental group of a compact special cube complex is toral relatively hyperbolic if and only if it does not contain $\mathbb{F}_2 \times \mathbb{Z}$ as a subgroup.
\end{thm}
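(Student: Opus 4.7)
The plan is to prove the "only if" direction by a standard argument about subgroup structure in relatively hyperbolic groups, and the "if" direction by constructing an explicit family of abelian peripheral subgroups from the special coloring and applying a cubical criterion for relative hyperbolicity. For the easy direction, assume $G = \pi_1(X)$ is hyperbolic relative to a family of abelian subgroups, and suppose for contradiction that $\mathbb{F}_2 \times \mathbb{Z} \leq G$. The centre $Z$ is infinite cyclic, and together with any cyclic $\langle f \rangle \leq \mathbb{F}_2$ it generates a $\mathbb{Z}^2$ in $G$. In a toral relatively hyperbolic group every $\mathbb{Z}^2$ is conjugate into a peripheral subgroup, and almost malnormality forces the normaliser of such a $\mathbb{Z}^2$ to stay inside that peripheral; since the entire $\mathbb{F}_2$ factor normalises $Z$, the subgroup $\mathbb{F}_2 \times Z$ would embed in an abelian group, a contradiction.

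For the harder direction, I construct the peripheral family from the special coloring $(\Delta,\phi)$. Since $G \simeq \mathcal{D}(X,x)$ embeds into $A(\Delta)$ (Theorem \ref{thm:embedRAAG} and Fact \ref{fact:diagvsRAAG}), the Servatius centraliser theorem applies, exactly as in the proof of Proposition \ref{prop:cycliccentraliser}: for a cyclically reduced spherical diagram $D = h_1 \cdots h_k$ written as a reduced legal word with pairwise commuting $h_i$'s, any element centralising $D$ has the form $h_1^{r_1} \cdots h_k^{r_k} h$ where $h$ is a word in colors commuting with every $h_i$. The hypothesis that $G$ contains no $\mathbb{F}_2 \times \mathbb{Z}$ prevents the "commuting complement" $h$ from generating a non-abelian free group --- such a pair would together with $D$ produce the forbidden subgroup --- so each $C_G(D)$ is abelian. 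Define $\mathcal{P}$ to be a set of conjugacy representatives of the maximal abelian subgroups of $G$ that contain $\mathbb{Z}^2$; by the embedding into $A(\Delta)$, each such subgroup is free abelian of finite rank.

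To finish, verify the three ingredients for $G$ to be hyperbolic relative to $\mathcal{P}$. Finiteness up to conjugation follows from the cocompactness of $X$: only finitely many orbits of commuting tuples of hyperplanes exist, and each tuple determines the commutation pattern of a maximal abelian. Almost malnormality follows by applying the no $\mathbb{F}_2 \times \mathbb{Z}$ hypothesis to an intersection $P \cap gPg^{-1}$ of distinct maximal abelians: if the intersection were non-cyclic, a free element drawn from the larger ambient commutant together with a central generator of the intersection would produce the forbidden product subgroup. The main obstacle is then to apply a geometric criterion for relative hyperbolicity, for instance the isolated flats characterisation, by showing that maximal flats in the universal cover $\tilde{X}$ have uniformly bounded coarse intersection and that every quasi-flat lies near a unique maximal flat. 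This is the technical heart of the proof: extracting geometric isolation of flats from the purely algebraic hypothesis. The diagram formalism should reduce this to tracking how legal words representing two $\mathbb{Z}^2$-generating pairs can run alongside each other, with a finite case analysis on the commutation patterns of colors in $\Delta$ bounding the possible "drift" between parallel flats; organising this reduction cleanly, and turning the resulting combinatorial bound into the required contracting behaviour in $\tilde{X}$, is where the bulk of the technical work lies.
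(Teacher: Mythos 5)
First, a remark on context: the paper does not prove this statement at all --- it is imported verbatim as \cite[Theorem 5.16]{SpecialRH}, so there is no in-paper argument to compare yours against. Judged as a standalone proof, your ``only if'' direction is essentially correct and standard: every $\mathbb{Z}^2$ in a relatively hyperbolic group is conjugate into a peripheral subgroup, the infinite centre $Z$ of $\mathbb{F}_2\times\mathbb{Z}$ forces (via almost malnormality) all the subgroups $\langle f\rangle\times Z$, $f\in\mathbb{F}_2$, into a single peripheral conjugate, and an abelian peripheral cannot contain $\mathbb{F}_2$. Your identification of the candidate peripherals in the converse is also reasonable: the Servatius centraliser theorem, pulled back through Theorem \ref{thm:embedRAAG}, together with the Tits alternative for right-angled Artin groups, does show that the absence of $\mathbb{F}_2\times\mathbb{Z}$ forces all centralisers of non-trivial elements to be free abelian of finite rank, so the maximal non-cyclic abelian subgroups are the natural peripheral family.

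The genuine gap is in the converse, and you name it yourself: ``turning the resulting combinatorial bound into the required contracting behaviour in $\tilde{X}$ \ldots is where the bulk of the technical work lies.'' Having abelian centralisers, finitely many conjugacy classes of maximal abelians, and (almost) malnormality of that family is \emph{necessary} for toral relative hyperbolicity but nowhere near sufficient --- these are purely algebraic conditions, whereas relative hyperbolicity requires a global geometric statement (isolated flats \`a la Hruska--Kleiner, or hyperbolicity plus fineness of a cone-off as in the proof of Theorem \ref{thm:braidgrouprh}). Your plan gestures at ``tracking how legal words representing two $\mathbb{Z}^2$-generating pairs can run alongside each other'' with ``a finite case analysis on the commutation patterns of colors,'' but no such analysis is carried out, and it is precisely this step that constitutes the theorem: one must show that any sufficiently thick flat rectangle (or quasi-flat) in $\widetilde{X}$ lies uniformly close to a coset of one of the chosen abelian subgroups, and that two such cosets have uniformly bounded coarse intersection. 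Without that, the argument establishes only that \emph{if} $G$ is toral relatively hyperbolic then the peripherals must be the maximal abelians you describe --- not that it is relatively hyperbolic at all. As written, the proposal is an accurate outline of the strategy used in \cite{SpecialRH} but does not prove the hard implication.
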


\noindent
In order to apply Theorem \ref{thm:RHspecialAbelian}, one needs to find subgroups isomorphic to $\mathbb{F}_2 \times \mathbb{Z}$. As a first step we determine when a graph braid group contains a non-abelian free subgroup. 

\begin{lemma}\label{lem:freeinbraid}
Let $\Gamma$ be a compact one-dimensional CW-complex, $n \geq 1$ an integer, and $S \in UC_n(\Gamma)$ an initial configuration. The following conditions are equivalent:
\begin{itemize}
	\item[(i)] the braid group $B_n(\Gamma,S)$ contains a non-abelian free subgroup;
	\item[(ii)] the braid group $B_n(\Gamma,S)$ is not free abelian;
	\item[(iii)] there exists a connected component of $\Gamma$ containing one particle of $S$ which contains at least two cycles; or a connected component containing two particles of $S$ which is not a segment, a cycle or a star with three arms; or a connected component containing at least three particles of $S$ which is neither a segment nor a cycle. 
\end{itemize}
\end{lemma}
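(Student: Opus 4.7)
The implication $(i) \Rightarrow (ii)$ is immediate, since a free abelian group contains no non-abelian free subgroup. The whole argument then hinges on the splitting $B_n(\Gamma,S) \simeq \prod_\Lambda B_{|S \cap \Lambda|}(\Lambda, S \cap \Lambda)$ supplied by Lemma \ref{lem:product}, where $\Lambda$ runs over the connected components of $\Gamma$ meeting $S$. Every further implication thus reduces to a component-by-component analysis.

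For $(ii) \Rightarrow (iii)$, I would argue by contrapositive. If clause (iii) fails then each factor $B_{|S \cap \Lambda|}(\Lambda)$ above falls into one of the ``small'' regimes covered by Lemmas \ref{lem:braidgroupnontrivial} and \ref{lem:braidcyclic}: either $|S \cap \Lambda|=1$ and $\Lambda$ carries at most one independent cycle (so $B_1(\Lambda) = \pi_1(\Lambda) \in \{ 1, \mathbb{Z} \}$); or $|S \cap \Lambda|=2$ and $\Lambda$ is a segment, a cycle, or a three-armed star (so $B_2(\Lambda) \in \{ 1, \mathbb{Z} \}$); or $|S \cap \Lambda| \geq 3$ and $\Lambda$ is a segment or a cycle (so again $B_{|S \cap \Lambda|}(\Lambda) \in \{1,\mathbb{Z}\}$). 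The resulting direct product is free abelian, contradicting (ii).

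For $(iii) \Rightarrow (i)$ I would split according to which clause of (iii) is witnessed by a component $\Lambda$. The first clause gives $B_1(\Lambda) = \pi_1(\Lambda)$ free of rank at least two, injecting into $B_n(\Gamma,S)$ via Proposition \ref{prop:braidembed}. For the second clause I use Proposition \ref{prop:braidembed} (after subdividing via Proposition \ref{prop:deformretract} if needed) to reduce to one of four minimal two-particle obstructions arising as a connected induced subgraph of $\Lambda$: a bouquet of two cycles, a wedge of a cycle and a segment, a four-armed star, or the H-graph of Figure~\ref{figure1}(iv). For the third clause, the analogous reduction yields a three-armed star carrying at least three particles. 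To produce $\mathbb{F}_2$ inside each of these minimal braid groups I combine previously proved results: the bouquet of two cycles, wedge of cycle and segment, four-armed star, and three-armed star are all rose graphs, so their relevant braid groups are free by Lemma \ref{lem:rosefree}, and they are non-cyclic by Lemma \ref{lem:braidcyclic}, hence free of rank at least two; the H-graph is a tree, so its two-particle braid group is Gromov hyperbolic by Theorem \ref{thm:hypbraidgroup}, and being torsion-free and non-cyclic it must contain $\mathbb{F}_2$ by the standard classification of hyperbolic groups.

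The main obstacle is the combinatorial step in $(iii) \Rightarrow (i)$, namely verifying that every connected graph falling under one of the clauses of (iii) really does contain one of the enumerated minimal graphs as a \emph{connected induced} subgraph (so that Proposition \ref{prop:braidembed} applies cleanly). For the two-particle clause this splits by the first Betti number of $\Lambda$: trees with a vertex of degree at least four contain a four-armed star, trees with two or more branch vertices contain the H-graph, graphs with exactly one cycle plus extra edges contain a wedge of cycle and segment, and graphs with several cycles contain either a bouquet of two cycles (when two cycles meet at a vertex) or a wedge of cycle and segment (when two cycles are disjoint but joined by a path). The three-particle clause is easier: after subdivision via Proposition \ref{prop:deformretract}, any connected graph with a vertex of degree at least three contains a three-armed star as a connected induced subgraph.
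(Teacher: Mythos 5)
Your argument is correct, but it takes a genuinely different route from the paper for the substantive directions. The paper disposes of $(i) \Leftrightarrow (ii)$ abstractly: by Proposition \ref{prop:braidRAAG} the braid group embeds into a right-angled Artin group, and the Tits alternative for such groups forces any subgroup without $\mathbb{F}_2$ to be free abelian; it then gets $(ii) \Leftrightarrow (iii)$ from the product decomposition of Lemma \ref{lem:product} together with Theorem \ref{thm:braidgroupacyl} (each connected factor is free abelian iff it is cyclic, since otherwise it is acylindrically hyperbolic) and the characterisations of trivial and infinite cyclic factors in Lemmas \ref{lem:braidgroupnontrivial} and \ref{lem:braidcyclic}. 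You instead close the cycle $(i) \Rightarrow (ii) \Rightarrow (iii) \Rightarrow (i)$, which lets you avoid both the Tits alternative and the acylindricity theorem: your $(ii) \Rightarrow (iii)$ coincides with the paper's use of the two auxiliary lemmas, while your $(iii) \Rightarrow (i)$ manufactures explicit free subgroups by locating a minimal obstruction graph as a connected induced subgraph (after subdivision) and feeding it to Proposition \ref{prop:braidembed}, with freeness of rank at least two coming from Lemma \ref{lem:rosefree} plus non-cyclicity, or, for the H-graph, from hyperbolicity plus torsion-freeness (the latter should be cited as a consequence of Proposition \ref{prop:braidRAAG}; alternatively Remark \ref{remark:criterionfree} gives freeness directly). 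The paper's proof is shorter but leans on heavier imported machinery; yours is longer but exhibits the free subgroups concretely and is independent of Theorem \ref{thm:braidgroupacyl}. Two small points to tighten: your enumeration of the several-cycle case omits the configuration where two cycles share a non-trivial segment (e.g., a theta graph), though this is harmless because such a graph still contains a wedge of a cycle and a segment as an induced subgraph after subdivision; and one should say explicitly that sufficient subdivision (licensed by Proposition \ref{prop:deformretract}) turns each topological copy of a minimal obstruction into a genuinely \emph{induced} subgraph, so that Proposition \ref{prop:braidembed} applies.
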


\begin{proof}
The equivalence $(i) \Leftrightarrow (ii)$ is a consequence of Tits' alternative for right-angled Artin groups \cite{MR634562}, combined with Proposition \ref{prop:braidRAAG}. If $\Lambda_1, \ldots, \Lambda_r$ denote the connected components of $\Gamma$, we know from Lemma \ref{lem:product} that
$$B_n(\Gamma,S) \simeq B_{n_1}(\Lambda_1) \times \cdots \times B_{n_r}(\Lambda_r),$$
where $n_i = \# S \cap \Lambda_i$ for every $1 \leq i \leq r$. Therefore, the braid group $B_n(\Gamma,S)$ is free abelian if and only if all the $B_{n_i}(\Lambda_i)$'s are free abelian themselves, which occurs if and only if they are cyclic according to Theorem \ref{thm:braidgroupacyl}. The equivalence $(ii) \Leftrightarrow (iii)$ follows from Lemmas \ref{lem:braidgroupnontrivial} and \ref{lem:braidcyclic}. 
\end{proof}

\noindent
Now, we are ready to determine when a graph braid group contains a subgroup isomorphic to $\mathbb{F}_2 \times \mathbb{Z}$. Our theorem will be a direct consequence of the following proposition combined with Theorem \ref{thm:RHspecialAbelian}. 

\begin{prop}\label{prop:FxZinBraid}
Let $\Gamma$ be a connected compact one-dimensional CW-complex. 
\begin{itemize}
	\item The braid group $B_2(\Gamma)$ contains $\mathbb{F}_2 \times \mathbb{Z}$ if and only if $\Gamma$ contains an induced cycle which is disjoint from two other induced cycles.
	\item The braid group $B_3(\Gamma)$ contains $\mathbb{F}_2 \times \mathbb{Z}$ if and only if $\Gamma$ contains an induced cycle disjoint from two other induced cycles; or a vertex of degree at least four disjoint from an induced cycle; or a segment between two vertices of degree three which is disjoint from an induced cycle; or a vertex of degree three which is disjoint from two induced cycles; or two disjoint induced cycles one of those containing a vertex of degree three.
	\item The braid group $B_4(\Gamma)$ contains $\mathbb{F}_2 \times \mathbb{Z}$ if and only if $\Gamma$ is not a rose graph, nor a segment linking two vertices of degree three, nor a cycle containing two vertices of degree three, nor two cycles glued along a non-trivial segment.
	\item For every $n \geq 5$, the braid group $B_n(\Gamma)$ contains $\mathbb{F}_2 \times \mathbb{Z}$ if and only if $\Gamma$ is not a rose graph.
\end{itemize}
\end{prop}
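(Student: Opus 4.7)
The strategy mirrors the proof of Theorem~\ref{thm:hypbraidgroup}, replacing the search for a $\mathbb{Z}^2$ subgroup by one for $\mathbb{F}_2 \times \mathbb{Z}$. Concretely, if $\Lambda_1, \Lambda_2 \subset \Gamma$ are vertex-disjoint subgraphs and $S_1 \subset \Lambda_1$, $S_2 \subset \Lambda_2$ are configurations with $|S_1|+|S_2| \leq n$, then for any basepoint $S = S_1 \sqcup S_2 \sqcup P$ with $P$ disjoint from $\Lambda_1 \cup \Lambda_2$, motions of particles in $\Lambda_1$ are independent of motions in $\Lambda_2$, so the subgroup of $B_n(\Gamma,S)$ generated by $B_{|S_1|}(\Lambda_1,S_1)$ and $B_{|S_2|}(\Lambda_2,S_2)$ is their direct product. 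Thus to embed $\mathbb{F}_2 \times \mathbb{Z}$ into $B_n(\Gamma)$ it suffices to exhibit such $\Lambda_1, \Lambda_2, S_1, S_2$ with $B_{|S_1|}(\Lambda_1,S_1)$ containing $\mathbb{F}_2$ (detected by Lemma~\ref{lem:freeinbraid}) and $B_{|S_2|}(\Lambda_2,S_2)$ non-trivial (detected by Lemmas~\ref{lem:braidgroupnontrivial} and~\ref{lem:braidcyclic}). In the other direction, an argument analogous to Lemma~\ref{lemma:nothyp}, applied to the three generators of $\mathbb{F}_2 \times \mathbb{Z}$ expressed as spherical diagrams, produces two disjoint supports in $\Gamma$ realizing such a splitting; the two sides of the equivalence together with the preceding combinatorial lemmas reduce the proposition to a purely graph-theoretic statement on $\Gamma$.

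For the ``if'' direction of each bullet, I will locate $\Lambda_1, \Lambda_2$ and distribute the $n$ particles concretely. For $n=2$, an induced cycle $C_0$ disjoint from two other induced cycles $C_1, C_2$ gives $\Lambda_1 = C_1 \cup C_2$ with one particle (so $\pi_1(\Lambda_1)$ contains $\mathbb{F}_2$) and $\Lambda_2 = C_0$ with one particle (so $\pi_1(\Lambda_2) \cong \mathbb{Z}$). For $n=3$, the clause ``a vertex of degree at least four disjoint from an induced cycle $C$'' lets me take $\Lambda_1$ a star with four arms carrying two particles (whose braid group contains $\mathbb{F}_2$ by combining Lemma~\ref{lem:freeinbraid} and Lemma~\ref{lem:braidcyclic}) and $\Lambda_2 = C$ with one particle; each of the other four clauses is treated similarly. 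For $n=4$, every graph outside the four listed families either contains one of these substructures with room for a parked particle, or permits $\Lambda_1$ or $\Lambda_2$ to be augmented with an extra particle to trigger a richer clause of Lemma~\ref{lem:freeinbraid}.

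For the ``only if'' direction, I must verify that on each listed exceptional graph no valid splitting exists. For rose graphs and any $n \geq 2$ this is immediate: by Lemma~\ref{lem:rosefree} the braid group is free and contains no $\mathbb{F}_2 \times \mathbb{Z}$. For the three additional $n=4$ exceptions (segment linking two trivalent vertices, cycle through two trivalent vertices, two cycles glued along a non-trivial segment), a finite enumeration of pairs of disjoint induced subgraphs shows that whenever $\Lambda_1$ carries a non-abelian free braid subgroup on its allotted particles, the complement $\Lambda_2$ has too few edges or too few remaining particles to host a non-trivial braid subgroup. Analogous but smaller enumerations, guided by the forbidden-substructure list, handle the $n=2$ and $n=3$ bullets.

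The principal difficulty is the finite but intricate case analysis at $n=3$ and $n=4$: one must check both that each forbidden substructure really produces an $\mathbb{F}_2 \times \mathbb{Z}$ subgroup (via correct particle allocation) and that the list of exceptions is exhaustive---no subtler configuration is missed. The most economical route is to fix $\Lambda_1$ of minimal size realizing the $\mathbb{F}_2$-factor on a prescribed particle count, then classify the possible disjoint complements $\Lambda_2 \subset \Gamma \setminus \Lambda_1$ able to host a non-trivial braid subgroup on the remaining particles.
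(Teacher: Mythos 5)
Your overall architecture matches the paper's: reduce the statement to the existence of two disjoint subgraphs $\Lambda_1,\Lambda_2\subset\Gamma$ carrying, for some configuration, respectively a non-abelian free braid subgroup and a non-trivial braid subgroup, and then translate this into graph theory via Lemmas \ref{lem:braidgroupnontrivial}, \ref{lem:braidcyclic} and \ref{lem:freeinbraid}. The ``if'' direction of that reduction (disjoint supports commute, so the two subgroups generate their direct product) and the subsequent case analyses are in line with what the paper does; the $n=4$ exhaustiveness check is genuinely lengthy, but your plan for it is the right one.

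The gap is in the ``only if'' direction of the reduction. You assert that ``an argument analogous to Lemma \ref{lemma:nothyp}, applied to the three generators of $\mathbb{F}_2\times\mathbb{Z}$ expressed as spherical diagrams, produces two disjoint supports.'' This is precisely the step that requires a new idea, and no analogue of Lemma \ref{lemma:nothyp} hands it to you: if $a,b$ generate the $\mathbb{F}_2$ factor and $z$ the central $\mathbb{Z}$, the supports of $a$ and $b$ need not be disjoint from that of $z$ --- they merely commute with $z$ (e.g.\ $a$ could equal $z$ times something of disjoint support). The paper resolves this by passing to the right-angled Artin group $A(\Delta)$ (Proposition \ref{prop:braidRAAG}, Fact \ref{fact:diagvsRAAG}) and invoking Servatius' Centralizer Theorem: after cyclically reducing $z=h_1^{n_1}\cdots h_r^{n_r}$ with the $h_i$ of pairwise disjoint supports, one writes $a=h_1^{p_1}\cdots h_r^{p_r}h$ and $b=h_1^{q_1}\cdots h_r^{q_r}k$ with $\supp(h),\supp(k)$ disjoint from $\supp(z)$. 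Even then $h$ and $k$ are only legal words, not necessarily spherical diagrams based at $S$; one needs a pigeonhole argument on termini (the terminus takes finitely many values, so $h^i$ and $h^j$ share a terminus for some $i\neq j$ and $h^{i-j}$ lies in $B_n(\Gamma,S)$), and finally the observation that $\langle h^p,k^q\rangle$ is non-abelian free because $a^p$ and $b^q$ do not commute (using Tits' alternative for right-angled Artin groups). Without these steps --- or some substitute for them --- the reduction on which your entire case analysis rests is not established.
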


\begin{proof}
Let $n \geq 2$ be an integer. We claim that $B_n(\Gamma)$ contains a subgroup isomorphic to $\mathbb{F}_2 \times \mathbb{Z}$ if and only if $\Gamma$ contains two disjoint induced subgraphs $\Lambda_1,\Lambda_2 \subset \Gamma$ such that $B_n(\Lambda_1, S \cap \Lambda_1)$ contains a non-abelian free subgroup and such that $B_n(\Lambda_2,S \cap \Lambda_2)$ is non-trivial for some initial configuration $S \in UC_n(\Gamma)$. 

\medskip \noindent
Fix an initial configuration $S \in UC_n(\Gamma)$. Suppose first that $B_n(\Gamma,S)$ contains a subgroup isomorphic to $\mathbb{F}_2 \times \mathbb{Z}$. Let $a,b \in B_n(\Gamma,S)$ be two $S$-legal words of edges generating $\mathbb{F}_2$, and $z \in \mathbb{Z}$ an $S$-legal word of edges generating $\mathbb{Z}$. Up to conjugating $z$ (and changing the initial configuration), we may suppose that $z$ is cyclically reduced. As a consequence of Proposition \ref{prop:braidRAAG} and \cite[Centralizer Theorem]{ServatiusCent}, one can write $z$ as a reduced product $h_1^{n_1} \cdots h_r^{n_r}$ such that any edge of the word $h_i$ is disjoint from any edge of the word $h_j$ if $i \neq j$; and $a$ and $b$ respectively as $h_1^{p_1} \cdots h_r^{p_r}h$ and $h_1^{q_1} \cdots h_r^{q_r}k$ such that the edges of the words $h$ and $k$ are disjoint from the edges of the $h_i$'s. As a consequence of Lemma~\ref{lem:commutationlegal}, for every $j \geq 1$, one has
$$a^j = (h_1^{p_1} \cdots h_r^{p_r}h)^j = h^jh_1^{jp_1} \cdots h_r^{jp_r};$$
a fortiori, $h^j$ is an $S$-legal word. Because the terminus of a word may take only finitely values, there exists two distinct integers $i,j \geq 1$ such that $h^i$ and $h^j$ have the same terminus, so that $h^ih^{-j}=h^{i-j}$ must belong to $B_n(\Gamma,S)$. A similar argument works for $k$. Therefore, there exist two integers $p,q \geq 1$ such that $h^p$ and $k^q$ belong to $B_n(\Gamma,S)$. Let $\Lambda_1$ be the subgraph of $\Gamma$ generated by the edges of the words $h$ and $k$, and $\Lambda_2$ the subgraph generated by the edges of the word $z$. Notice that $\Lambda_1$ and $\Lambda_2$ are disjoint, and that $z \in B_n(\Lambda_2,S \cap \Lambda_2)$ and $h^p,k^q \in B_n(\Lambda_1, S \cap \Lambda_1)$. A fortiori, $B_n(\Lambda_2, S \cap \Lambda_2)$ is non-trivial, and $B_n(\Lambda_1,S \cap \Lambda_1)$ contains a non-abelian free subgroup. Indeed, because the powers $a^p$ and $b^q$ do not commute (since $a$ and $b$ generate a non-abelian free subgroup), necessarily $h^p$ and $k^q$ cannot commute, so that $\langle h^p,k^q \rangle$ must be a free group as a consequence of \cite{MR634562} (which applies thanks to Proposition \ref{prop:braidRAAG}). This is the desired conclusion. 

\medskip \noindent
The converse follows from Proposition \ref{prop:braidembed} and Lemma \ref{lem:product}, concluding the proof of our claim. 

\medskip \noindent
Consequently, $B_2(\Gamma)$ contains $\mathbb{F}_2 \times \mathbb{Z}$ if and only if there exist two disjoint (connected) subgraphs $\Lambda_1,\Lambda_2 \subset \Gamma$ such that $B_1(\Lambda_1)$ contains a non-abelian free subgroup and $B_1(\Lambda_2)$ is non-trivial. The conclusion follows from Lemmas \ref{lem:braidgroupnontrivial} and \ref{lem:freeinbraid}. 

\medskip \noindent
Next, $B_3(\Gamma,S)$ contains $\mathbb{F}_2 \times \mathbb{Z}$ if and only if if there exist two disjoint subgraphs $\Lambda_1,\Lambda_2 \subset \Gamma$ such that either $B_1(\Lambda_1,S \cap \Lambda_1)$ contains a non-abelian free subgroup and $B_2(\Lambda_2, S \cap \Lambda_2)$ is non-trivial, or $B_2(\Lambda_1,S \cap \Lambda_1)$ contains a non-abelian free subgroup and $B_1(\Lambda_2, S \cap \Lambda_2)$ is non-trivial. According to Lemmas \ref{lem:braidgroupnontrivial} and \ref{lem:freeinbraid}, the former situation is equivalent to: $\Gamma$ contains an induced cycle which is disjoint from two other induced cycles or it contains a vertex of degree at least three which is disjoint from two induced cycles; and the latter situation is equivalent to: $\Gamma$ contains an induced cycle which is disjoint from two other induced cycles or from a connected subgraph which not a segment, a cycle or a star with three arms. The desired conclusion follows from the next observation. 

\begin{fact}
A connected graph is distinct from a segment, a cycle and a star with three arms if and only if it contains a vertex of degree at least four, or if its contains two distinct vertices of degree three, or if it is a cycle containing a vertex of degree three. 
\end{fact}

\noindent
Let $\Lambda$ be a connected graph. If $\Lambda$ does not contain any vertex of degree at least three, then $\Lambda$ must be either a segment or a cycle. If $\Lambda$ contains at least two distinct vertices of degree at least three, we are done, so suppose that $\Lambda$ contains a unique vertex of degree at least three. If this vertex has degree at least four, we are done, so suppose that it has degree exactly three. Two cases may happen: either $\Lambda$ is a star with three arms, or $\Lambda$ is a cycle containing a vertex of degree three.

\medskip \noindent
Now, notice that $B_4(\Gamma)$ contains $\mathbb{F}_2 \times \mathbb{Z}$ if $\Gamma$ contains at least three vertices of degree three or if it contains it contains two distinct vertices respectively of degrees at least three and four. In the former case, $\Gamma$ contains two disjoint subgraphs $\Lambda_1$ and $\Lambda_2$ such that $\Lambda_1$ is a segment between two vertices of degree three and $\Lambda_2$ a star with three arms. By fixing an initial configuration $S \in UC_4(\Gamma)$ satisfying $\#S \cap \Lambda_1 \geq 2$ and $\# S \cap \Lambda_2 \geq 2$, it follows from Lemmas \ref{lem:braidgroupnontrivial} and \ref{lem:freeinbraid} that $B_4(\Lambda_1, S \cap \Lambda_1)$ contains a non-abelian free subgroup and that $B_4(\Lambda_2, S \cap \Lambda_2)$ is non-trivial. In the latter case, $\Gamma$ contains two disjoint subgraphs $\Lambda_1$ and $\Lambda_2$ such that $\Lambda_1$ is a star with at least three arms and $\Lambda_2$ a star with at least four arms. By fixing an initial configuration $S \in UC_4(\Gamma)$ satisfying $\#S \cap \Lambda_1 \geq 2$ and $\# S \cap \Lambda_2 \geq 2$, it follows from Lemmas \ref{lem:braidgroupnontrivial} and \ref{lem:freeinbraid} that $B_4(\Lambda_1, S \cap \Lambda_1)$ is non-trivial and that $B_4(\Lambda_2, S \cap \Lambda_2)$contains a non-abelian free subgroup. This proves our claim. From now on, suppose that $\Gamma$ contains at most two vertices of degree at least three, and that it does not contain two vertices respectively of degrees at least three and four. 

\medskip \noindent
If $\Gamma$ is a tree, only two cases may happen: $\Gamma$ is a star (and in particular a rose graph) or a segment between two vertices of degree three. If $\Gamma$ contains a unique cycle, only three cases may happen: $\Gamma$ is reduced to a cycle, or $\Gamma$ is a cycle which contains a vertex of degree at least three, or $\Gamma$ is a cycle with two vertices of degree three. Notice that, in the first two cases, $\Gamma$ is a rose graph. From now on, we suppose that $\Gamma$ contains at least two induced cycles. 

\medskip \noindent
Next, notice that, if $\Gamma$ contains a vertex of degree at least three which is disjoint from a cycle containing a vertex of degree at least three, then $B_4(\Gamma)$ contains $\mathbb{F}_2 \times \mathbb{Z}$ as a subgroup. Indeed, when it is the case, $\Gamma$ contains two disjoint subgraphs $\Lambda_1$ and $\Lambda_2$ where $\Lambda_1$ is an induced cycle with a vertex of degree three and $\Lambda_2$ a star with three arms. By fixing an initial configuration $S \in UC_4(\Gamma)$ satisfying $\#S \cap \Lambda_1 \geq 2$ and $\# S \cap \Lambda_2 \geq 2$, it follows from Lemmas \ref{lem:braidgroupnontrivial} and \ref{lem:freeinbraid} that $B_4(\Lambda_1, S \cap \Lambda_1)$ contains a non-abelian free subgroup and that $B_4(\Lambda_2, S \cap \Lambda_2)$ is non-trivial. The desired conclusion follows. From now on, we suppose that $\Gamma$ does not contain any vertex of degree at least three which is disjoint from a cycle containing a vertex of degree at least three.

\medskip \noindent
As a consequence, the induced cycles of $\Gamma$ must pairwise intersect: otherwise, $\Gamma$ would contain two disjoint induced cycles joined by a segment. Let $C_1$ and $C_2$ be two induced cycles of $\Gamma$. According to our previous observation, $C_1$ and $C_2$ intersect. We distinguish two cases. First, suppose that the intersection $C_1 \cap C_2$ is not reduced to a singel vertex. As a consequence of our assumptions, $C_1 \cap C_2$ must be a segment whose endpoints are two vertices of degree three. Because $\Gamma$ cannot contain any other vertex of degree at least three, it follows that $\Gamma=C_1 \cup C_2$, i.e., $\Gamma$ is a union of two induced cycles glued along a non-trivial segment. Next, suppose that $C_1 \cap C_2$ is reduced to a single vertex. A fortiori, $\Gamma$ contains a vertex of degree at least four, so that no other vertex can have degree at least three. It follows that $\Gamma$ must be a rose graph. 

\medskip \noindent
Thus, we have proved that, if $B_4(\Gamma)$ does not contain $\mathbb{F}_2 \times \mathbb{Z}$, then $\Gamma$ must be
\begin{itemize}
	\item a rose graph;
	\item or a segment linking two vertices of degree three;
	\item or a cycle containing two vertices of degree three;
	\item or two cycles glued along a non-trivial segment.
\end{itemize}
Conversely, we claim that if $\Gamma$ is one of these graphs then $B_4(\Gamma)$ does not contain $\mathbb{F}_2 \times \mathbb{Z}$. Let $S \in UC_4(\Gamma)$ be an initial configuration and $\Lambda_1,\Lambda_2 \subset \Gamma$ two subgraphs such that $B_4(\Lambda_1,S \cap \Lambda_1)$ is non-trivial. We distinguish two cases. First, suppose that $\# S \cap \Lambda_1=1$. Because $B_4(\Lambda_1,S \cap \Lambda_1)$ is non-trivial, $\Lambda_1$ must contain a cycle, so that $\Lambda_2$ must be included in the complement of a cycle. By looking at our different graphs, this implies that $\Lambda_2$ must be a disjoint union of segments, so that $B_4(\Lambda_2, S \cap \Lambda_2)$ has to be trivial. Next, suppose that $\# S \cap \Lambda_1 \geq 2$. If $\Lambda_1$ contains a cycle, the previous argument shows that $B_4(\Lambda_2, S \cap \Lambda_2)$ is trivial, so suppose that $\Lambda_1$ does not contain any cycle. Because $B_4(\Lambda_1, S \cap \Lambda_1)$ is non-trivial, necessarily $\Lambda_1$ must contain a vertex of degree at least three, so that $\Lambda_2$ is included in the complement of a vertex of degree at least three. By looking at our different graphs, we deduce that $\Lambda_2$ either is a disjoint union of segments, so that $B_4(\Lambda_2, S \cap \Lambda_2)$ is trivial, or is included in a star with three arms. In the latter case, $\Lambda_2$ is either a disjoint union of segments, such that $B_4(\Lambda_2,S \cap \Lambda_2)$ is trivial, or a disjoint union of segments with a star having three arms, so that $B_4(\Lambda_2, S \cap \Lambda_2)$ cannot contain a non-abelian free subgroup according to Lemma~\ref{lem:freeinbraid} since $\# S \cap \Lambda_2 \leq 4- \# S \cap \Lambda_1 \leq 2$. As a consequence of the equivalence proved at the beginning of this proof, it follows that $B_4(\Gamma)$ does not contain $\mathbb{F}_2 \times \mathbb{Z}$ as a subgroup. 

\medskip \noindent
Finally, let $n \geq 5$ be an integer. Suppose first that $\Gamma$ contains at least two distinct vertices of degree at least three. Then $\Gamma$ contains two disjoint subgraphs $\Lambda_1$ and $\Lambda_2$ isomorphic to stars with three arms. By fixing an initial configuration $S \in UC_n(\Gamma)$ satisfying $\#S \cap \Lambda_1 \geq 3$ and $\# S \cap \Lambda_2 \geq 2$, it follows from Lemmas \ref{lem:braidgroupnontrivial} and \ref{lem:freeinbraid} that $B_4(\Lambda_1, S \cap \Lambda_1)$ contains a non-abelian free subgroup and that $B_4(\Lambda_2, S \cap \Lambda_2)$ is non-trivial. Therefore, $B_n(\Gamma)$ contains $\mathbb{F}_2 \times \mathbb{Z}$ as a subgroup. Next, if $\Gamma$ contains at most one vertex of degree at least three, then it must be a rose graph. In this case, we know from Lemma \ref{lem:rosefree} that $B_n(\Gamma)$ is a free group so that $B_n(\Gamma)$ cannot contain $\mathbb{F}_2 \times \mathbb{Z}$ as a subgroup. 
\end{proof}

\begin{proof}[Proof of Theorem \ref{thm:braidRHabelian}.]
Our theorem follows directly from Proposition \ref{prop:FxZinBraid}, Theorem~\ref{thm:RHspecialAbelian} and Lemma \ref{lem:rosefree}. 
\end{proof}

\begin{ex}
The braid group $B_n(K_m)$ is hyperbolic relative to abelian subgroups if and only if $n=1$, or $n=2$ and $m \leq 7$, or $n=3$ and $m \leq 4$, or $m \leq 3$. The non hyperbolic groups in this family are $B_2(K_7)$ and $B_3(K_6)$. 
\end{ex}

\begin{ex}
The braid group $B_n(K_{p,q})$ is hyperbolic relative to abelian subgroups if and only if $n=1$, or $n=2$ and $p,q \leq 4$, or $n=3$ and $p,q \leq 3$, or $n=4$ and $p=2$ and $q \leq 3$, or $p,q \leq 2$. The non hyperbolic groups in this family are $B_2(K_{4,4})$, $B_3(K_{3,3})$ and $B_4(K_{2,3})$. 
\end{ex}

\noindent
Despite the existence of a general criterion determining whether the fundamental group of a compact special cube complex is relatively hyperbolic \cite{SpecialRH}, we were not able to describe when a graph braid group is relatively hyperbolic. This question remains open in full generality; see Question \ref{q:BraidRH}. Nevertheless, we are able to state and prove directly a sufficient criterion of relative hyperbolicity. Before stating our criterion, we need to introduce the following definition:

\begin{definition}
Let $\Gamma$ be a one-dimensional CW-complex, $n \geq 1$ an integer, $\Lambda \subset \Gamma$ a subgraph and $w \in \mathcal{D}(UC_n(\Gamma))$ a diagram. The \emph{coset} $w \langle \Lambda \rangle$ is the set of diagrams which can be written as a concatenation $w \cdot \ell$ where $\ell$ is a diagram represented by a legal word of oriented edges belonging to $\Lambda$. Similarly, for every diagrams $a,b \in \mathcal{D}(UC_n(\Gamma))$, we denote by $a \langle \Lambda \rangle b$ the set of diagrams which can be written as a concatenation $a \cdot \ell \cdot b$ where $\ell$ is a diagram represented by a legal word of oriented edges belonging to $\Lambda$. 
\end{definition}

\noindent
Our main criterion is the following:

\begin{thm}\label{thm:braidgrouprh}
Let $\Gamma$ be a connected compact one-dimensional CW-complex, and let $\mathcal{G}$ be a collection of subgraphs of $\Gamma$ satisfying the following conditions:
\begin{itemize}
	\item every pair of disjoint simple cycles of $\Gamma$ is contained in some $\Lambda \in \mathcal{G}$;
	\item for every distinct $\Lambda_1,\Lambda_2 \in \mathcal{G}$, the intersection $\Lambda_1 \cap \Lambda_2$ is either empty or a disjoint union of segments;
	\item if $\gamma$ is a reduced path between two vertices of some $\Lambda \in \mathcal{G}$ which is disjoint from some cycle of $\Lambda$, then $\gamma \subset \Lambda$.
\end{itemize}
Then $B_2(\Gamma)$ is hyperbolic relative to subgroups which are isomorphic to $w \langle \Lambda \rangle w^{-1}$ for some subgraph $\Lambda \in \mathcal{G}$ and some diagram $w$ satisfying $t(w) \subset \Lambda$. In particular, $B_2(\Gamma)$ is relatively hyperbolic if $\mathcal{G}$ is a collection of proper subgraphs. 
\end{thm}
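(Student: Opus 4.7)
The plan is to realize the candidate peripheral subgroups as stabilizers of a family of convex subcomplexes of the universal cover $\widetilde{X}$ of $UC_2(\Gamma,S)$, and then apply the relative hyperbolicity criterion for compact special cube complexes from \cite{SpecialRH}. Concretely, fix an initial configuration $S \in UC_2(\Gamma)$ and, for each $\Lambda \in \mathcal{G}$ and each diagram $w$ with $t(w) \subset \Lambda$, I would define $Y_{\Lambda,w}$ to be the subcomplex of $\widetilde{X}$ spanned by the vertices $w\cdot \ell$, where $\ell$ ranges over diagrams represented by $t(w)$-legal words of oriented edges of $\Lambda$. Via the identification $M(X,S) \leftrightarrow \widetilde{X}^{(1)}$ of Section \ref{section:formalism}, this $Y_{\Lambda,w}$ is nothing else than the natural lift of (the image of) $UC_2(\Lambda,t(w))$.

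The first step is to prove that each $Y_{\Lambda,w}$ is a convex subcomplex of $\widetilde{X}$. Since $UC_2(\Gamma)$ is two-dimensional, convexity reduces to showing that no geodesic can shortcut around $Y_{\Lambda,w}$. Such a shortcut would project to a reduced path in $\Gamma$ going from a vertex of $\Lambda$ to another vertex of $\Lambda$ through $\Gamma \setminus \Lambda$, while a second particle sits on a vertex of $\Lambda$ belonging to a cycle disjoint from that path; condition (3) rules this out. Once convexity is established, it is then immediate from the definitions that the stabiliser of $Y_{\Lambda,w}$ under the action of $B_2(\Gamma,S) = \mathcal{D}(X,S)$ is exactly $w\langle \Lambda \rangle w^{-1}$.

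The second step is to verify the hypotheses of the relative hyperbolicity criterion of \cite{SpecialRH}, which in practice require that every flat of $\widetilde{X}$ is contained in some $Y_{\Lambda,w}$ and that the family $\{Y_{\Lambda,w}\}$ has essentially trivial pairwise intersections. For the former, I would exploit the fact, already used in the proof of Theorem \ref{thm:hypbraidgroup}, that the $\mathbb{Z}^2$-subgroups of $B_2(\Gamma)$ all arise from pairs of disjoint induced cycles of $\Gamma$; condition (1) guarantees that any such pair is contained in some $\Lambda \in \mathcal{G}$, hence the associated flat in $\widetilde{X}$ lies inside a translate of some $Y_{\Lambda,w}$. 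For the latter, condition (2) forces $\Lambda_1 \cap \Lambda_2$ to be a disjoint union of segments, whose 2-particle braid groups are trivial by Lemma \ref{lem:braidgroupnontrivial}, so that intersections of distinct $Y_{\Lambda,w}$'s are combinatorially controlled.

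The main obstacle will be to verify the hyperbolicity of the coned-off space obtained by collapsing each $Y_{\Lambda,w}$ in $\widetilde{X}$. This is where I expect to invest the bulk of the work: one needs to adapt the arguments of \cite{SpecialRH} in the style of Bowditch or Drutu--Sapir to the present two-dimensional setting, showing that the associated contact-type graph is $\delta$-hyperbolic and that the action satisfies the bounded coset penetration property. The hypothesis that $\mathcal{G}$ consists of \emph{proper} subgraphs is finally invoked to ensure that the peripheral family is a proper collection, so that $B_2(\Gamma)$ is genuinely (and not trivially) hyperbolic relative to the subgroups $w\langle \Lambda \rangle w^{-1}$.
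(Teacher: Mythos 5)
Your outline identifies the right peripheral structure (the cosets $w\langle\Lambda\rangle$ viewed as convex subcomplexes of the universal cover, which is exactly the collection $\mathcal{C}$ the paper works with) and the right general strategy (cone off these subcomplexes and verify hyperbolicity and fineness of the resulting graph). However, the proposal has a genuine gap at its core: you explicitly defer the verification that the coned-off space is hyperbolic (``this is where I expect to invest the bulk of the work''), and the substitute argument you do give for controlling flats is insufficient. You propose to handle flats by observing that every $\mathbb{Z}^2$-subgroup of $B_2(\Gamma)$ comes from a pair of disjoint cycles, which condition (1) places inside some $\Lambda\in\mathcal{G}$. But the relative hyperbolicity criteria for cube complexes (and in particular the cone-off theorem the paper invokes from \cite{coningoff}) require controlling \emph{all sufficiently thick flat rectangles}, not merely the periodic flats stabilised by $\mathbb{Z}^2$-subgroups. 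Ruling out $\mathbb{Z}^2$ outside the peripherals is enough to characterise hyperbolicity (as in Theorem \ref{thm:hypbraidgroup}), but it does not by itself give relative hyperbolicity. The paper's Claim \ref{claim:rh2} does the real work here: given an arbitrary $2L$-thick flat rectangle, it uses a pigeonhole argument on termini to extract two simple cycles from the word labelling a row and two more from a column, and then uses \emph{all three} conditions on $\mathcal{G}$ (including condition (3) on reduced paths disjoint from cycles) to force the whole rectangle into a single coset. Nothing in your proposal performs this step.

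The second omission is the bounded-intersection/fineness verification. You assert that condition (2) makes the pairwise intersections ``combinatorially controlled,'' but the actual statement needed (the paper's Claim \ref{claim:rh1}) is that two distinct cosets are crossed by at most $L=\#V(\Gamma)(\#V(\Gamma)-1)/2$ common hyperplanes; its proof again requires producing a flat rectangle between the two cosets, finding a non-trivial spherical diagram supported in $\Lambda_1\cap\Lambda_2$ by pigeonhole (hence a cycle in the intersection, contradicting condition (2)), and then using condition (3) to conclude that the two cosets coincide. Your appeal to ``Bowditch or Dru\c{t}u--Sapir style'' bounded coset penetration is a placeholder rather than an argument. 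In short: the skeleton matches the paper, but the two quantitative claims that constitute essentially the entire proof are missing, and the one concrete substitute you offer (flats via $\mathbb{Z}^2$-subgroups) would not suffice even if completed.
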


\begin{proof}
Set $\mathcal{C}= \{ w \langle \Lambda \rangle \mid \Lambda \in \mathcal{G}, \ t(w) \subset \Lambda \}$. We think of $\mathcal{C}$ as a collection of convex subcomplexes of the CAT(0) cube complex $X_n(\Gamma)$. For convenience, set $L = \# V(\Gamma)(\#V(\Gamma)-1)/2$ which is also the number of vertices of $UC_2(\Gamma)$. Recall that a \emph{flat rectangle} in $X_n(\Gamma)$ is an isometric combinatorial embedding $[0,p] \times [0,q] \hookrightarrow X_n(\Gamma)$; it is \emph{$K$-thick} for some $K \geq 0$ if $p,q > K$. 

\begin{claim}\label{claim:rh1}
For every distinct $C_1,C_2 \in \mathcal{C}$, there exist at most $L$ hyperplanes intersecting both $C_1$ and $C_2$.
\end{claim}

\noindent
Let $w_1 \langle \Lambda_1 \rangle, w_2 \langle \Lambda_2 \rangle \in \mathcal{C}$ be two cosets both intersected by at least $L+1$ hyperplanes. As a consequence of \cite[Corollary 2.17]{coningoff}, there exists a flat rectangle $[0,p] \times [0,q] \hookrightarrow X_n(\Gamma)$ such that $\{0\} \times [0,q] \subset w_1 \langle \Lambda_1 \rangle$, $\{ p \} \times [0,q] \subset w_2 \langle \Lambda_2 \rangle$ and $q \geq L+1$. Let $\Lambda_0$ be the union of the edges of $\Gamma$ labelling the hyperplanes intersecting the path $\{0\} \times [0,q]$. Since the edges of $w_1 \langle \Lambda_1 \rangle$ (resp. $w_2 \langle \Lambda_2 \rangle$) are all labelled by edges of $\Lambda_1$ (resp. $\Lambda_2$), it follows that $\Lambda_0 \subset \Lambda_1 \cap \Lambda_2$. Reading the word labelling the path $\{0\} \times [0,q]$, we deduce that there exists a reduced diagram of length at least $L+1$ in $\langle \Lambda_0 \rangle$; a fortiori, by looking at the prefixes to the previous word, we know that there exists an initial configuration $S \in UC_n(\Gamma)$ (which is the terminus of $(0,0)$) for which there exist at least $L+1$ distinct $(S, \ast)$-diagrams in $\langle \Lambda \rangle$. Necessarily, at least two such diagrams, say $A$ and $B$, must have the same terminus since there are at most $L$ possible terminus. Then $AB^{-1}$ provides a non-trivial spherical diagram in $\langle \Lambda \rangle$, showing that $B_2(\Lambda_0,S)$ is non-trivial, and finally that $\Lambda_0$ is not a union of segments. This implies that $\Lambda_1$ and $\Lambda_2$ must be equal; for convenience, let $\Lambda$ denote this common subgraph of $\Gamma$.

\medskip \noindent
Let $m = m_1 \cdots m_p$ denote the word labelling the path $[0,p] \times \{ 0 \}$. We claim that $m_1 \cup \cdots \cup m_p$ defines a path in $\Gamma$; in other words, thought of as a motion of two particles starting from the initial configuration $S$ which is the terminus of $(0,0)$, $m$ fixes a particle. Indeed, if $e$ denotes the edge of $\Gamma$ labelling $\{0\} \times [0,1]$ (which exists since $q \geq 1$), then the $m_i$'s are disjoint from $e$ since the hyperplane dual to $e$ is transverse to any hyperplane intersecting $[0,p] \times \{ 0\}$, so that $m$ must fixes the starting point of $e$. This proves that $m$ is a path, and even a reduced path since $[0,p] \times \{ 0 \}$ is a geodesic in $X_n(\Gamma)$. In fact, our argument implies the following more general statement:

\begin{fact}\label{fact:pathfromword}
Let $[0,p] \times [0,q] \hookrightarrow X_2(\Gamma)$ be a flat rectangle. For every $0 \leq k \leq q$, the word of edges of $\Gamma$ labelling the path $[0,p] \times \{ k \}$ defines a regular path in $\Gamma$ when thought of as a sequence of edges of $\Gamma$. 
\end{fact}

\noindent
As a consequence, if $p \geq 1$, then the word of edges of $\Lambda_0$ labelling $\{0\} \times [0,q]$ defines a reduced path in $\Gamma$. But this path has length $q \geq L+1$ in a subgraph $\Lambda_0$ containing at most $L$ vertices. Consequently, $\Lambda_0$ must contain an induced cycle. 

\medskip \noindent
Let $w_1 \ell_1$ and $w_2 \ell_2$ denote the vertices $(0,0)$ and $(p,0)$ respectively, where $\ell_1 \in \langle \Lambda_1 \rangle$ and $\ell_2 \in \langle \Lambda_2 \rangle$. Notice that the starting point of $m$ belongs to the terminus of $w_1 \ell_1$, which is included in $\Lambda$; and that the ending point of $m$ belongs to the terminus of $w_1 \ell_1 p = w_2 \ell_2$, which is included in $\Lambda$. Therefore, $m$ is a reduced path in $\Gamma$ between two vertices of $\Lambda$, which is disjoint from $\Lambda_0$ since any hyperplane intersecting $[0,p] \times \{ 0 \}$ is transverse to any hyperplane intersecting $\{0 \} \times [0,q]$. 

\medskip \noindent
Two cases may happen, either $p \geq 1$, so that $\Lambda_0 \subset \Lambda$ contains an induced cycle disjoint from $p$; or $p=0$, so that $w_1 \langle \Lambda \rangle$ and $w_2 \langle \Lambda \rangle$ intersect. In the former case, it follows from our assumptions that $m_1, \ldots, m_p \in \Lambda$, so that $[0,p] \times [0,q] \subset w_1 \langle \Lambda \rangle$. A fortiori, $w_1 \langle \Lambda \rangle$ and $w_2 \langle \Lambda \rangle$ must intersect as well. Since two distinct cosets associated to the same subgraph of $\Gamma$ must be disjoint, we deduce that $w_1 \langle \Lambda \rangle = w_2 \langle \Lambda \rangle$, concluding the proof of our claim.

\begin{claim}\label{claim:rh2}
Every $2L$-thick flat rectangle of $X_2(\Gamma)$ is contained in the $2L$-neighborhood of some element of $\mathcal{C}$.  
\end{claim}

\noindent
Let $[0,p] \times [0,q] \hookrightarrow X_2(\Gamma)$ be an $2L$-thick flat rectangle. Let $0 \leq a<b \leq L+1$ and $p-L-1 \leq c < d \leq p$ be four integers such that $(a,0)$ and $(b,0)$ (resp. $(c,0)$ and $(d,0)$) have the same terminus, and such that two distinct vertices of $(a,b) \times \{0\}$ (resp. $(c,d) \times \{ 0\}$) do not have the same terminus. Similarly, let $0 \leq e < f \leq L+1$ and $q-L-1 \leq g<h \leq q$ be four integers such that $(0,e)$ and $(0,f)$ (resp. $(0,g)$ and $(0,h)$) have the same terminus, and such that two distinct vertices of $\{0\} \times (e,f)$ (resp. $\{0 \} \times (g,h)$) do not have the same terminus. In order to deduce our claim, it is sufficient to show that there exists some $C \in \mathcal{C}$ containing $[a,d] \times [e,h]$. 

\medskip \noindent
Let $y$ be the word of edges of $\Gamma$ labelling the path $[a,d] \times \{ e \}$. According to Fact \ref{fact:pathfromword}, thought of as a sequence of edges of $\Gamma$, $y$ defines a reduced path in $\Gamma$; let $\alpha$ denote this path. Notice that, for every $1 \leq x<y \leq p$, the vertices $(x,e)$ and $(y,e)$ have the same terminus if and only if $(x,0)$ and $(y,0)$ have the same terminus themselves since $(x,e)=(x,0) \cdot E$ and $(y,e)=(y,0) \cdot E$ where $E$ denotes the diagram labelling the path $\{0 \} \times [0,e]$ (which also labels $\{x \} \times [0,e]$ and $\{y \} \times [0,e]$). A fortiori, $(a,e)$ and $(b,e)$ (resp. $(c,e)$ and $(d,e)$) have the same terminus, and two distinct vertices of $(a,b) \times \{e\}$ (resp. $(c,d) \times \{ e\}$) do not have the same terminus. Therefore the subpaths of $\alpha$ corresponding to $[a,b] \times \{e \}$ and $[c,d] \times \{ e \}$ are simple cycles of $\Gamma$. In other words, if $\Lambda(y) \subset \Gamma)$ denotes the union of the edges of $\alpha$, then $\Lambda(y)$ decomposes as two simple cycles linked by a reduced path. Similarly, if $z$ denotes the word of edges of $\Gamma$ labelling the path $\{ a \} \times [e,h]$, then, thought of as a sequence of edges of $\Gamma$, it defines a reduced path $\beta$ in $\Gamma$ such that, if $\Lambda(z)$ denotes the union of the edges of $\beta$, then $\Lambda(z) \subset \Gamma$ decomposes as two simple cycles linked by a reduced path. Notice that $[a,d] \times [e,h] \subset w \langle \Lambda(y) \cup \Lambda(z) \rangle$, where $w=(a,e)$; and that $\Lambda(y) \cap \Lambda(z)= \emptyset$, since any hyperplane intersecting $[a,d] \times \{e \}$ must be transverse to any hyperplane intersecting $\{a\} \times [e,h]$. 

\medskip \noindent
Let convenience, let $A,B$ (resp. $C,D$) denote our two simple cycles in $\Lambda(y)$ (resp. in $\Lambda(z)$). By assumption, there exist some $\Lambda_1,\Lambda_2,\Lambda_3$ such that $A \cup C \subset \Lambda_1$, $A \cup D \subset \Lambda_2$ and $B \cup D \subset \Lambda_3$. By noticing that the intersections $\Lambda_1 \cap \Lambda_2$ and $\Lambda_2 \cap \Lambda_3$ are not unions of segments (since they contain cycles), it follows that $\Lambda_1= \Lambda_2= \Lambda_2$; let $\Lambda$ denote this common subgraph. So $\Lambda$ belongs to $\mathcal{G}$ and contains $A \cup B \cup C \cup D$. Moreover, the reduced path between $A$ and $B$ in $\Lambda(y)$ links two vertices of $\Lambda$ and is disjoint from $D$ and $C$ (which are included in $\Lambda(z)$), so this path must be included in $\Lambda$. Similarly, the path between $C$ and $D$ in $\Lambda(y)$ must be included in $\Lambda$. A fortiori, $\Lambda(y)$ and $\Lambda(z)$ are both contained in $\Lambda$. Consequently,
$$[a,d] \times [e,h] \subset w \langle \Lambda(y) \cup \Lambda(z) \rangle \subset w \langle \Lambda \rangle \in \mathcal{C},$$
which concludes the proof of our claim.

\begin{claim}\label{claim:rh3}
An edge of $X_2(\Gamma)$ belongs to only finitely many subcomplexes of $\mathcal{C}$.
\end{claim}

\noindent
If $w_1 \langle \Lambda_1 \rangle, w_2 \langle \Lambda_2 \rangle \in \mathcal{C}$ are distinct and both contain a given edge of $X_2(\Gamma)$, necessarily $\Lambda_1 \neq \Lambda_2$. Therefore, at most $\# \mathcal{G} \leq 2^{\# V(\Gamma)}$ subcomplexes of $\mathcal{C}$ contain a given edge of $X_2(\Gamma)$, proving our claim. 

\medskip \noindent
Now, we are ready to conclude the proof of our theorem. Let $Y$ denote the cone-off of $X_2(\Gamma)$ over $\mathcal{C}$. More precisely, $Y$ is the graph obtained from the one-skeleton of $X_2(\Gamma)$ by adding a vertex for each $C\in \mathcal{C}$ and by adding an edge between this vertex and any vertex of $C$. According to \cite[Theorem 4.1]{coningoff}, it follows from Claim \ref{claim:rh2} that $Y$ is hyperbolic; and according to \cite[Theorem 5.7]{coningoff}, it follows from Claims \ref{claim:rh1} and~\ref{claim:rh3} that $Y$ is fine. In only remaining point to show in order to deduce that $B_2(\Gamma)$ is hyperbolic relative to 
$$\{ w \langle \Lambda \rangle w^{-1} \mid \Lambda \in \mathcal{G}, \ t(w) \subset \Lambda \}$$
is that $Y$ contains finitely many orbits of edges. In fact, it is sufficient to prove that, for every $\Lambda \in \mathcal{G}$ and every diagram $w$, the subgroup $w \langle \Lambda \rangle w^{-1}$ acts cocompactly on $w \langle \Lambda \rangle$. For that purpose, just notice that if $w \ell_1, w \ell_2 \in w \langle \Lambda \rangle$ have the same terminus then $w \ell_1 \ell_2^{-1} w^{-1}$ is an element of $w \langle \Lambda \rangle w^{-1}$ sending $w \ell_2$ to $w \ell_1$. 
\end{proof}

\begin{ex}
Let $\Gamma$ be a union of two bouquets of circles (each containing at least one circle) whose centers are linked by a segment (which is not reduced to a single vertex). Notice that $\Gamma$ contains a pair of disjoint induced cycles, so that $B_2(\Gamma)$ is not hyperbolic according to Theorem \ref{thm:hypbraidgroup}. Nevertheless, if $\Lambda \subset \Gamma$ denotes the union of the two bouquets of circles of $\Gamma$, then Theorem \ref{thm:braidgrouprh} applies to $\mathcal{G}= \{ \Lambda \}$, so that $B_2(\Gamma)$ is (non-trivially) hyperbolic relative to subgroups which are isomorphic to $B_2(\Lambda,S)$ for some configuration $S \in UC_2(\Lambda)$. Notice that $B_2(\Lambda_2,S)$ is isomorphic to the product of free groups $\mathbb{F}_r \times \mathbb{F}_s$ if $S$ contains a particle in each connected component of $\Lambda$, where $r$ and $s$ denotes the number of circles contained in each bouquets; and $B_2(\Lambda,S)$ is free if $S$ is contained in a single connected component of $\Lambda$ (as a consequence of Remark~\ref{remark:criterionfree}). Consequently, the braid group $B_2(\Gamma)$ is hyperbolic relative to a finite collection of groups isomorphic to $\mathbb{F}_r \times \mathbb{F}_s$. 
\end{ex}

\begin{ex}
Let $\Gamma= K_2^{\mathrm{opp}} \ast C_4$ be the graph obtained from a square by adding two new vertices and by linking them by an edge to any vertex of the square. Let $\mathcal{G}$ denote the collection of pairs of disjoint triangles of $\Gamma$. Then Theorem \ref{thm:braidgrouprh} applies, showing that $B_2(\Gamma)$ is hyperbolic relative to subgroups which are isomorphic to $B_2(\Lambda,S)$ for some $UC_2(\Lambda)$, where $\Lambda$ is a disjoint union of two triangles. Notice that either $S$ is contained in a single triangle of $\Lambda$, so that $B_2(\Lambda,S) \simeq \mathbb{Z}$; or $S$ intersects the two connected components of $\Lambda$, so that $B_2(\Lambda,S) \simeq \mathbb{Z}^2$. Consequently, the braid group $B_2(\Gamma)$ is hyperbolic relative to free abelian subgroups of rank two.
\end{ex}

\begin{ex}
Consider the complete graph on six vertices $K_6$. Let $\mathcal{G}$ denote the collection of pairs of disjoint triangles of $K_6$. Then Theorem \ref{thm:braidgrouprh} applies, showing, as in the previous examples, that the braid group $B_2(K_6)$ is hyperbolic relative to free abelian subgroups of rank two.

\medskip \noindent 
It is worth noticing that Theorem \ref{thm:braidgrouprh} does not apply to $B_2(K_n)$ for $n \geq 7$. We do not know whether or not these groups are relatively hyperbolic.
\end{ex}

\begin{ex}
Consider the bipartite complete graph $K_{4,4}$. Let $\mathcal{G}$ denote the collection of pairs of disjoint squares of $K_{4,4}$. Then Theorem \ref{thm:braidgrouprh} applies, showing that the braid group $B_2(K_{4,4})$ is hyperbolic relative to free abelian subgroups of rank two.

\medskip \noindent
It is worth noticing that Theorem \ref{thm:braidgrouprh} does not apply to $B_2(K_{m,n})$ for $m \geq 5$ or $n \geq 5$. We do not know whether or not these groups are relatively hyperbolic.
\end{ex}

\section{Open questions}\label{section:questions}

\noindent
We conclude this paper by setting several open questions. The first one is naturally inspired by Proposition \ref{prop:braidembed}.

\begin{question}\label{q:embedding}
Let $\Gamma$ be a connected one-dimensional CW-complex and $n \geq 2$ an integer. Is it true that $B_m(\Gamma)$ embeds into $B_n(\Gamma)$ for every $m \leq n$?
\end{question}

\noindent
Theorem \ref{thm:hypbraidgroup} determines which graph braid groups are hyperbolic. Conversely, an interesting question is: 

\begin{problem}\label{problem:hypbraidgroups}
Which hyperbolic groups arise as graph braid groups?
\end{problem}

\noindent
We know from Theorem \ref{thm:hypbraidgroup} that, when $n \geq 4$, there are only free groups. When $n=3$, it remains only two cases to study:

\begin{question}
Are the graph braid groups of sun graphs and pulsar graphs free? one-ended? surface groups? 3-manifold groups?
\end{question}

\noindent
For $n =2$, the situation is less clear. Notice that not all graph braid groups on two particles are free since we gave one-ended hyperbolic graph braid groups in Examples~\ref{ex1:GraphHyp} and \ref{ex2:GraphHyp}. Figure \ref{figure4} gives examples of graphs which do not satisfy the criterion given in Remark \ref{remark:criterionfree}; we do not know whether or not the corresponding braid groups are free. Nevertheless, many hyperbolic graph braid groups turn out to be free, stressing out the following question:
\begin{figure}
\begin{center}
\includegraphics[trim={0 20.5cm 19cm 0},clip,scale=0.44]{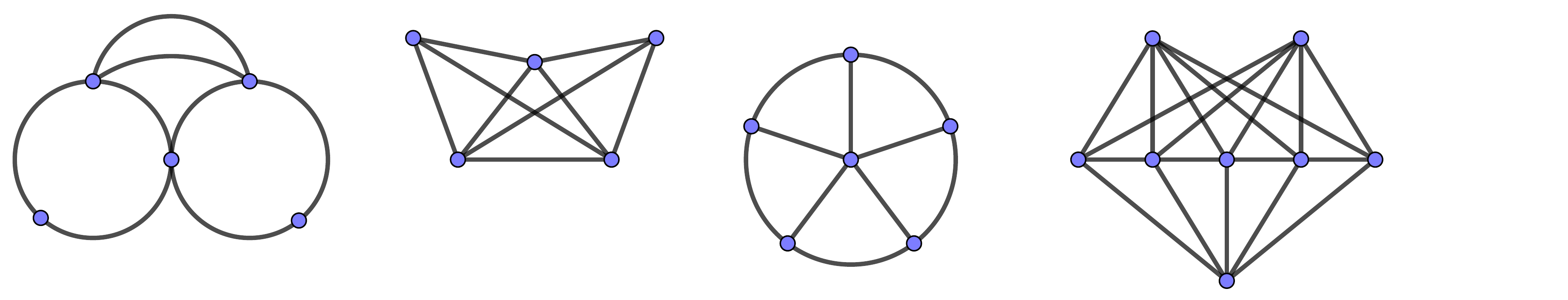}
\caption{If $\Gamma$ is one of these graphs, is $B_2(\Gamma)$ free?}
\label{figure4}
\end{center}
\end{figure}

\begin{question}
When is a graph braid group free?
\end{question}

\noindent
It is worth noticing that, if $\Gamma$ is a connected non-planar graph, then the braid group $B_2(\Gamma)$ is not free, since we know from \cite{KimKoPark} that its abelianisation contains finite-order elements. Consequently, we know that there exist many graphs leading to non-free hyperbolic braid groups. A possibly easier question, but interesting as well, is:

\begin{question}
When is a graph braid group one-ended?
\end{question}

\noindent
As a consequence of Stallings' theorem, an answer to this question would provide a characterisation of graph braid groups splitting as free products. 

\medskip \noindent
It is worth noticing that, in the third point of Theorem \ref{thm:braidRHabelian}, there are only finitely many graphs $\Gamma$ whose braid groups on four particles could not be free. Therefore, it would interesting to determine these groups.

\begin{question}
What is the braid group $B_4(\Gamma)$ if $\Gamma$ is a segment linking two vertices of degree three, or a cycle containing two vertices of degree three, or two cycles glued along a non-trivial segment? Are these groups free products?
\end{question}

\noindent
Finally, as mentioned in the previous section, we were not able to determine precisely when a graph braid group is relatively hyperbolic. So the question remains open in full generality.

\begin{question}\label{q:BraidRH}
When is a graph braid group (on two particles) relatively hyperbolic?
\end{question}

\noindent
For instance, a natural question to ask is: does the converse of Theorem \ref{thm:braidgrouprh} hold?

\addcontentsline{toc}{section}{References}

\bibliographystyle{alpha}
{\footnotesize\bibliography{SpecialAndGraphBraidGroup}}

\end{document}